\documentclass[]{article}
\usepackage{amsmath,amsthm,amssymb,amsfonts,amscd} 
\usepackage{color}
\usepackage{comment}
\usepackage{paralist}
\usepackage{mathrsfs}
\usepackage{hyperref} 
\usepackage{setspace}
\usepackage{xcolor}
\usepackage{titlesec}

\setlength{\topmargin}{0in} \setlength{\headheight}{0in}
\setlength{\headsep}{0in} \setlength{\textheight}{9in}
\parskip=0.0in
\textwidth=6.5in \oddsidemargin=0truecm \evensidemargin=0truecm
 
\titleformat{\subsection}{\normalfont\scshape\filcenter}{\thesubsection}{1em}{}
\titleformat{\subsubsection}[runin]{\normalfont\bfseries}{\thesubsubsection. }{.2em}{}[.]

\newcommand\thefontsize[1]{{#1 The current font size is: \f@size pt\par}}

\newtheorem{lem}{Lemma}[section]

\newtheorem{remark}{Remark}[section]
\newtheorem{cor}[lem]{Corollary}

\newtheorem{THM}{Theorem}

\newtheorem{prop}[lem]{Proposition}

\newcommand{\R}{{\mathbb R}}
\newcommand{\N}{{\mathbb{N}}}



\newcommand{\mpp}{\mathcal{P}}

\newcommand{\p}{\partial}

\DeclareMathOperator{\supp}{supp}

\DeclareMathOperator{\Lip}{Lip}

\numberwithin{equation}{section}

\title{Finiteness Principles for Smooth Convex Functions}
\author{Marjorie K. Drake\thanks{This material is based upon work supported by the National Science Foundation under Award
No. 2103209.}}
\date{}

\begin{document}

\maketitle

\begin{abstract}
Let $E \subset \R^n$ be a compact set, and $f:E \to \R$. How can we tell if there exists a convex extension $F \in C^{1,1}(\R^n)$ of $f$, i.e. satisfying $F|_E = f|_E$? Assuming such an extension exists, how small can one take the Lipschitz constant $\Lip(\nabla F): = \sup_{x,y \in \R^n, x \neq y} \frac{|\nabla F(x) - \nabla F(y)|}{|x-y|}$? We provide an answer to these questions for the class of strongly convex functions by proving that there exist constants $k^\# \in \N$ and $C>0$ depending only on the dimension $n$, such that if for every subset $S \subset E$, $\#S \leq k^\#$, there exists an $\eta$-strongly convex function $F^S \in C^{1,1}(\R^n)$ satisfying $F^S|_S=f|_S$ and $\Lip(\nabla F^S) \leq M$, then there exists an ${\frac{\eta}{C}}$-strongly convex function $F \in C^{1,1}_c(\R^n)$ satisfying $F|_E = f|_E$, and $\Lip(\nabla F) \leq C M^2/\eta$. Further, we prove a Finiteness Principle for the space of convex functions in $C^{1,1}(\R)$ and that the sharp finiteness constant for this space is $k^\#=5$. 
\end{abstract}

\section{Introduction}

Let $C_c^{1,1}(\R^n)$ be the space of convex, differentiable functions with Lipschitz continuous gradient. We say that a function $F:\R^n\to\R$ is {\em $\eta$-strongly convex}, for $\eta \geq 0$, if the function $F(x)-\frac{\eta}{2}|x|^2$ is convex. 
Let $E \subset \R^n$ be compact and $f:E \to \R$. In this paper, we provide an answer to the following questions: Under what conditions on $\eta$ and $f$ does there exists an $\eta$-strongly convex function $F \in C_c^{1,1}(\R^n)$ that is an extension of $f$, i.e., satisfying $F|_E=f|_E$? Assuming such an extension exists, how small can one take the Lipschitz constant $\Lip(\nabla F)$ for an $\eta$-strongly convex extension $F$ of $f$? Recall the Lipschitz constant of a function $G:\R^n \to \R^d$ is defined as $\Lip(G): = \sup_{x,y \in \R^n, x \neq y} \frac{|G(x) - G(y)|}{|x-y|}$. We prove the following result:

\begin{THM} \label{scthm}
    Let $E \subset \R^n$ be compact, the constants $\eta, M$ satisfy $M>\eta>0$, and the function $f:E \to \R$. There exist ${k^\#} \in \N$ and $C>0$ depending only on the dimension $n$ such that the following holds: Suppose that for all $S \subset E$ satisfying $\#S \leq {k^\#}$, there exists an $\eta$-strongly convex function $F^S \in C^{1,1}_c(\R^n)$ satisfying $F^S|_S=f|_S$ and $\Lip(\nabla F^S) \leq M$. Then for any $p,q \in (1, \infty)$ satisfying $\frac{1}{p}+\frac{1}{q}=1$, there exists an ${\eta /p^2}$-strongly convex function $F \in C^{1,1}_c(\R^n)$ satisfying $F|_E = f|_E$, and $\Lip(\nabla F) \leq C_1 q^2M^2/\eta$.
\end{THM}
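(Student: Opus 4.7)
The plan is to reduce the $\eta$-strongly convex $C^{1,1}$ extension problem to an ordinary (non-strongly) convex $C^{1,1}$ extension problem by a simple algebraic translation, and then to invoke a Finiteness Principle for convex $C^{1,1}$ extensions (which will be established elsewhere in the paper).

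First, the reduction. Introduce a ``slack parameter'' $\alpha := \eta/p^2 \in (0, \eta)$ and set $g(x) := f(x) - \tfrac{\alpha}{2}|x|^2$ on $E$. For every $S \subset E$ with $\#S \leq k^\#$, define $G^S(x) := F^S(x) - \tfrac{\alpha}{2}|x|^2$. Since $F^S$ is $\eta$-strongly convex with $\Lip(\nabla F^S) \leq M$, its Hessian lies (in the weak sense) in $[\eta I, MI]$, so $G^S$ is $(\eta - \alpha)$-strongly convex---hence in particular convex---with $\Lip(\nabla G^S) \leq M - \alpha \leq M$, and $G^S|_S = g|_S$. Thus the data $g$ on $E$ satisfies the hypotheses of a convex (non-strongly-convex) $C^{1,1}$ Finiteness Principle at scale $(k^\#, M)$.

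Second, invoke a Finiteness Principle for convex $C^{1,1}_c(\R^n)$ functions. The quantitative form of this principle, adapted to the present setting, is expected to yield a convex extension $G \in C^{1,1}_c(\R^n)$ of $g|_E$ with $\Lip(\nabla G) \leq C_2 q^2 M^2/\eta$ for a dimension-only constant $C_2$. The $q^2 M^2/\eta$ scaling should emerge from the patching step of the convex finiteness principle: when gluing local convex pieces into a global convex function one must inflate the Lipschitz constant of the patched gradient to preserve monotonicity, and the conjugate exponents $(p,q)$ arise from a Young's inequality–type balance between the smoothness bound $M$ and the strong-convexity scale $\eta$ inherited from the local extensions.

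Third, restore strong convexity by defining $F(x) := G(x) + \tfrac{\eta}{2 p^2}|x|^2$. Then $F \in C^{1,1}_c(\R^n)$, $F|_E = g|_E + \tfrac{\alpha}{2}|\cdot|^2|_E = f|_E$, $F$ is $(\eta/p^2)$-strongly convex because $G$ is convex and the added quadratic contributes exactly $\eta/p^2$ to the Hessian from below, and $\Lip(\nabla F) \leq \Lip(\nabla G) + \eta/p^2 \leq C_1 q^2 M^2/\eta$ after absorbing the lower-order term $\eta/p^2$ into the constant (using $M > \eta$).

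The main obstacle is the convex $C^{1,1}$ Finiteness Principle invoked in the middle step. Its proof should follow a Whitney-type construction---in the spirit of Fefferman's work on $C^{m,\omega}$ extensions---adapted so that the patched function is actually convex, for example by building the extension as a smoothed supremum of supporting affine functionals selected from the local $G^S$. The heart of the argument is to track, across the Whitney decomposition, both the smoothness of the gradient and its monotonicity, and to quantify the unavoidable $M^2/\eta$ blowup coming from the tension between local Lipschitz smoothness and the global monotonicity constraint. Once that principle is in hand, the three-line reduction above delivers Theorem~\ref{scthm}.
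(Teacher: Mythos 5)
Your reduction step is clean and your final translation back to strong convexity is fine, but the middle step---invoking a Finiteness Principle for merely convex functions in $C^{1,1}_c(\R^n)$---is not something that is ``established elsewhere in the paper,'' or anywhere else, for $n \geq 1$. In fact, such a principle (with any reasonable quantitative constant) is precisely what remains open in general dimension; the paper only establishes a convex $C^{1,1}$ Finiteness Principle in dimension $n=1$ (Theorem~\ref{1dthm}), with constant $2M$, and the strongly convex result Theorem~\ref{scthm} in $n \geq 1$. Your proposal therefore has a genuine gap: you have shifted the entire difficulty into an unproven black box, and the sketch of how that box might be built (``smoothed supremum of supporting affine functionals,'' Whitney patching) is exactly the proof that nobody knows how to carry out for the convex class without additional hypotheses.

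There is also an internal inconsistency in the way you state the black box. After subtracting $\frac{\alpha}{2}|x|^2$ the local extensions $G^S$ are still $(\eta - \alpha)$-strongly convex (with $\eta - \alpha > 0$), so you have not actually reduced to the purely convex problem; and the bound $C_2 q^2 M^2/\eta$ that you want from a ``convex FP'' cannot follow from a theorem that sees only convexity, because after your reduction the parameter $\eta$ no longer appears in the input data. The $M^2/\eta$ scaling is not a generic patching loss: it records the tension between the Lipschitz bound on gradients and the strong convexity lower bound, and can only be produced if the argument retains and exploits the strong convexity all the way through. That is exactly what the paper's actual proof does: it defines the prospective-jet sets $\Gamma^E_\eta(x)$, uses Helly's theorem (Lemma~\ref{k1}) to verify their local non-emptiness, applies the Fefferman--Israel--Luli Lipschitz-selection theorem (Proposition~\ref{selthm}) to select gradients with $\Lip \leq C^\# M$, and then combines that Lipschitz bound with the strong convexity inequality $P_x(y) + \frac{\eta}{2}|y-x|^2 \leq f(y)$ to verify the Azagra--Le Gruyer--Mudarra compatibility condition $\sim_{(C^\#)^2 M^2/\eta}$, finally extending via Proposition~\ref{scprop}. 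The $M^2/\eta$ appears only at this last comparison step, precisely because strong convexity is kept in play rather than discarded.
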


Fix a constant $C_0>C_1q^2$ and $p, q \in (1, \infty)$ satisfying $\frac{1}{p}+\frac{1}{q}=1$; suppose the hypotheses of Theorem \ref{scthm} are satisfied by $E,f, M$, and $\eta \in (\frac{C_1q^2}{C_0}M ,M)$. Then Theorem \ref{scthm} produces an $\eta/p^2$-strongly convex extension of $f$, $F \in C^{1,1}_c(\R^n)$ satisfying $\Lip(\nabla F) \leq C_0 M$. But if instead the hypotheses are satisfied by $E,f,M$, and $\eta$ much smaller than $M$ ($\eta \in [0, \frac{C_1q^2}{C_0}M)$), we expect this theorem is not optimal. We conjecture that satisfaction of the hypotheses of Theorem \ref{scthm} ensures the existence of a strongly convex extension of $f$, i.e. $F \in C^{1,1}_c(\R^n)$ satisfying $\Lip(\nabla F) \leq C M$, where $C$ depends on $n, p$, and $q$, but not on $\eta$ or $M$.
Indeed this is true in dimension $n=1$, which is our second result:


\begin{THM} \label{1dthmsc}
Let $E \subset \R$ be compact, the constants $\eta, M$ satisfy $M>\eta \geq 0$, and the function $f:E \to \R$. Suppose for every $S \subset E$ satisfying $\#S \leq {k_1^\#}=5$, there exists an $\eta$-strongly convex function $F^S \in C^{1,1}_c(\R)$ satisfying $F^S|_S=f|_S$ and $\Lip(\nabla F^S) \leq M$. Then there exists an $\eta$-strongly convex function $F \in C^{1,1}_c(\R)$ satisfying $F|_E = f|_E$ and $\Lip (\nabla F) \leq 5M$.
\end{THM}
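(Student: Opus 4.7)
The plan is to reduce from compact $E$ to finite $E$ by a normal-families argument, then for finite $E$ to assign a derivative value $\alpha_i$ to each $x_i \in E$ using the 5-point hypothesis, and finally to interpolate between consecutive points of $E$ with explicit piecewise-quadratic convex $C^{1,1}$ Hermite pieces. For the reduction, exhaust $E$ by finite subsets $E_1 \subset E_2 \subset \cdots$ with $\overline{\bigcup_j E_j} = E$; if each $E_j$ admits the claimed extension $F_j$ with $\Lip(F_j') \leq 5M$, normalize at a fixed reference point so that $\{F_j\}$ is locally equi-$C^{1,1}$, and apply Arzel\`{a}--Ascoli to extract a $C^1_{\mathrm{loc}}$ limit $F$ with the required properties.

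For finite $E = \{x_1 < \cdots < x_N\}$ with $L_i = x_{i+1}-x_i$, I would choose for each consecutive pair $(x_i, x_{i+1})$ a 5-point subset $S_i \subset E$ containing both, such that consecutive $S_i, S_{i+1}$ overlap in four points (e.g., $S_i = \{x_{i-1}, x_i, x_{i+1}, x_{i+2}, x_{i+3}\}$, with boundary adjustments). The hypothesis provides $F^{S_i} \in C^{1,1}_c(\R)$ with $F^{S_i}|_{S_i} = f|_{S_i}$, $\eta$-strongly convex, $\Lip((F^{S_i})') \leq M$; the local derivatives $(F^{S_i})'(x_j)$ for $x_j \in S_i$ automatically satisfy the pairwise Whitney conditions and the Azagra--Le Gruyer--Mudarra compatibility with constant $M$. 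I set $\alpha_i := (F^{S_i})'(x_i)$. At each junction $x_{i+1}$, the derivative from the left, $(F^{S_i})'(x_{i+1})$, need not equal $\alpha_{i+1} = (F^{S_{i+1}})'(x_{i+1})$; Rolle's theorem applied to $F^{S_i}-F^{S_{i+1}}$ on its four common zeros produces three interior zeros of the derivative difference, and the $2M$-Lipschitz bound on that difference yields the quantitative jump estimate
\[
|(F^{S_i})'(x_{i+1}) - \alpha_{i+1}| \leq 2M \min(L_i, L_{i+1}).
\]

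On each gap $[x_i, x_{i+1}]$, produce $F|_{[x_i, x_{i+1}]}$ as a piecewise-quadratic convex $C^{1,1}$ Hermite interpolant with two constant second-derivative pieces in $[\eta, 5M]$ matching $(f(x_i), \alpha_i)$ and $(f(x_{i+1}), \alpha_{i+1})$ at the endpoints; feasibility of this two-parameter moment problem is equivalent to the Whitney pair conditions $\eta L_i \leq \alpha_{i+1}-\alpha_i \leq 5M L_i$ and $\eta L_i^2/2 \leq f(x_{i+1})-f(x_i)-\alpha_i L_i \leq 5M L_i^2/2$ (plus the symmetric version at $x_{i+1}$) together with an Azagra--Le Gruyer--Mudarra quadratic compatibility. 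The Whitney-remainder bounds follow directly from $F^{S_i}, F^{S_{i+1}}$; the upper derivative-change bound follows from the decomposition $\alpha_{i+1}-\alpha_i = [(F^{S_{i+1}})'(x_{i+1})-(F^{S_i})'(x_{i+1})] + [(F^{S_i})'(x_{i+1})-(F^{S_i})'(x_i)]$ together with the jump estimate, giving $\alpha_{i+1}-\alpha_i \leq 3M L_i$. The main obstacle is the lower derivative-change bound $\alpha_{i+1}-\alpha_i \geq \eta L_i$, for which the naive Rolle-based estimate has the wrong sign; overcoming this likely requires either a more judicious selection of the 5-point subsets (e.g., sharing a single subset across several adjacent gaps) or a refinement of the $\alpha_i$ within the globally admissible interval carved out by the 3-point strong-convexity inequalities, and this is the step in which the combinatorial shape of 5-point subsets is used in an essential way, ultimately yielding the factor of $5$ in $\Lip(F') \leq 5M$. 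The tails $(-\infty, x_1)$ and $(x_N, \infty)$ are extended by quadratic-plus-affine pieces preserving all bounds.
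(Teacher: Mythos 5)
Your global structure (Arzel\`a--Ascoli reduction to finite $E$, then local gluing across consecutive gaps) is sound, and matches the paper's high-level plan. But there is a genuine gap at the heart of the finite case, and it is broader than the one you flag.

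The obstruction is not only the lower bound $\alpha_{i+1} - \alpha_i \geq \eta L_i$; it is the Azagra--Le Gruyer--Mudarra compatibility itself, already in the purely convex case $\eta = 0$. Writing $a = \alpha_{i+1} - D^f_{x_i x_{i+1}}$ and $b = D^f_{x_i x_{i+1}} - \alpha_i$, the fitting of a convex $C^{1,1}$ piece on $[x_i, x_{i+1}]$ with $\Lip(F') \leq K$ matching both Hermite data requires $aL_i \geq \frac{(a+b)^2}{2K}$ and $bL_i \geq \frac{(a+b)^2}{2K}$. With the naive choice $\alpha_i := (F^{S_i})'(x_i)$, where $F^{S_i}, F^{S_{i+1}}$ are unrelated extensions supplied by the hypothesis, there is nothing preventing $a \approx 0$ while $b$ is of full size $\approx \frac{M}{2}L_i$ (e.g.\ $F^{S_{i+1}}$ can be affine on $[x_i, x_{i+1}]$ while $F^{S_i}$ is not); then the required $K$ blows up and no finite Lipschitz constant works. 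The Rolle jump estimate $|(F^{S_i})'(x_{i+1}) - \alpha_{i+1}| \leq 2M \min(L_i, L_{i+1})$ you derive is weaker than the straightforward Taylor bound $|\alpha_i - D^f_{x_i x_{i+1}}| \leq \frac{M}{2} L_i$ obtained directly from the single function $F^{S_i}$, and neither prevents the degenerate $a \approx 0$ scenario. So the step ``feasibility of this two-parameter moment problem'' fails for the selection rule you propose, and the gap you acknowledge (``refinement of the $\alpha_i$'') is precisely the content of the theorem, not a loose end.

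The paper handles this with two moves that you do not make. First, it reduces the strongly-convex statement to the $\eta = 0$ convex case by the substitution $g(x) = \frac{1}{1+\eta/M}\big(f(x) - \frac{\eta}{2}|x|^2\big)$, which maps $\eta$-strongly convex extensions with $\Lip \leq M$ to convex extensions with $\Lip \leq M$; the final constant $5M = 2M + 3\eta$ (with $\eta \leq M$) then comes from undoing the substitution. Second, for the convex finite case it does \emph{not} take $\alpha_i$ from a single extension: it defines explicit bounding slopes $\nabla P_i^\pm$, $\nabla P_i^{\ell,r}$ from divided differences and Taylor remainders, sets $\nabla\gamma_i$ to the \emph{midpoint} of the admissible interval $[\max\{\nabla P_i^-, D_{i-1,i}\}, \min\{\nabla P_i^+, D_{i,i+1}\}]$, shows $\gamma_{i-1} \sim_{2M} \gamma_i$ by a case analysis hinging on monotonicity of $t \mapsto -t + \sqrt{4\omega t}$, and then extends to all pairs by a one-dimensional transitivity lemma for $\sim_M$. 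That midpoint choice is exactly what rules out the degenerate $a \approx 0$ situation, and it is the step your sketch leaves unrealized.
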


\begin{remark}
    In Theorem \ref{1dthmsc}, no constant smaller than ${k_1^\#}=5$ will suffice (the \emph{sharp} finiteness constant for $C^{1,1}_c(\R)$ is ${k_1^\#}=5$). To see this, consider the following example: Let $E \subset \R$ be $E:= \{ -2, -1, 0, 1, 2 \}$; for $x \in E$, let $f(x) := |x|$. For every set $S \subset E$ satisfying $\#S \leq 4$, one can construct a convex function $F^S \in C^{1,1}_c(\R)$ satisfying $F|_S = f|_S$, but any convex extension of $f$, $F: \R \to \R$ must satisfy $F(x) = |x|$ for $x \in [-2,2]$, which is not differentiable at $x=0$. Thus, we must have ${k_1^\#}>4$.
\end{remark}

Our results are the first attempt to understand the constrained interpolation problem for \emph{convex} functions in $C^{1,1}_c(\R^n)$.\footnote{The constrained interpolation problem where the interpolating function is required to be non-negative has been studied by C. Fefferman, A. Israel, and K. Luli in \cite{arie10}, and K. Luli, and F. Jiang in \cite{jilu1} and \cite{jilu2}. } We build on techniques used to understand whether a function has a smooth extension despite obstacles to their direct application.

Let $\mathbb{X}(\R^n) \subset C(\R^n)$ be a complete semi-normed space of continuous functions. Given a compact set $E \subset \R^n$ and a function $f:E \to \R$, how can we tell if there exists $F \in \mathbb{X}(\R^n)$ extending $f$, that is satisfying $F|_E=f|_E$? In \cite{shv7,shv6}, Pavel Shvartsman answered this question for the linear space $\mathbb{X}(\R^n) = C^{1.1}(\R^n)$ through a \emph{Finiteness Principle}.

We say that there is a \emph{Finiteness Principle for $\mathbb{X}(\R^n)$} if there exist $k \in \N$, $C>0$ depending on $\mathbb{X}(\R^n)$ such that given $E \subset \R^n$ finite and $f:E \to \R$, if we assume for every $S \subset E$ satisfying $\#S \leq k$, there exists $F^S \in \mathbb{X}(\R^n)$ satisfying $F^S|_S = f|_S$ and $\|F^S\|_{X(\R^n)} \leq 1$, then there exists $F \in \mathbb{X}(\R^n)$ such that $F|_E = f|_E$ and $\|F\|_{\mathbb{X}(\R^n)} \leq C$.

Further, P. Shvartsman proved that the {sharp} finiteness constant for $C^{1,1}(\R^n)$ is $k = 3\cdot 2^{n-1}$, and conjectured with Yuri Brudnyi that Finiteness Principles for the linear spaces $C^{m}(\R^n)$ and $C^{m-1,1}(\R^n)$ would hold in \cite{brshv1,brshv2}. In \cite{f1,f2} Charles Fefferman proved Finiteness Principles for these spaces ($C^{m-1,1}(\R^n)$ and $C^{m}(\R^n)$). 

Theorems \ref{scthm} and \ref{1dthmsc} are progress toward the proof of a Finiteness Principle for the non-linear space of smooth convex function $C^{1,1}_c(\R^n)$. Our hope is this work and the continued study of finiteness principles for smooth convex functions allow the development of algorithms for constructing smooth, convex extensions of a function (or its approximation) analogous to the work by C. Fefferman and Boaz Klartag in \cite{f7,f8} for $C^m(\R^n)$. 

Our proofs of Theorems \ref{scthm} and \ref{1dthmsc} rely on an inequality relating the \emph{jets} of a convex function in $C^{1,1}_c(\R^n)$. Let $\mpp$ be the space of real-valued affine (degree one) polynomials. For $F \in C^1(\R^n)$, we define the jet of $F$ at $x$, $J_xF \in \mpp$ as $J_xF(y):= F(x) + \langle \nabla F(x), y-x \rangle$.
Let the function $F \in C_c^{1,1}(\R^n)$ be convex; as a consequence of Taylor's inequality, 
\begin{align*}
        F(x) - J_yF(x) \geq \frac{1}{2\Lip(\nabla F)}|\nabla F(x) - \nabla F(y)|^2 \quad \quad (x,y \in \R^n). 
\end{align*}
In Section \ref{sec:cw11}, we prove this inequality. In \cite{az1,az2}, Daniel Azagra, Erwan Le Gruyer, and Carlos Mudarra  proved a partial converse to this inequality, criteria for convex $C^{1,1}$-extension of degree one polynomials defined on a closed set $E \subset \R^n$, which is a key component of our proofs:

\begin{THM}[D. Azagra, E. Le Gruyer, and C. Mudarra \cite{az2}, Theorem 2.4] \label{azagra0}
    Let $E \subset \R^n$ be closed and the polynomials $(P_x)_{x \in E} \subset \mpp$ satisfy for all $x, y \in E$, 
    \begin{align}
    P_x(x) - P_y(x) \geq \frac{1}{2M}|\nabla P_x - \nabla P_y|^2. \label{cw11}    
    \end{align}
    Then there exists a convex function $F \in C^{1,1}_c(\R^n)$ satisfying $J_xF = P_x$ for all $x \in E$ and $\Lip(\nabla F) \leq M$. 
\end{THM}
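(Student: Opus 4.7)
My plan is to construct $F$ via Fenchel duality: since a convex $F$ has $\Lip(\nabla F) \leq M$ if and only if its Legendre conjugate $F^*$ is $\tfrac{1}{M}$-strongly convex, I would first build $F^*$ from the given data and then recover $F := F^{**}$.

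For each $y \in E$, set
\[
Q_y(\xi) := -P_y(y) + \langle y, \xi\rangle + \frac{1}{2M}|\xi - \nabla P_y|^2,
\]
and define $F^*(\xi) := \sup_{y \in E} Q_y(\xi)$. Each $Q_y$ has Hessian $\tfrac{1}{M} I$, and since the property ``$g - \tfrac{1}{2M}|\cdot|^2$ is convex'' is preserved under pointwise suprema, $F^*$ is itself $\tfrac{1}{M}$-strongly convex. By the standard duality between strong convexity and Lipschitz gradient, $F := F^{**}$ is then convex, lies in $C^{1,1}_c(\R^n)$, and satisfies $\Lip(\nabla F) \leq M$.

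It remains to verify $J_x F = P_x$ for every $x \in E$, which I would reduce to the identity
\[
F^*(\nabla P_y) = \langle y, \nabla P_y\rangle - P_y(y).
\]
The lower bound is trivial by evaluating $Q_y$ at $\xi = \nabla P_y$. The matching upper bound $Q_{y'}(\nabla P_y) \leq \langle y, \nabla P_y\rangle - P_y(y)$ for every $y' \in E$ unwinds, after using $P_y(y) + \langle \nabla P_y, y' - y\rangle = P_y(y')$, to
\[
P_{y'}(y') - P_y(y') \geq \tfrac{1}{2M}|\nabla P_{y'} - \nabla P_y|^2,
\]
which is exactly hypothesis (\ref{cw11}) applied to the pair $(y', y)$. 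Once this identity is established, strong convexity of $F^*$ gives differentiability, and computing $\nabla_\xi Q_y$ at the (attained) maximizer yields $\nabla F^*(\nabla P_y) = y$; Fenchel--Young duality then delivers $\nabla F(y) = \nabla P_y$ and $F(y) = \langle \nabla P_y, y\rangle - F^*(\nabla P_y) = P_y(y)$.

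The hard part is really only this upper-bound computation, which is the single place where hypothesis (\ref{cw11}) enters the argument; everything else is routine convex duality. A pleasant byproduct is that the constant $M$ in $\Lip(\nabla F)$ is automatically sharp, being inherited directly from the $\tfrac{1}{M}$-strong-convexity constant of $F^*$, and no Whitney-style extension of the jets $(P_y)_{y\in E}$ to $\R^n$ is needed.
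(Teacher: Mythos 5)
Your duality route is essentially sound, and the single computation you flag as ``the hard part'' is indeed correct: for $y,y'\in E$, the inequality $Q_{y'}(\nabla P_y)\le \langle y,\nabla P_y\rangle - P_y(y)$ rearranges exactly to $P_{y'}(y')-P_y(y')\ge \tfrac{1}{2M}\lvert\nabla P_{y'}-\nabla P_y\rvert^2$, which is hypothesis (\ref{cw11}). This settles $F^*(\nabla P_y)=\langle y,\nabla P_y\rangle-P_y(y)$. Note that the paper does not prove this theorem; it is imported from Azagra, Le Gruyer, and Mudarra, whose construction is the primal twin of yours: they take the convex envelope of $z\mapsto\inf_{y\in E}\bigl[P_y(z)+\tfrac{M}{2}\lvert z-y\rvert^2\bigr]$, whereas you build $F^*=\sup_{y\in E}Q_y$ and pass to the biconjugate --- the same object viewed through the Legendre transform.

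There is, however, one misstatement worth fixing. You write that ``strong convexity of $F^*$ gives differentiability'' and then compute $\nabla F^*(\nabla P_y)=y$. Strong convexity of $F^*$ does \emph{not} make $F^*$ differentiable; it makes the conjugate $F=F^{**}$ $C^{1,1}$ with $\Lip(\nabla F)\le M$. Indeed, if two distinct points $y\ne y'$ of $E$ carry the same affine jet $P_y=P_{y'}$ (which (\ref{cw11}) permits), then both $y$ and $y'$ lie in $\partial F^*(\nabla P_y)$ and $F^*$ is not differentiable there. The fix is immediate and does not need differentiability of $F^*$ at all: since $F^*\ge Q_y$ everywhere with equality at $\nabla P_y$, and $Q_y$ is convex with $\nabla Q_y(\nabla P_y)=y$, one gets $F^*(\xi)\ge Q_y(\xi)\ge Q_y(\nabla P_y)+\langle y,\xi-\nabla P_y\rangle=F^*(\nabla P_y)+\langle y,\xi-\nabla P_y\rangle$, i.e.\ $y\in\partial F^*(\nabla P_y)$. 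Fenchel--Young then gives $F(y)+F^*(\nabla P_y)=\langle y,\nabla P_y\rangle$, hence $F(y)=P_y(y)$, and $\nabla P_y\in\partial F(y)=\{\nabla F(y)\}$ because $F$ \emph{is} differentiable (its conjugate is strongly convex). With that correction the argument is complete, and you should also record in passing that $F^*$ is proper and lower semicontinuous (it is finite at each $\nabla P_y$, and is a supremum of continuous functions), so the strong-convexity--to--Lipschitz-gradient duality applies in the form you need.
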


We now give a sketch of the proof of Theorem \ref{scthm}. We write $c, C, C',$ etc. to denote constants dependent only on the dimension $n$. By appealing to the Arzel\`a-Ascoli Theorem, we reduce to the case $E \subset \R^n$ finite. In Proposition \ref{scprop}, we prove that if $(P_x)_{x \in E}$ satisfy \eqref{cw11} and 
\begin{align}
    P_x(y) + \frac{\eta}{2}|y-x|^2 \leq P_y(y) \quad \text{ for all }x,y \in E, \label{scintro}
\end{align}
then the conclusions of Theorem \ref{azagra0} hold for a \emph{strongly} convex function $F \in C^{1,1}_c(\R^n)$ satisfying $\Lip(\nabla F) \leq CM$. Thus, given $f:E \to \R$, we aim to find $(P_x)_{x \in E} \subset \mpp$ satisfying $P_x(x) =f(x)$ and the inequalities (\ref{cw11}) and \eqref{scintro} with uniform constants $M$ and $\eta$ for all $x,y \in E$. To do this, we introduce an approximation of the set of prospective jets of strongly-convex $C^{1,1}_c(\R^n)$ extensions of $f$. 
For $x \in E$, $\eta>0$, let $\Gamma^E_\eta(x) \subset \mpp$ be
\begin{align*}
    \Gamma^E_\eta(x):=\{ P \in \mpp: P(x) = f(x) \text{ and } P(y) + \frac{\eta}{2}|y-x|^2\leq f(y) \text{ for all } y \in E \setminus \{x\}\}.
\end{align*}
Immediately, we see for an $\eta$-strongly convex extension $F \in C^{1,1}_c(\R^n)$ satisfying $F|_E = f|_E$ and $\Lip (\nabla F) \leq M$, we have $J_xF \in \Gamma^E_\eta(x)$ for all $x \in E$. We prove we can choose $(P_x)_{x \in E} \subset \mpp$ so that 
\begin{align}
    &P_x \in \Gamma^E_\eta(x) &&(x \in E), \text{ and} \label{j1} \\
    &\sup_{x,y \in E, x \neq y} \left\{ \frac{|\nabla P_x - \nabla P_y|}{|x-y|} \right\} \leq C'M.    \label{j2}
\end{align}
Together \eqref{j1} and \eqref{j2} imply
\begin{align*}
    f(y) - P_x(y) \geq \frac{\eta}{2}|y-x|^2 \geq \frac{\eta}{2(C'M)^2}|\nabla P_x - \nabla P_y|^2 \quad \quad (x, y \in E).
\end{align*}
Hence, this choice of $(P_x)_{x \in E}$ satisfies (\ref{cw11}) with a constant $(C'M)^2/\eta$, and we can apply Proposition \ref{scprop}.

To prove we can choose $(P_x)_{x \in E}$ satisfying \eqref{j1} and \eqref{j2}, we use Helly's Theorem, (following P. Shvartsman in \cite{shv7}, \cite{shv6}, and C. Fefferman in \cite{f1}, \cite{f2}) and a Finiteness Principle for Smooth Selection proved by C. Fefferman, Arie Israel, and Kevin Luli in \cite{arie9}; see also C. Fefferman and P. Shvartsman's results in \cite{fshv}.

\begin{THM}[Helly, see e.g. \cite{helly}] \label{helly}
    Let $\mathcal{J}$ be a finite family of convex subsets of $\R^d$, and suppose every $(d+1)$ elements of the family has non-empty intersection. Then the entire family has non-empty intersection. If $\mathcal{J}$ is infinite, the sets must also be compact for the result to follow.
\end{THM}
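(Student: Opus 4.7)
The plan is induction on $n = |\mathcal{J}|$ in the finite case, using Radon's partition theorem as the essential geometric input, and then a short compactness argument for the infinite case. When $n = d+1$ the hypothesis is precisely the conclusion, so assume the result holds for families of size $n-1$ with $n \geq d+2$, and write $\mathcal{J} = \{C_1, \ldots, C_n\}$. For each index $k$, the subfamily obtained by deleting $C_k$ has size $n-1$ and inherits the $(d+1)$-wise intersection hypothesis from $\mathcal{J}$, so the inductive hypothesis supplies a point $x_k \in \bigcap_{j \neq k} C_j$.

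Since $n \geq d+2$, Radon's theorem gives a partition $\{1, \ldots, n\} = I \sqcup J$ for which $\mathrm{conv}\{x_i : i \in I\} \cap \mathrm{conv}\{x_j : j \in J\} \neq \emptyset$. I pick a common point $p$ in this intersection and claim $p \in \bigcap_{k=1}^n C_k$. Fix $k$; without loss of generality $k \in I$. For every $j \in J$ one has $j \neq k$, hence $x_j \in C_k$ by construction, and since $p$ lies in the convex hull of $\{x_j : j \in J\}$, convexity of $C_k$ forces $p \in C_k$. The case $k \in J$ is symmetric, so $p$ witnesses the full intersection, closing the induction.

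For the infinite case, assume additionally that the members of $\mathcal{J}$ are compact. Applying the finite case to every finite subfamily shows $\mathcal{J}$ has the finite intersection property. Fixing any $C_0 \in \mathcal{J}$, the collection $\{C_0 \cap C : C \in \mathcal{J}\}$ consists of closed subsets of the compact set $C_0$ with the finite intersection property, hence has non-empty intersection, which equals $\bigcap \mathcal{J}$.

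The only step where something genuinely nontrivial happens is the invocation of Radon's theorem, and I expect that to be the main thing worth justifying carefully. It reduces, however, to a short linear-algebra observation: any $n \geq d+2$ points $x_1, \ldots, x_n \in \R^d$ are affinely dependent, so there exist scalars $\lambda_1, \ldots, \lambda_n$, not all zero, with $\sum_i \lambda_i x_i = 0$ and $\sum_i \lambda_i = 0$; partitioning by the sign of $\lambda_i$ and normalizing by the common positive value $\sum_{\lambda_i > 0} \lambda_i = -\sum_{\lambda_i < 0} \lambda_i$ produces the required partition together with the shared convex combination $p$.
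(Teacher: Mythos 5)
Your proof is correct, and it is the standard inductive argument via Radon's theorem found in the references. The paper does not actually prove Helly's theorem --- it simply cites \cite{helly} --- so there is no in-paper proof to compare against; what you have written is exactly the textbook argument one would expect to find there. The Radon partition, the observation that for $k \in I$ every $x_j$ with $j \in J$ lies in $C_k$ so that $p \in \mathrm{conv}\{x_j : j \in J\} \subset C_k$, the symmetric case $k \in J$, and the reduction of the infinite compact case to the finite intersection property are all correct, and your sketch of Radon's theorem via affine dependence and sign-splitting the coefficients is also standard and sound.

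One small remark concerns the \emph{statement} rather than your argument: as worded, the hypothesis ``every $(d+1)$ elements of the family has non-empty intersection'' is vacuous when $\#\mathcal{J} \leq d$, in which case the conclusion can fail (two disjoint convex sets in $\R^2$, for instance). Your induction correctly starts at $n = d+1$ and so implicitly assumes $\#\mathcal{J} \geq d+1$; a fully precise phrasing would either impose that lower bound or say ``every at most $d+1$ members intersect.'' This is a defect of the informal statement, not of your proof.
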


For $D \geq 1$, let $C^{0,1}(\R^n, \R^D)$ denote the Banach space of all $\R^D$-valued Lipschitz functions $F$ on $\R^n$, for which the norm $\|F\|_{C^{0,1}(\R^n, \R^D)} = \sup_{x \in \R^n} \{ | F(x)| \}+ \Lip( F)$, is finite.

\begin{THM}[C. Fefferman, A. Israel, and K. Luli (Theorem 3(B) of \cite{arie9})] \label{selthm0}
    There exist $k^\#_s=k^\#_s(n,D) \in \N$ and $C^\#=C^\#(n,D)>0$ such that the following holds: Let $E \subset \R^n$ be arbitrary. For each $x \in E$, let $K(x) \subset \R^D$ be a closed convex set. Suppose that for each $S \subset E$ with $\#S \leq k^\#_s$, there exists $F^S \in C^{0,1}(\R^n,\R^D)$ with norm $\|F^S\|_{C^{0,1}(\R^n,\R^D)} \leq 1$, such that $F^S(x) \in K(x)$ for all $x \in S$. Then there exists $F \in C^{0,1}(\R^n,\R^D)$ with norm $\|F^S\|_{C^{0,1}(\R^n,\R^D)} \leq C^\#$, such that $F(x) \in K(x)$ for all $x \in E$.
\end{THM}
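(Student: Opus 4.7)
The plan is to reduce to a local, pointwise selection problem and then assemble the global Lipschitz map via Whitney/Kirszbraun-type extension, with \emph{Helly's theorem} in $\R^D$ converting finite-intersection information into global information about the set-valued map $K$. By a standard Arzel\`a–Ascoli diagonalization, it suffices to treat finite $E$: Lipschitz functions with uniformly bounded norms form a precompact family on any compact subset of $\R^n$, and the constraints $F(x)\in K(x)$ are preserved under pointwise limits by closedness of each $K(x)$.

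Given finite $E$, the goal becomes to identify inside each $K(x)$ a smaller closed convex subset $K^*(x)\subset K(x)$ of values consistent with some global Lipschitz selection of the required norm, and then pick $F(x)\in K^*(x)$ pointwise. To build $K^*$ I would iterate a Glaeser-style refinement,
\[
K^{(0)}(x):=K(x),\qquad K^{(l+1)}(x):=\bigl\{v\in K^{(l)}(x)\,:\,\dist\bigl(v,K^{(l)}(y)\bigr)\le C^\#|x-y|\text{ for all }y\in E\bigr\},
\]
for a fixed absolute constant $C^\#$. Each $K^{(l)}(x)$ is closed and convex, the sequence is decreasing in $l$, and the hypothesis applied to pairs $\{x,y\}$ gives $K^{(1)}(x)\neq\emptyset$. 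Two things must then be proved: (i) the nested sequence stabilizes at some level $l^*\le l^*(D)$ depending only on $D$; and (ii) at this stable level, the finiteness hypothesis with a suitable $k_s^\#=k_s^\#(n,D)$ forces $K^{(l^*)}(x)\neq\emptyset$ for every $x\in E$. Propagation of nonemptiness through a single refinement step proceeds via Helly's theorem in $\R^D$, which multiplies the required ``depth'' of the hypothesis by the Helly number $D+1$; after $l^*$ iterations the required depth grows to order $(D+1)^{l^*}$ times a Whitney combinatorial factor depending on $n$, which is how $k_s^\#$ comes to depend on both $n$ and $D$.

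Once the refinement has stabilized to $K^*$, any pointwise selection $F_0(x)\in K^*(x)$ automatically satisfies $\dist(F_0(x),K^*(y))\le C^\#|x-y|$ by construction, so a short barycentric correction followed by a Kirszbraun/McShane extension from $E$ to $\R^n$ yields $F\in C^{0,1}(\R^n,\R^D)$ with norm bounded by an absolute multiple of $C^\#$; the sup-norm control in the target is recovered by truncating against a fixed element of one $K(x_0)$ and using the Lipschitz bound. The main obstacle is step (i): bounding $l^*(D)$ requires exhibiting a strictly decreasing combinatorial complexity on the refined sets $K^{(l)}(x)$, in the spirit of Shvartsman's ``basic convex set'' dimension profile, and this is the technical heart of the argument — the Helly step in (ii) is comparatively transparent, but without a finite termination of the refinement there is no hope of turning pairwise compatibility into a genuine Lipschitz selection.
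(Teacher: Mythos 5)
The paper does not prove Theorem \ref{selthm0}; it is imported verbatim as Theorem 3(B) of Fefferman--Israel--Luli \cite{arie9}, so there is no ``paper's own proof'' to compare against. What you have written is a blind sketch of the cited result, and it should be judged on those terms.

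On those terms, the sketch has a genuine gap at its center, which you yourself flag. The Glaeser-style iteration
$K^{(l+1)}(x) = \{ v \in K^{(l)}(x) : \dist(v, K^{(l)}(y)) \le C^\#|x-y| \text{ for all } y\}$
does not, for general closed convex fibers in $\R^D$, terminate after a number of steps bounded by a function of $D$ alone. The dimension-drop arguments that make Glaeser refinement stabilize in the jet-space setting (Bierstone--Milman--Paw\l ucki, used by Fefferman for the $C^m$ Whitney problem) rely on the fibers being affine subspaces, where dimension is a strictly decreasing integer invariant under a nontrivial refinement. For arbitrary convex bodies there is no such monotone invariant, and no analogue of the $2D+1$ stabilization bound is available; this is precisely why Fefferman--Israel--Luli (and Fefferman--Shvartsman in the parallel work) do \emph{not} argue this way. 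Their proof instead reduces to finite $E$, sets up a Calder\'on--Zygmund decomposition into dyadic cubes, and runs an induction on a labeled/``basic convex set'' structure, invoking Helly inside the inductive step at each scale; the finiteness constant $k^\#_s(n,D)$ arises from the depth of that induction combined with the CZ combinatorics, not from iterating Glaeser refinement $l^*(D)$ times. The Helly-propagation and Kirszbraun/McShane patching you describe at the end are fine once one has a stabilized selector, but item (i) in your plan is exactly the missing theorem, and filling it would require reinventing the CZ/induction machinery of \cite{arie9} rather than the refinement loop you propose. As a review of the literature this identifies the right ingredients (Helly, Whitney-type patching, Arzel\`a--Ascoli reduction) but not the right architecture.
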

 
We assume the following Finiteness Hypothesis: for every $S \subset E$, $\#S \leq {k^\#}$, there exists an $\eta$-strongly convex function $F^S \in C^{1,1}_c(\R^n)$ satisfying $F^S|_S=f|_S$ and $\Lip(\nabla F^S) \leq M$, with ${k^\#} = k^\#_s(n+2)+1$ and $k^\#_s= k^\#_s(n,n)$ from Theorem \ref{selthm0}. Using this hypothesis, we can apply Helly's Theorem to show the hypotheses of Theorem \ref{selthm0} are satisfied for the family of convex sets $(K(x) = \{ \nabla P: P \in \Gamma^E_\eta(x)\})_{x \in E}$ in $\R^n$. Thus, we can apply Theorem \ref{selthm0} to produce a Lipschitz selection $G \in C^{0,1}(\R^n,\R^n)$ from $(K(x))_{x \in E}$ satisfying $\Lip(G) \leq C'M$. For $x \in E$, we let $P_x(x) :=f(x)$ and $\nabla P_x := G(x)$, and as promised, $(P_x)_{x \in E}$ satisfies \eqref{j1} and \eqref{j2}.

This concludes our sketch of the proof of Theorem \ref{scthm}. The rest of the paper is organized as follows: In Section \ref{sec:tech}, we adapt Theorems \ref{azagra0} and \ref{selthm0} to our setting and analyze sets approximating the set of jets of smooth convex extensions of the function $f$, including ${\Gamma}^E_\eta(x)$. In Section \ref{sec:scthm}, we prove Theorem \ref{scthm} for dimension $n\geq1$. In Section \ref{sec:1dthm}, we detail technical estimates that hold only in dimension $n=1$, and prove Theorem \ref{1dthmsc}.

\subsection{Acknowledgments}
The author is grateful to Arie Israel for providing valuable comments on an early draft of this paper and the National Science Foundation for its generous support.

\subsection{Notation}
Let $E \subset \R^n$, $f:E \to \R$. We use the following notation:
\begin{align*}
&|x| := |x|_2= (|x_1|^2 + \dots + |x_n|^2)^{1/2}. &&(x = (x_1, \dots, x_n) \in \R^n ); \\
& B(y, R):= \{ x \in \R^n: |x-y| < R \} &&(y \in \R^n, \; R \geq 0 ); \\
&D^f_{xy}:=\frac{f(y) - f(x)}{y-x} 
&&(x,y \in E, x \neq y, \; E \subset \R).
\end{align*}

Let $\Omega \subset \R^n$ be a \emph{domain} (i.e., a non-empty, connected open set), and let the vector-valued function $F:\Omega \to \R^D$. The Lipschitz constant of the function $F$ is
\begin{align*}
&\Lip(F; \Omega): = \sup_{x,y \in \Omega, x \neq y} \frac{|F(x) - F(y)|}{|x-y|}.
\end{align*}
Where the domain $\Omega$ is evident, we write $\Lip(F)$ in place of $\Lip(F;\Omega)$.

For $m=0$ or $m=1$, let $C^m(\Omega)$ denote the Banach space of real-valued $C^m$ functions $F$ on $\Omega$ for which the norm 
$$
\|F\|_{C^{m}(\Omega)} = \sup_{x \in \Omega} \max_{|\alpha| \leq m} |\p^\alpha F(x)|
$$
is finite, and $C^{m,1}(\Omega)$ denote the Banach space of real-valued $C^m$ functions $F$ on $\Omega$ with Lipshitz continuous gradient for which the norm 
$$
\|F\|_{C^{m,1}(\Omega)} = \|F\|_{C^{m}(\Omega)} + \Lip(\nabla^m F;\Omega) 
$$
is finite. 

For $D \geq 1$, let $C^{m,1}(\R^n, \R^D)$ denote the Banach space of vector-valued $C^m$ functions $F$ on $\R^n$, for which the norm
$$
\|F\|_{C^{m,1}(\R^n, \R^D)} = \sup_{x \in \R^n} \max_{|\alpha| \leq m} |\p^\alpha F(x)| + \Lip(\nabla^m F;\R^n)
$$
is finite.

Let $C^{m,1}_{loc}(\Omega)$ denote the space of functions $F$ on $\R^n$ satisfying $\|F\|_{C^{m,1}(\Omega')} < \infty$ for all bounded open sets $\Omega' \subset \subset \Omega$. 

Let $\Omega \subset \R^n$ be a convex domain. Let $C_c^{1,1}(\Omega) \subset C^{1,1}_{loc}(\Omega)$ denote the space of convex, differentiable functions with Lipschitz continuous gradient. 

Let $\mpp$ be the space of real-valued affine (degree one) polynomials. For $F \in C^1(\R^n)$, we define the jet of $F$ at $x$, $J_xF \in \mpp$ as 
$$J_xF(y):= F(x) + \langle \nabla F(x),y-x\rangle.$$ 
For each $x \in \R^n$, the jet product $\odot_x$ on $\mpp$ is
defined by
\[
P \odot_{x} Q:=J_{x}(P \cdot Q) \quad(P, Q \in \mpp).
\]
Let $\mathcal{R}_x = (\mpp, \odot_x)$ be the ring of $1$-jets of functions at $x\in \R^n$.

Let $E \subset \R^n$ and $P_x \in \mathcal{R}_x$ for all $x \in E$; then we say $(P_x)_{x \in E} \subset \mpp$ is a Whitney field on $E$. Let $Wh(E)$ be the set of all Whitney fields on $E$.

For $\gamma_x \in \mathcal{R}_x$, $\gamma_y \in \mathcal{R}_y$, we will say $\gamma_x  \sim_M \gamma_y$ if the following inequalities are satisfied:
\begin{align}
\gamma_x(x) -\gamma_y(x) &\geq \frac{1}{2M} |\nabla \gamma_x - \nabla \gamma_y|^2 \label{a} \\ 
\gamma_y(y)-\gamma_x (y) &\geq \frac{1}{2M} |\nabla \gamma_x - \nabla \gamma_y|^2.  \label{b} 
\end{align}

We write $c, C, C',$ etc. to denote constants dependent only on the dimension $n$. 

\section{Technical Tools} \label{sec:tech}

Let $E \subset \R^n$ be compact, and let $f: E \to \R$. We now introduce certain convex subsets of $\mathcal{R}_x$ that reflect constraints on the jet of a convex extension of $f$. 
For $S \subset E$ and $x \in S$, let $\Gamma^0(x;f), \; \Gamma^S(x;f) $, and $\Gamma^S_\eta(x;f) \subset \mathcal{R}_x$ be 
\begin{equation}\label{Gamma_summary}
\begin{aligned}
&\Gamma^0(x;f) := \{ P \in \mathcal{R}_x: P(x) = f(x) \}, \text{ and}\\
&\Gamma^{S}(x;f) :=\{ P \in \Gamma^0(x;f): P(y) \leq f(y) \text{ for all } y \in S\}\\
&\Gamma_\eta^{S}(x;f)=\{ P \in \Gamma^0(x;f): P(y) + \frac{\eta}{2}|y-x|^2\leq f(y) \text{ for all } y \in S \setminus \{x\}\}\\
& \quad \quad \quad = \bigcap_{y \in S \setminus \{x\}} \{ P \in \Gamma^0(x;f): P(y) + \frac{\eta}{2}|y-x|^2 \leq f(y) \}.
\end{aligned}
\end{equation}
Where the function $f$ is evident, we will not write $f$; i.e., we write ${\Gamma}^0(x)$ in place of ${\Gamma}^0(x;f)$.

Any convex extension of the function $f$, $F: \R^n \to \R$, satisfies $\partial F(x) \subset \Gamma^E(x;f)$ for all $x \in E$, where $\partial F(x):= \{ \xi \in\R^n: F(y)\geq F(x)+\langle \xi ,y-x\rangle \text{ for all } y\in \Omega \}$ is the \emph{subdifferential of $F$ at $x$}.

The sets $\Gamma^0(x), \; \Gamma^S(x) $, and $\Gamma^S_\eta(x)$ are convex subsets of $\mathcal{R}_x$; this property is immediate for $\Gamma^0(x)$. To see the set ${\Gamma}^S(x)$ is convex, notice if $P \in \{ P \in \Gamma^0(x): P(y) \leq f(y) \}$ for $y \in S \setminus \{x\}$, then $P(x) = f(x)$ and $\nabla P$ satisfies the linear inequality $f(x) + \langle \nabla P, y-x \rangle \leq f(y)$. Hence, $\{ P \in \Gamma^0(x): P(y) \leq f(y) \}$ is convex, implying $\Gamma^S(x) = \bigcap_{y \in S \setminus \{x\}} \{ P \in \Gamma^0(x): P(y)  \leq f(y) \}$ is convex. Similarly, if $P \in \{ P \in \Gamma^0(x): P(y) + \frac{\eta}{2}|y-x|^2 \leq f(y) \}$ for $y \in S \setminus \{x\}$, then $P(x) = f(x)$ and $\nabla P$ satisfies the linear inequality $f(x) + \langle \nabla P, x-y \rangle + \frac{\eta}{2}|y-x|^2 \leq f(y)$, implying ${\Gamma}_\eta^S(x)$ is convex as an intersection of convex sets.

\begin{lem} \label{lemlip}
    Let $E \subset \R^n$ be compact and $f:E \to \R$. Suppose $\Gamma^E(x;f) \neq \emptyset$ for all $x \in E$. Then there exists a convex (and thus, locally Lipschitz) function $F: \R^n \to \R$ extending $f$.
\end{lem}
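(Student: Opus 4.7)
My plan is to construct $F$ as an explicit supremum of affine polynomials coming from the hypothesis, and then verify that this supremum is finite-valued. By hypothesis, for each $x \in E$ I may select some $P_x \in \Gamma^E(x;f)$, i.e., an affine function with $P_x(x) = f(x)$ and $P_x(y) \leq f(y)$ for every $y \in E$. Set
\[
F(z) := \sup_{x \in E} P_x(z), \qquad z \in \R^n.
\]
As a supremum of affine functions, $F$ is convex on $\R^n$ (a priori valued in $\R \cup \{+\infty\}$). The identity $F|_E = f$ follows from a standard sandwich: for $x \in E$, evaluating the supremum at index $y = x$ yields $F(x) \geq P_x(x) = f(x)$, while $P_y(x) \leq f(x)$ for every $y \in E$ by definition of $\Gamma^E(y;f)$, giving $F(x) \leq f(x)$.

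The remaining task is to verify that $F$ is real-valued on $\R^n$; once finiteness is established, local Lipschitz continuity is automatic from the standard fact that real-valued convex functions on $\R^n$ are locally Lipschitz. I would reduce finiteness to a uniform gradient bound: if $L := \sup_{x \in E}|\nabla P_x| < \infty$, then
\[
F(z) \leq \sup_{x \in E}\bigl(f(x) + L|z-x|\bigr) < \infty
\]
for every $z \in \R^n$, using that $f$ is bounded on the compact set $E$ (which itself can be inferred from such a uniform gradient bound together with the relation $f(x) = P_x(x)$). To obtain such a bound, I would select each $P_x$ to have minimal gradient norm within $\Gamma^E(x;f)$---the closest-point projection of the origin onto the closed convex set $\{\nabla P : P \in \Gamma^E(x;f)\} \subset \R^n$---and then argue, using the compactness of $E$ together with the hypothesis, that this minimum is uniformly bounded.

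The main obstacle is establishing the uniform bound $\sup_{x \in E}\inf_{P \in \Gamma^E(x;f)}|\nabla P|<\infty$. My approach would be a compactness/continuity argument: if the bound failed along a sequence $x_k \to x^* \in E$, I would aim for a contradiction by starting from a fixed $P^* \in \Gamma^E(x^*;f)$ and building, via the constant shift $P^* + (f(x_k) - P^*(x_k))$, a bounded-gradient near-candidate in $\Gamma^E(x_k;f)$ for large $k$. If that direct route hits obstructions (the constant shift does not automatically preserve every underestimator inequality), a fallback is to first build the convex envelope $\hat f(z) := \inf\{\sum_i \lambda_i f(x_i) : z = \sum_i \lambda_i x_i,\ x_i \in E,\ \lambda_i \geq 0,\ \sum_i \lambda_i = 1\}$ on the compact convex set $\mathrm{conv}(E)$ (finite-valued and convex there, with $\hat f|_E = f$ guaranteed precisely by the hypothesis), and then extend $\hat f$ from $\mathrm{conv}(E)$ to $\R^n$ by taking the supremum of bounded-slope affine minorants of $\hat f$ on $\mathrm{conv}(E)$, producing a real-valued convex function on $\R^n$ that agrees with $f$ on $E$.
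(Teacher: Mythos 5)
Your construction---$F := \sup_{x\in E} P_x$ with $P_x \in \Gamma^E(x;f)$, plus the two-sided sandwich argument for $F|_E = f$---is exactly the paper's proof, which simply writes $F(z) := \sup_{y\in E}\{P_y(z) : P_y \in \Gamma^E(y)\}$ and asserts convexity and agreement on $E$. You are right, and more careful than the paper, to worry about finiteness of this supremum. The paper passes over it in silence, but it is a genuine issue: if one literally takes the supremum over \emph{all} $P_y\in\Gamma^E(y)$, then $F\equiv+\infty$ off of $E$ already when $E$ is a single point; and even for a single choice of $P_x$ per $x$, there is no a priori bound on $|\nabla P_x|$ when $E$ is not full-dimensional (for $E$ contained in an affine hyperplane, a whole component of $\nabla P_x$ is unconstrained by the definition of $\Gamma^E(x;f)$). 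So your instinct to select the minimal-norm element of $\{\nabla P : P\in\Gamma^E(x;f)\}$ is the right repair of the construction.

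That said, both of your routes stop short of closing the actual gap, which is establishing $\sup_{x\in E}\inf\{|\nabla P| : P\in\Gamma^E(x;f)\}<\infty$. The constant-shift device fails for precisely the reason you flag: $P^* + (f(x_k)-P^*(x_k))$ shifts $P^*$ upward and therefore need not remain a minorant of $f$ on $E$. And the fallback---forming the convex envelope $\hat f$ on $\mathrm{conv}(E)$ and then taking the supremum of bounded-slope affine minorants of $\hat f$---quietly re-encounters the same question: for that supremum to recover $\hat f$ (hence $f$) at every point of $E$, you need a bounded-slope supporting affine function of $\hat f$ at each $x\in E$, which is exactly the uniform gradient bound you were trying to establish, now phrased in terms of $\hat f$ rather than $f$. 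So the proposal is the paper's approach with a correctly diagnosed but not yet proved finiteness step; a complete proof still needs an argument that the hypothesis $\Gamma^E(x;f)\neq\emptyset$ for all $x$ in the \emph{compact} set $E$ forces such a uniform slope bound (or an alternative extension of $\hat f$ past $\mathrm{conv}(E)$ that does not presuppose it).
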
 
\begin{proof}
    Let $F: \R^n \to \R$ be $F(x):= \sup_{y \in E} \{P_y(x): P_y \in \Gamma^E(y) \}$; then $F|_E=f|_E$ and as the supremum of convex functions, $F$ is convex. 
\end{proof}

\begin{lem} 
Let $E \subset \R^n$ be compact and $f: E \to \R$. Suppose $\Gamma^E_\eta(x;f) \neq \emptyset$ for all $x \in E$; then there exists an $\eta$-strongly convex function $F:\R^n \to \R$ extending $f$.
\end{lem}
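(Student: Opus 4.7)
The plan is to reduce to Lemma \ref{lemlip} by a quadratic shift. Define $g: E \to \R$ by $g(y) := f(y) - \tfrac{\eta}{2}|y|^2$. The first step is to show that the hypothesis $\Gamma_\eta^E(x;f) \neq \emptyset$ for every $x \in E$ forces $\Gamma^E(x;g) \neq \emptyset$ for every $x \in E$. Given $P \in \Gamma_\eta^E(x;f)$, define the affine polynomial $\tilde P$ by $\tilde P(y) := P(y) - \eta\langle x, y\rangle + \tfrac{\eta}{2}|x|^2$. Expanding $\tfrac{\eta}{2}|y-x|^2 = \tfrac{\eta}{2}|y|^2 - \eta\langle x, y\rangle + \tfrac{\eta}{2}|x|^2$ in the defining inequality $P(y) + \tfrac{\eta}{2}|y-x|^2 \leq f(y)$ gives $\tilde P(y) \leq f(y) - \tfrac{\eta}{2}|y|^2 = g(y)$ for all $y \in E$, and evaluation at $y=x$ yields $\tilde P(x) = f(x) - \tfrac{\eta}{2}|x|^2 = g(x)$. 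Hence $\tilde P \in \Gamma^E(x;g)$, so the latter set is nonempty.

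With $\Gamma^E(x;g) \neq \emptyset$ for every $x \in E$, Lemma \ref{lemlip} produces a convex extension $G: \R^n \to \R$ of $g$. I would then set $F(y) := G(y) + \tfrac{\eta}{2}|y|^2$. This $F$ extends $f$ since $F|_E = g|_E + \tfrac{\eta}{2}|\cdot|^2\big|_E = f|_E$, and $F$ is $\eta$-strongly convex by definition because $F - \tfrac{\eta}{2}|\cdot|^2 = G$ is convex.

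There is no substantive obstacle; the only arithmetic to get right is the expansion of $|y-x|^2$ that converts the strong-convexity constraint defining $\Gamma_\eta^E(x;f)$ into the plain-convexity constraint defining $\Gamma^E(x;g)$. The construction above parallels the direct imitation of the proof of Lemma \ref{lemlip}, which would set $F(x) := \sup\{ P_y(x) + \tfrac{\eta}{2}|x-y|^2 : y \in E,\ P_y \in \Gamma_\eta^E(y;f)\}$; strong convexity of this $F$ follows from the same expansion, which shows that $F(x) - \tfrac{\eta}{2}|x|^2$ is a supremum of functions affine in $x$.
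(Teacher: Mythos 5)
Your proof is correct and takes essentially the same route as the paper: define $g := f - \tfrac{\eta}{2}|\cdot|^2$, observe that $\Gamma^E_\eta(x;f)\neq\emptyset$ implies $\Gamma^E(x;g)\neq\emptyset$, invoke Lemma \ref{lemlip} to get a convex extension $G$ of $g$, and set $F := G + \tfrac{\eta}{2}|\cdot|^2$. The only difference is that you spell out the affine shift $\tilde P$ witnessing the implication between the two $\Gamma$ sets, which the paper states without detail.
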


\begin{proof}
   Suppose $\Gamma^E_\eta(x;f)  \neq \emptyset$ for all $x \in E$, then $\Gamma^E(x;g) \neq \emptyset$ for all $x \in E$, where $g(x): = f(x) - \frac{\eta}{2}|x|^2$. By Lemma \ref{lemlip} there exists a convex function $G: \R^n \to \R$ satisfying $G|_E=g|_E$. Thus, $F(x):=G(x) + \frac{\eta}{2} |x|^2$ is $\eta$-strongly convex and satisfies $F|_E=f|_E$. 
\end{proof}

\begin{lem} \label{necsc}
    Let $E \subset \R^n$ be compact and $f: E \to \R$. Let $S \subset E$ and $F \in C^{1,1}_c(\R^n)$ be an $\eta$-strongly convex function satisfying $F|_S =f|_S$. Then $J_xF \in \Gamma^S_\eta(x;f)$ for all $x \in S$.
\end{lem}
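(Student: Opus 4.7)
The plan is to verify directly the two defining conditions of $\Gamma^S_\eta(x;f)$ for the jet $J_xF$, using only the definition of strong convexity and the fact that $F$ interpolates $f$ on $S$. Fix $x \in S$. Since $F(x) = f(x)$ and $(J_xF)(x) = F(x) + \langle \nabla F(x), x-x\rangle = F(x)$, the ``basepoint'' condition $(J_xF)(x) = f(x)$, i.e.\ $J_xF \in \Gamma^0(x;f)$, is immediate.

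For the inequality condition, I would invoke the definition of $\eta$-strong convexity: the auxiliary function $G(z):= F(z) - \tfrac{\eta}{2}|z|^2$ is convex and, since $F \in C^{1,1}_c(\R^n)$, also differentiable with $\nabla G(z) = \nabla F(z) - \eta z$. Applying the supporting-hyperplane inequality for the convex differentiable function $G$ at the point $x$, for any $y \in \R^n$,
\[
G(y) \;\geq\; G(x) + \langle \nabla G(x), y - x\rangle.
\]
Substituting and rearranging gives
\[
F(y) \;\geq\; F(x) + \langle \nabla F(x), y-x\rangle + \tfrac{\eta}{2}\bigl(|y|^2 - |x|^2 - 2\langle x, y-x\rangle\bigr) \;=\; (J_xF)(y) + \tfrac{\eta}{2}|y-x|^2,
\]
where the last equality is the standard expansion $|y|^2 - |x|^2 - 2\langle x, y-x\rangle = |y-x|^2$.

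Finally, restricting to $y \in S\setminus\{x\}$ and using $F|_S = f|_S$ converts the left-hand side to $f(y)$, yielding $(J_xF)(y) + \tfrac{\eta}{2}|y-x|^2 \leq f(y)$ for every $y \in S \setminus \{x\}$. Combined with $(J_xF)(x) = f(x)$, this is precisely $J_xF \in \Gamma^S_\eta(x;f)$. There is no real obstacle here: the lemma is essentially a restatement of the first-order characterization of $\eta$-strong convexity, and the proof is a two-line computation once that characterization is written down.
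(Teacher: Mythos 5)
Your proof is correct and follows essentially the same route as the paper's: both pass to the auxiliary function $G(z) = F(z) - \tfrac{\eta}{2}|z|^2$, use the first-order convexity inequality $J_xG(y) \leq G(y)$, and rearrange to obtain $J_xF(y) + \tfrac{\eta}{2}|y-x|^2 \leq f(y)$. The only minor difference is that you spell out the basepoint condition $(J_xF)(x) = f(x)$ explicitly, which the paper leaves implicit.
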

\begin{proof}
    Let $G: \R^n \to \R$ be $G(x):= F(x) - \frac{\eta}{2}|x|^2$. Because $F$ is $\eta$-strongly convex, $G$ is convex, implying for all $x,y \in S$, $J_xG(y) \leq G(y)$; equivalently,
    \begin{align*}
        F(x) - \frac{\eta}{2}|x|^2 + \langle \nabla F(x) - \eta x, y-x \rangle \leq F(y) - \frac{\eta}{2}|y|^2 \quad (x,y \in S).
    \end{align*}
    This reduces to $J_xF(y) +\frac{\eta}{2}|y-x|^2 \leq F(y)=f(y)$, implying $J_xF \in \Gamma^S_\eta(x;f)$ for all $x \in S$.
\end{proof}

\subsection{An Estimate on $1$-Jets of Convex Functions in $C_c^{1,1}$} \label{sec:cw11}

Recall that $B(a,r)$ is the Euclidean ball of radius $r$ centered at $a$: $B(a,r): = \{ x \in \R^n: |x-a| < r \}$. 
For a function $F \in C^{1,1}(B(a,2R))$, we use \underline{Taylor's inequality} to bound the difference between the function value and its jet evaluated at a point:
\begin{align}
|F(z) - J_xF(z)| \leq \frac{1}{2}{\Lip(\nabla F; B(a,2R))}|x-z|^2 \quad \quad (x,z \in B(a,2R)), \label{TI}
\end{align}
where $\Lip (\nabla F; B(a,2R)) := \sup_{x \neq y, x,y \in B(a,2R)} \left\{ \frac{|\nabla F(x) - \nabla F(y)|}{|x-y|}\right\}$.  We use \eqref{TI} in the following estimate on the jets of a convex function $F \in C^{1,1}_c(B(a,2R))$.

\begin{lem} \label{neclem}
    Let $F \in C^{1,1}_c(B(a,2R))$ be convex; then $J_xF \sim_{M} J_yF$ for all $x,y \in B(a,R)$, where $M=\Lip(\nabla F; B(a,2R))$.
\end{lem}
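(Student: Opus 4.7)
The plan is to adapt the standard proof of the Baillon--Haddad-type cocoercivity inequality (familiar on $\R^n$) to the present localized setting of the ball $B(a,2R)$. By the symmetry of the definition of $\sim_M$ under $(x,y) \leftrightarrow (y,x)$, it suffices to establish inequality~(\ref{a}); inequality~(\ref{b}) then follows by interchanging the roles of $x$ and $y$ throughout.

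The key device is the auxiliary function $g(z) := F(z) - \langle \nabla F(y), z\rangle$ on $B(a,2R)$. Being the sum of a convex function and an affine one, $g$ is convex and $C^{1,1}$ on $B(a,2R)$ with $\Lip(\nabla g; B(a,2R)) = M$, and $\nabla g(z) = \nabla F(z) - \nabla F(y)$, so $\nabla g(y) = 0$. Convexity of $g$ on the convex open set $B(a,2R)$ then forces $y$ to be a global minimizer of $g$ there, i.e., $g(y) \leq g(z)$ for every $z \in B(a,2R)$. Rewriting (\ref{a}) in terms of $g$, the goal becomes $g(x) - g(y) \geq \tfrac{1}{2M}|\nabla g(x)|^2$.

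To produce this lower bound, I would apply Taylor's inequality~(\ref{TI}) to $g$ at the point $x$:
\[ g(z) \leq g(x) + \langle \nabla g(x), z - x\rangle + \tfrac{M}{2}|z - x|^2 \quad\text{for}\quad z \in B(a,2R). \]
The right-hand side is a convex quadratic in $z$, minimized over $\R^n$ at the test point $z^\star := x - \nabla g(x)/M$, whose value there is $g(x) - \tfrac{1}{2M}|\nabla g(x)|^2$. If $z^\star$ lies in $B(a,2R)$, the chain $g(y) \leq g(z^\star) \leq g(x) - \tfrac{1}{2M}|\nabla g(x)|^2$ closes the argument.

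The main obstacle is ensuring that $z^\star \in B(a,2R)$ so that Taylor is applicable there. Since $|z^\star - a| \leq |x-a| + |\nabla g(x)|/M < R + |x-y|$, feasibility is automatic when $|x-y| < R$. For the remaining regime $R \leq |x-y| < 2R$, I would split the segment joining $x$ and $y$ at its midpoint $m := \tfrac{1}{2}(x+y)$; by convexity of $B(a,R)$, $m$ lies in $B(a,R)$, and both $|x-m|$ and $|m-y|$ equal $|x-y|/2 < R$, so the base-case argument applies to the pairs $(x,m)$ and $(m,y)$. The two resulting instances of (\ref{a}) can then be recombined via the identity
\[ F(x) - J_yF(x) = \bigl[F(x) - J_mF(x)\bigr] + \bigl[F(m) - J_yF(m)\bigr] + \langle \nabla F(m) - \nabla F(y),\, x-m\rangle, \]
together with the cocoercivity of $\nabla F$ on $(m,y)$ (obtained by applying (\ref{a}) and (\ref{b}) jointly on that pair) to deliver (\ref{a}) for $(x,y)$ itself. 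Carrying the constant $\tfrac{1}{2M}$ exactly through this patching is the most delicate piece of the argument.
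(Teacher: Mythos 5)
Your base case (applying Taylor's inequality~(\ref{TI}) to $g := F - J_yF$ at the test point $z^\star = x - \nabla g(x)/M$) is, modulo the contrapositive phrasing, the same argument the paper gives, adapted from Azagra--Le Gruyer--Mudarra. You are right to flag the feasibility of $z^\star$ inside $B(a,2R)$: the paper's line ``Hence, for $x \in B(a,R)$, $(x-\nabla F(x)/M)\in B(a,2R)$'' asserts this without verification, and the naive bound $|z^\star - a| \leq |x-a| + |\nabla F(x) - \nabla F(y)|/M < R + |x-y|$ only lands in $B(a,3R)$ when $|x-y| \geq R$. So you have spotted a genuine subtlety that the paper quietly glosses over.

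The midpoint patch, however, does not close. After applying the base case on $(x,m)$ and $(m,y)$ and plugging into your identity, you get
\[
F(x) - J_yF(x) \geq \frac{1}{2M}|u|^2 + \frac{1}{2M}|v|^2 + \langle v, x-m\rangle,
\qquad u := \nabla F(x) - \nabla F(m),\; v := \nabla F(m) - \nabla F(y),
\]
and to reach $\frac{1}{2M}|u+v|^2$ you still need $\langle v, x-m\rangle \geq \frac{1}{M}\langle u,v\rangle$, i.e.\ $\langle v, (x-m) - u/M\rangle \geq 0$. Cocoercivity on $(m,y)$ gives only $\langle v, x-m\rangle \geq \frac{1}{M}|v|^2$, which suffices precisely when $\langle u,v\rangle \leq |v|^2$ --- and that can fail. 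In $n=1$ the missing sign is forced for free (both $v \geq 0$ and $(x-m) - u/M \geq 0$ by Lipschitz-ness and monotonicity of $F'$), but in $n>1$ nothing pins down the sign of $\langle v, (x-m) - u/M\rangle$; this is exactly the non-transitivity obstruction the paper itself records in the Remark after Lemma~\ref{translm}. Your comment that carrying $1/(2M)$ through the patch is ``the most delicate piece'' understates the problem: as written, the combination does not deliver~(\ref{a}) for $n>1$. Resolving the regime $|x-y|\geq R$ needs a different device --- for instance, stating the lemma with Lipschitz control on a larger concentric ball so that $z^\star$ is guaranteed to stay in range, or exploiting the convexity of $F$ along the segment $[y,x]$ rather than splitting it.
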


\begin{proof}
    We adapt the proof of Proposition 3.2 in \cite{az1}. For $F$ affine, the result is immediate. Suppose $F$ is not affine. Let $M: = \Lip(\nabla F; B(a,2R))$, and suppose there exist $x, y \in B(a,R)$ such that $J_xF \not\sim_M J_yF$. Then without loss of generality, we can assume
    \begin{align}
        F(x) - F(y) - \langle\nabla F(y), x-y \rangle <  \frac{1}{2M}|\nabla F(x) - \nabla F(y)|^2. \label{nothold}
    \end{align}
    By translation (by $y$) and subtraction of an affine function ($z \mapsto F(y)+ \nabla F(y) (z-y)$), we can assume $y = 0 \in B(a,R)$, $F(y) = 0$, and $\nabla F(y) = 0$. Because $F$ is convex, this implies $F(z) \geq 0$ for $z \in B(a,2R)$, and (\ref{nothold}) becomes
    \begin{align*}
        0 \leq F(x) < \frac{1}{2M}|\nabla F(x)|^2.
    \end{align*}
    In particular, $\nabla F(x)  \neq 0$. Because $\Lip(\nabla F;B(a,2R)) = M$, we have $|\nabla F(x)| = |\nabla F(x) - \nabla F(0)| \leq M|x|$. Hence, for $x \in B(a,R)$, $\big(x-\nabla F(x)/M \big)\in B(a,2R)$. From \eqref{TI} evaluated at $z=x-\frac{\nabla F(x)}{M}$ and the previous inequality,
    \begin{align*}
        F\left(x-\nabla F(x)/M\right) &\leq F(x) + \left\langle \nabla F(x), x-\nabla F(x)/M - x \right\rangle + \frac{M}{2}\left|x-\nabla F(x)/M-x\right|^2\\
        &=F(x) -|\nabla F(x)|^2/M+|\nabla F(x)|^2/(2M)\\
        &< \left(\frac{1}{2M}-\frac{1}{M}+\frac{1}{2M} \right) |\nabla F(x)|^2 <0,
    \end{align*}
    but this contradicts our deduction that $F(z)\geq 0$ for $z \in B(a,2R) \setminus \{ 0 \}$. Thus, the lemma holds.
\end{proof}
For a convex function $F \in C^{1,1}_c(\R^n)$, we have $\Lip(\nabla F;\R^n) < \infty$, implying the following:

\begin{cor} \label{neccor}
    Let $F \in C_c^{1,1}(\R^n)$ be convex; then $J_xF \sim_{M} J_yF$ for all $x,y \in \R^n$, where $M=\Lip(\nabla F; \R^n)$.
\end{cor}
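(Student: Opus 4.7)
My plan is to reduce the global statement to the local estimate already proved in Lemma \ref{neclem} by choosing a ball that contains both points $x$ and $y$, while noting that the local Lipschitz constant is bounded by the global one.

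Given arbitrary $x,y \in \R^n$, I would set $a := (x+y)/2$ and $R := |x-y|$ (or any radius strictly larger than $|x-y|/2$, to ensure containment in the open ball). Then $|x - a| = |y - a| = |x-y|/2 < R$, so $x, y \in B(a,R)$. Since $B(a, 2R) \subset \R^n$ and $F \in C^{1,1}_c(\R^n)$, the restriction of $F$ to $B(a,2R)$ is a convex function in $C^{1,1}_c(B(a,2R))$, and its local Lipschitz constant satisfies
\begin{align*}
M' := \Lip(\nabla F; B(a,2R)) \leq \Lip(\nabla F; \R^n) = M.
\end{align*}

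Applying Lemma \ref{neclem} to $F$ on $B(a,2R)$ yields $J_xF \sim_{M'} J_yF$, i.e.,
\begin{align*}
F(x) - J_yF(x) &\geq \frac{1}{2M'}|\nabla F(x) - \nabla F(y)|^2,\\
F(y) - J_xF(y) &\geq \frac{1}{2M'}|\nabla F(x) - \nabla F(y)|^2.
\end{align*}
Since $M' \leq M$, we have $1/(2M') \geq 1/(2M)$, so these inequalities remain valid with $M'$ replaced by $M$. This gives $J_xF \sim_M J_yF$, as desired.

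No step here is difficult: the only small point requiring attention is choosing $R$ so that the open ball $B(a,R)$ contains both $x$ and $y$ (hence the choice $R = |x-y|$ rather than $R=|x-y|/2$), and observing that weakening the constant from $M'$ to $M \geq M'$ preserves the inequalities defining $\sim_M$.
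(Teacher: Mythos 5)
Your proof is correct and takes essentially the same route the paper intends: apply Lemma \ref{neclem} on a ball large enough to contain both $x$ and $y$, then pass from the local constant $M' = \Lip(\nabla F; B(a,2R))$ to the global constant $M \geq M'$ by noting that enlarging the constant only weakens the inequalities defining $\sim_M$. The only micro-point you could add is the degenerate case $x=y$, where your ball has radius $R=0$; there the relation $J_xF \sim_M J_xF$ reads $0 \geq 0$ and is trivially true, so no appeal to the lemma is needed.
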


\subsection{Estimates for $C_c^{1,1}$-Convex Extension of $1$-Jets}

Let $E \subset \R^n$ be closed, and suppose the Whitney field $(\gamma_x)_{x \in E}$ satisfies $\gamma_x \sim_M \gamma_y$ for all $x, y \in E$. Then $(\gamma_x)_{x \in E}$ satisfies the hypotheses of Theorem \ref{azagra0}, and we deduce there exists a convex function $F \in C^{1,1}_c(\R^n)$ satisfying $J_xF = \gamma_x$ for all $x \in E$ and $\Lip(\nabla F) \leq M$.

Under the same hypotheses (i.e., that the Whitney field $(\gamma_x)_{x \in E}$ satisfies $\gamma_x \sim_M \gamma_y$ for all $x, y \in E$), we can add (\ref{a}) to (\ref{b}) and apply the Cauchy-Schwartz inequality to deduce 
\begin{align*}
    |\nabla \gamma_y - \nabla \gamma_x|| y-x| \geq \langle \nabla \gamma_y - \nabla \gamma_x, y-x \rangle \geq \frac{1}{M}|\nabla \gamma_y - \nabla \gamma_x|^2,
\end{align*}
and thus, $|\nabla \gamma_y - \nabla \gamma_x| \leq M|y-x|$. The non-negativity of (\ref{b}) implies
\begin{align*}
    \gamma_x(x) - \gamma_y(x) \leq \big(\gamma_x(x) - \gamma_y(x)\big) + \big( \gamma_y(y) - \gamma_x(y) \big) = \langle \nabla \gamma_y - \nabla \gamma_x, y-x \rangle \leq M|y-x|^2.
\end{align*}
Similarly, from the non-negativity of (\ref{a}), we see if $\gamma_x \sim_M \gamma_y$,
\begin{align}
\gamma_y(y) - \gamma_x(y) \leq M|y-x|^2. \label{wh1}    
\end{align}
This implies the following:
\begin{remark}
    If the Whitney field $(\gamma_x)_{x \in E}$ satisfies 
$\gamma_x \sim_M \gamma_y$ for all $x,y \in E$, and $\sup_{x \in E} \{|\gamma_x(x)|\} + \sup_{x \in E} \{ |\nabla \gamma_x| \} \leq M $, the polynomials $(\gamma_x)_{x \in E} \in Wh(E)$ satisfy the hypotheses of Whitney's Extension Theorem for $C^{1,1}(\R^n)$ with a constant $M$, implying there exists a function $G \in C^{1,1}(\R^n)$ satisfying $J_xG = \gamma_x$ for all $x \in E$, and $\|G\|_{C^{1,1}(\R^n)} \leq C(n)M$. But the function $G$ need \underline{not} be convex (see e.g. \cite{st} for this version of Whitney's Extension Theorem).
\end{remark}

Next we construct an example of constants $M \geq \eta>0$, a set $E \subset \R$, a function $f:E \to \R$, and a choice of Whitney field $(\gamma_x)_{x \in E}  \in Wh(E)$ satisfying $\gamma_x \in \Gamma^E_\eta(x)$ and $\gamma_x \sim_M \gamma_y$ for all $x,y \in E$, such that there does \underline{not} exist an $\eta$-strongly convex function $F \in C^{1,1}_c(\R)$ satisfying $\Lip(\nabla F) \leq M$ and $J_xF = \gamma_x$ for all $x \in E$.

\textbf{Example:} Let $E = \{0,1\} \subset \R$, $\eta  \in (0,1/4)$, $M=1$, $f(0)=0$ and $f(1)=\eta/2$. Then the polynomials $\gamma_0 \in \mathcal{R}_0, \gamma_1 \in \mathcal{R}_1$ defined as $\gamma_0(x) = 0$, and $\gamma_1(x) = \frac{\eta}{2} + 2\eta(x-1)$ satisfy
\begin{align}
    &\gamma_0(0) - \gamma_1(0) = \frac{3 \eta}{2} \geq \frac{\eta}{2}|1-0|^2, \label{i1}\\
    & \gamma_0(0) - \gamma_1(0) = \frac{3 \eta}{2} \geq \frac{1}{2}|2\eta-0|^2, \label{i2}\\
    &\gamma_1(1) - \gamma_0(1) = \frac{\eta}{2} = \frac{\eta}{2} |1-0|^2, \text{ and} \label{i3}\\
    &\gamma_1(1) - \gamma_0(1) = \frac{\eta}{2}\geq \frac{1}{2}|2\eta-0|^2. \label{i4}
\end{align}
Inequality (\ref{i1}) implies $\gamma_0 \in \Gamma^E_\eta(0)$; (\ref{i3}) implies $\gamma_1 \in \Gamma^E_\eta(1)$; and together (\ref{i2}) and (\ref{i4}) imply $\gamma_0 \sim_1 \gamma_1$. 

Notice any $\eta$-strongly convex extension of $\gamma_0$ must lie above the function $g: \R \to \R$ defined as $g(x) := \frac{\eta}{2}|x|^2$. But $g(1) = \gamma_1(1) = \frac{\eta}{2}$ and $g'(1) = \eta < 2 \eta = \nabla \gamma_1$. Thus, there is no $\eta$-strongly convex function $F \in C^{1,1}_c(\R^n)$ satisfying $J_0F=\gamma_0 $, $ J_1F= \gamma_1$. However, the following proposition implies that there is an $\eta/p$-strongly convex function satisfying these conditions, for any $p>1$.


\begin{prop} \label{scprop}
    Let $E^* \subset \R^n$ be closed, $f^*: E^* \to \R$, and $M \geq \eta > 0$. Suppose $(\gamma_x)_{x \in E^*}$ satisfies $\gamma_x \in \Gamma^{E^*}_\eta(x; f^*)$ and $\gamma_x \sim_M \gamma_y$ for all $x,y \in E^*$. Let $p,q \in (1, \infty)$ satisfy $\frac{1}{p}+\frac{1}{q}=1$. Then there exists an ${\eta/p}$-strongly convex function $F \in C^{1,1}_c(\R^n)$ satisfying $J_xF = \gamma_x$ for all $x \in E^*$ and $\Lip(\nabla F) \leq qM +{\eta/p} \leq (q+1)M$.
\end{prop}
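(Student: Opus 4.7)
The plan is to reduce Proposition \ref{scprop} to Theorem \ref{azagra0} by subtracting from each $\gamma_x$ the jet of the quadratic $\phi(y):=\tfrac{\eta}{2p}|y|^2$. Concretely, for $x\in E^*$ define the shifted jet
\[
\tilde\gamma_x(y) := \gamma_x(y) - J_x\phi(y) = \gamma_x(y) - \tfrac{\eta}{2p}|x|^2 - \tfrac{\eta}{p}\langle x,\,y-x\rangle,
\]
so that $\nabla\tilde\gamma_x = \nabla\gamma_x - (\eta/p)x$. If a convex $G \in C^{1,1}_c(\R^n)$ exists with $J_xG=\tilde\gamma_x$ on $E^*$ and $\Lip(\nabla G)\leq qM$, then $F := G + \phi$ is $(\eta/p)$-strongly convex, satisfies $J_xF = \gamma_x$ for all $x\in E^*$, and obeys $\Lip(\nabla F) \leq qM + \eta/p \leq (q+1)M$ (the last step using $\eta \leq M$). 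By Theorem \ref{azagra0}, producing $G$ reduces to verifying $\tilde\gamma_x \sim_{qM} \tilde\gamma_y$ for all $x,y \in E^*$.

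That verification is the heart of the proof. Set $u := \nabla\gamma_x - \nabla\gamma_y$, $w := x-y$, $a := \eta/p$. A short expansion yields
\[
\tilde\gamma_x(x) - \tilde\gamma_y(x) = \bigl(\gamma_x(x) - \gamma_y(x)\bigr) - \tfrac{a}{2}|w|^2,\qquad \nabla\tilde\gamma_x - \nabla\tilde\gamma_y = u - aw.
\]
Three inputs are on hand: (i) $\gamma_x(x)-\gamma_y(x) \geq \tfrac{1}{2M}|u|^2$ from one side of $\gamma_x\sim_M\gamma_y$; (ii) $\gamma_x(x)-\gamma_y(x) \geq \tfrac{\eta}{2}|w|^2$ from evaluating the defining inequality of $\gamma_y\in\Gamma^{E^*}_\eta(y;f^*)$ at the point $x$, together with $\gamma_x(x)=f^*(x)$; and (iii) $\langle u,w\rangle \geq \tfrac{1}{M}|u|^2$ from summing both halves of $\gamma_x\sim_M\gamma_y$. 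Using (iii), $|u-aw|^2 \leq (1-2a/M)|u|^2 + a^2|w|^2$. Combining (i) and (ii) with weights $\alpha,\beta\geq 0$, $\alpha+\beta\leq 1$, and matching coefficients of $|u|^2$ and $|w|^2$ on both sides of the target inequality
\[
\bigl(\gamma_x(x)-\gamma_y(x)\bigr) - \tfrac{a}{2}|w|^2 \geq \tfrac{1}{2M'}|u-aw|^2
\]
reduces the problem to the single constraint $M'(\eta-a) \geq \eta M - 2\eta a + a^2$ (with a trivial sign check when $M < 2a$). Substituting $a=\eta/p$ and $\eta-a=\eta/q$, the conjugate-exponent identity $p+q = pq$ cancels the leading $M$-term and yields the equivalent constraint $M' \geq qM - \eta(q-1/q)$; since the correction is non-positive, $M' = qM$ suffices.

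The anticipated obstacle is tuning the convex combination: choosing $\alpha,\beta$ too naively (for instance $\alpha=1/q$, $\beta=1/p$) produces a bound like $q(M+\eta/p^2)$, larger than the proposition allows. The trick is to use (iii) to sharpen $|u-aw|^2$ \emph{before} weighting, so that the relation $1/p+1/q=1$ wipes out the $M$-term in the critical algebraic identity and no restriction beyond $p,q\in(1,\infty)$ is needed. Once $\tilde\gamma_x \sim_{qM} \tilde\gamma_y$ is established on $E^*\times E^*$, the remaining assembly is routine: Theorem \ref{azagra0} delivers $G$, and $F := G + \phi$ inherits $(\eta/p)$-strong convexity from convexity of $G$, the prescribed jets by construction, and the Lipschitz bound by the triangle inequality applied to $\nabla F = \nabla G + (\eta/p)\,\mathrm{id}$.
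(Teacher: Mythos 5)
Your proposal is correct and follows the same overall strategy as the paper --- subtract the jet of $\phi(y)=\tfrac{\eta}{2p}|y|^2$, verify $\sim_{qM}$ for the shifted jets, invoke Theorem~\ref{azagra0}, and add $\phi$ back --- but the key algebraic step is genuinely different. The paper (via Lemma~\ref{flexsc}) expands $\tfrac{1}{2qM}|u|^2 = \tfrac{1}{2qM}|u-aw|^2 - \tfrac{a^2}{2qM}|w|^2 + \tfrac{a}{qM}\langle u,w\rangle$ and bounds the cross term via $\langle u,w\rangle\geq\eta|w|^2$, which is the monotonicity inequality coming from the two $\Gamma_\eta$ memberships; it needs no case split. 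You instead bound $|u-aw|^2 \leq (1-2a/M)|u|^2 + a^2|w|^2$ using $\langle u,w\rangle\geq\tfrac{1}{M}|u|^2$ (your (iii), coming from adding both halves of $\gamma_x\sim_M\gamma_y$), then feed a convex combination of (i) and (ii) into the other side. Both routes terminate in the same identity $2p\geq 1$, and your constant $M'=qM$ matches the paper's; the price you pay is a (correctly flagged and indeed trivial) case split when $M<2\eta/p$, where the $|u|^2$ coefficient turns negative and can simply be dropped. One small omission: you only write out the inequality $\tilde\gamma_x(x)-\tilde\gamma_y(x)\geq\tfrac{1}{2qM}|\nabla\tilde\gamma_x-\nabla\tilde\gamma_y|^2$; the symmetric inequality at $y$ is also needed for $\sim_{qM}$, but since (i), (ii), (iii) are symmetric under swapping $x$ and $y$, it follows by the same computation, and the paper verifies both in exactly this way.
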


We will turn to the proof of Proposition \ref{scprop} momentarily. The following lemma explains the reduction in the strong convexity constant $\eta$ of an extension, under these hypotheses: 

\begin{lem} \label{flexsc}
Let $E \subset \R^n$ be closed, $f: E \to \R$, and $M \geq \eta > 0$. Suppose $(\gamma_x)_{x \in E}$ satisfies $\gamma_x \in \Gamma^E_\eta(x;f)$ and $\gamma_x \sim_M \gamma_y$ for all $x,y \in E$. Let $p,q \in (1, \infty)$ satisfy $\frac{1}{p}+\frac{1}{q}=1$. For $x \in E$, let $P_x \in \mathcal{R}_x$ satisfy $P_x(x) = f(x) - \frac{\eta}{2{p}}|x|^2$ and $\nabla P_x = \nabla \gamma_x - \frac{\eta}{{p}} x$. Then $P_x \sim_{qM} P_y$ for all $x,y \in E$.
\end{lem}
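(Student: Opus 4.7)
The plan is to verify the two inequalities $P_x \sim_{qM} P_y$ directly from the definitions, combining the two available lower bounds on $\gamma_x(x) - \gamma_y(x)$ with weights suggested by $\tfrac{1}{p}+\tfrac{1}{q}=1$, and then controlling the modified gradient difference via the strong-convexity inequalities. First I would compute, using that $P_y$ is affine with $P_y(y) = f(y) - \tfrac{\eta}{2p}|y|^2$ and $\nabla P_y = \nabla \gamma_y - \tfrac{\eta}{p}y$,
\[
P_x(x) - P_y(x) = \bigl[\gamma_x(x) - \gamma_y(x)\bigr] - \frac{\eta}{2p}|x-y|^2, \qquad \nabla P_x - \nabla P_y = u - v,
\]
where $u := \nabla \gamma_x - \nabla \gamma_y$ and $v := \tfrac{\eta}{p}(x-y)$.

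Next, the heart of the argument is a convex-combination estimate. By the hypothesis $\gamma_x \sim_M \gamma_y$ we have $\gamma_x(x) - \gamma_y(x) \geq \tfrac{1}{2M}|u|^2$, and by $\gamma_y \in \Gamma^E_\eta(y;f)$ evaluated at $x$ we have $\gamma_x(x) - \gamma_y(x) \geq \tfrac{\eta}{2}|x-y|^2$. Taking the convex combination of these two bounds with weights $\tfrac{1}{q}$ and $\tfrac{1}{p}$ gives
\[
\gamma_x(x) - \gamma_y(x) \geq \frac{1}{2qM}|u|^2 + \frac{\eta}{2p}|x-y|^2,
\]
and the $\tfrac{\eta}{2p}|x-y|^2$ term exactly cancels the penalty in the identity above, yielding $P_x(x) - P_y(x) \geq \tfrac{1}{2qM}|u|^2$.

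To conclude inequality \eqref{a} for $P_x \sim_{qM} P_y$, I would show $|u - v|^2 \leq |u|^2$. Adding the two strong-convexity inequalities $\gamma_x(y) + \tfrac{\eta}{2}|x-y|^2 \leq f(y)$ and $\gamma_y(x) + \tfrac{\eta}{2}|x-y|^2 \leq f(x)$ yields $\langle u, x-y\rangle \geq \eta|x-y|^2$, which after substituting $x - y = \tfrac{p}{\eta} v$ rewrites as $\langle u, v \rangle \geq p |v|^2$. Expanding,
\[
|u - v|^2 = |u|^2 - 2\langle u, v \rangle + |v|^2 \leq |u|^2 - (2p-1)|v|^2 \leq |u|^2,
\]
since $p > 1$. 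Combined with the previous paragraph this yields inequality \eqref{a}; inequality \eqref{b} follows by the symmetric argument with the roles of $x$ and $y$ swapped (which leaves $|u|^2$, $|v|^2$, $\langle u,v\rangle$, and $|u-v|^2$ unchanged).

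I do not expect any serious obstacle here: all inputs are immediate from the hypotheses and the structure is dictated by the algebra. The one insight that must be spotted is that the strong-convexity pair provides not just the pointwise bound on $\gamma_x(x) - \gamma_y(x)$ but also the vector inequality $\langle u, v \rangle \geq p|v|^2$, which is exactly what is needed to absorb the perturbation of the gradient when passing from $\gamma_x$ to $P_x$, so that the clean estimate $P_x(x) - P_y(x) \geq \tfrac{1}{2qM}|u|^2$ automatically upgrades to $P_x(x) - P_y(x) \geq \tfrac{1}{2qM}|\nabla P_x - \nabla P_y|^2$.
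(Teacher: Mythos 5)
Your proof is correct and follows essentially the same route as the paper's: the paper normalizes to $y=0$, $\gamma_y=0$ and writes out the same $\tfrac 1q$--$\tfrac 1p$ convex combination of the two lower bounds, with your $\langle u,v\rangle \geq p|v|^2$ appearing there as inequality (v5), $\langle\nabla\gamma_x,x\rangle\geq\eta|x|^2$. Your formulation in terms of $u$ and $v$ without the normalization is a touch cleaner but not a different argument.
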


\begin{proof}
Let $x,y \in E$. By translation (by $y$) and subtraction of the affine function $\gamma_y$, we can assume $y = 0$ and $\gamma_y = 0$. By hypothesis, $\gamma_x(x) = f(x)$; thus, $\gamma_x(z) = f(x) + \langle \nabla \gamma_x, z-x \rangle$. Then $\gamma_x \sim_M \gamma_y$ implies
\begin{align}
    &f(x) \geq \frac{1}{2M} |\nabla \gamma_x|^2, \text{ and} \label{v1} \\
    &-f(x) +\langle \nabla \gamma_x, x \rangle \geq \frac{1}{2M}|\nabla \gamma_x|^2. \label{v2}
\end{align}
The conditions $\gamma_x \in \Gamma^E_\eta(x)$, $\gamma_y \in \Gamma^E_\eta(y)$ imply 
\begin{align}
    &f(x) \geq \frac{\eta}{2}|x|^2, \label{v3} \\
    &-f(x) +\langle \nabla \gamma_x, x \rangle \geq \frac{\eta}{2}|x|^2,\text{ and} \label{v4} \\
    & |\nabla \gamma_x||x| \geq \langle \nabla \gamma_x,x \rangle \geq \eta|x|^2, \label{v5}
\end{align}
where the final inequality comes from adding (\ref{v3}) to (\ref{v4}) and applying the Cauchy Schwartz inequality to the inner product. We want to prove that $P_x \sim_{qM} P_y$. By our normalizations, $P_y(y) = f(y) -\frac{\eta}{2{p}} |y|^2=0$, and $\nabla P_y = \nabla \gamma_y - \frac{\eta}{{p}}y =0$, so $P_y = 0$. Thus, we want to show
\begin{align}
    &f(x) - \frac{\eta}{2{p}} |x|^2 \geq \frac{1}{2qM} |\nabla \gamma_x - \frac{\eta}{{p}} x|^2, \text{ and} \label{cw1} \\
    &-f(x) + \frac{\eta}{2{p}}|x|^2 - \langle \nabla \gamma_x - \frac{\eta}{{p}} x, -x\rangle \geq \frac{1}{2qM}|\nabla \gamma_x - \frac{\eta}{{p}} x|^2. \label{cw2}
\end{align}
From (\ref{v1}) and (\ref{v3}), we have
\begin{align}
     &f(x) - \frac{\eta}{2{p}}|x|^2 \geq \frac{1}{2qM}|\nabla \gamma_x|^2 + \bigg(1-\frac{1}{q}\bigg)\frac{\eta}{2}|x|^2 - \frac{\eta}{2{p}}|x|^2,  \label{v6}
\end{align}
and from (\ref{v2}) and (\ref{v4}),
\begin{align}
    -f(x) + \frac{\eta}{2{p}}|x|^2 - \langle \nabla \gamma_x - \frac{\eta}{{p}} x, -x\rangle &=-f(x) + \langle \nabla \gamma_x , x\rangle - \frac{\eta}{2{p}} |x|^2 \nonumber \\
    &\geq \frac{1}{2qM}|\nabla \gamma_x|^2 + \left( 1-\frac{1}{q}\right)\frac{\eta}{2}|x|^2- \frac{\eta}{2{p}} |x|^2 \label{v7}.
\end{align}
We next bound the terms on the right-hand side of (\ref{v6}) and (\ref{v7}) (which are the same). Because $\frac{1}{p}+\frac{1}{q}=1$, we have:
\begin{align*}
    \frac{1}{2qM}|\nabla \gamma_x|^2 + &\bigg(1-\frac{1}{q}\bigg)\frac{\eta}{2}|x|^2 - \frac{\eta}{2{p}}|x|^2 = \frac{1}{2qM}|\nabla \gamma_x|^2 \\
    &= \frac{1}{2qM}|\nabla \gamma_x-\frac{\eta}{{p}} x|^2 -\frac{\eta^2}{2q{p}^2M}|x|^2 + \frac{\eta}{q{p}M} \langle \nabla \gamma_x, x \rangle \\
    &\overset{(\ref{v5})}\geq \frac{1}{2qM}|\nabla \gamma_x-\frac{\eta}{{p}} x|^2 - \frac{\eta^2}{2q{p}^2M}|x|^2 + \frac{\eta^2}{q{p}M} |x|^2 \\
    &\geq \frac{1}{2qM}|\nabla \gamma_x-\frac{\eta}{{p}} x|^2,
\end{align*}
where the last inequality uses the fact that $2p \geq 1$. Thus, we have proven (\ref{cw1}) and (\ref{cw2}).

\end{proof}

\begin{proof}[Proof of Proposition \ref{scprop}]
We apply Lemma \ref{flexsc}: For $x \in E$, let $P_x \in \mathcal{R}_x$ satisfy $P_x(x) = f(x) - {\frac{\eta}{2p}}|x|^2$ and $\nabla P_x = \nabla \gamma_x - {\frac{\eta}{p}} x$. Then $P_x \sim_{qM} P_y$ for all $x,y \in E$. We apply Theorem \ref{azagra0} to the polynomials $(P_x)_{x \in E}$, to produce a convex function $G \in C^{1,1}_c(\R^n)$ satisfying $J_xG = P_x$ for all $x \in E$  and $\Lip(\nabla G) \leq qM$. We define $F \in C^{1,1}(\R^n)$ as $F(x) = G(x)+{\frac{\eta}{2p}}|x|^2$. Because $G$ is convex, $F$ is ${\frac{\eta}{p}}$-strongly convex; $\Lip(\nabla F) \leq qM+{\frac{\eta}{p}}\leq (q+1)M$, and $J_xF = \gamma_x$ for all $x \in E$. This completes the proof of the proposition.

\end{proof}

\subsection{Relevant Convex Subsets of $\mpp$} \label{sec:ksets}

In this section we introduce convex sets of jets that are relevant to the $C^{1,1}_c(\R^n)$ extension problem. We establish the basic properties of these sets in the two lemmas below. We invoke Helly's theorem to prove Lemma \ref{k1} which states that the convex sets are non-empty when $f : E \rightarrow \R $ satisfies a finiteness hypothesis, as in the assumptions of Theorem \ref{scthm}.

For the rest of this section, we fix a finite set $E \subset \R^n$ and a function $f:E \to \R$. For $M>\eta>0$, and $T \subset E$, we define the set $\widetilde{\Gamma}^1_\eta(T,M;f) \subset Wh(T)$ as
\begin{align*}
    &\widetilde{\Gamma}^1_\eta(T,M;f) := \bigcap_{ z \in E } \left\{ (P_x)_{x \in T} \in Wh(T): \begin{aligned}  &\exists \text{ an } \eta \text{-strongly convex function } F \in C^{1,1}_c(\R^n) \text{ s.t. } \\
    &\Lip(\nabla F) \leq M, \; J_xF = P_x \; \forall x \in T, \text{ and } F|_{\{z\} \cup T} = f|_{\{z\} \cup T}
    \end{aligned} \right\}.
\end{align*}

Again, where the function $f$ is evident, we will not write $f$; i.e., we write $\widetilde{\Gamma}^1_\eta(T,M)$ in place of $\widetilde{\Gamma}^1_\eta(T,M;f)$.

We establish the basic containments for these convex sets in the following. 

\begin{lem}
    Let $E \subset \R^n$ be finite and $f:E \to \R$. For all $T \subset E$, we have
    \begin{align}
    &\widetilde{\Gamma}^1_\eta(T,M_0) \subset \widetilde{\Gamma}^1_{\eta}(T,M)  &&(M>M_0>\eta> 0), \text{ and}\label{gc2.5}\\
    &\widetilde{\Gamma}^1_\eta(T,M) \subset \widetilde{\Gamma}^1_{\eta_0}(T,M) &&(M>\eta>\eta_0>0).  \label{gc3}
\end{align}
Let $(P_x)_{x \in T} \in \widetilde{\Gamma}^1_{\eta}(T,M)$; then
\begin{align}
    P_x \in \Gamma_\eta^E(x) \quad \quad  \text{for all } x \in T. \label{gc5}
\end{align}
\end{lem}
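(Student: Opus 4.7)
My plan is to derive all three containments directly from unpacking the definition of $\widetilde{\Gamma}^1_\eta(T,M;f)$, with only the last one requiring any nontrivial input (namely Lemma \ref{necsc}).

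For \eqref{gc2.5}, fix $(P_x)_{x \in T} \in \widetilde{\Gamma}^1_\eta(T, M_0)$. By definition, for every $z \in E$ there exists an $\eta$-strongly convex $F^z \in C^{1,1}_c(\R^n)$ with $\Lip(\nabla F^z) \leq M_0$, $J_x F^z = P_x$ for all $x \in T$, and $F^z|_{\{z\} \cup T} = f|_{\{z\} \cup T}$. Since $M_0 < M$, the bound $\Lip(\nabla F^z) \leq M_0 \leq M$ witnesses that the same $F^z$ qualifies $(P_x)_{x \in T}$ as an element of $\widetilde{\Gamma}^1_\eta(T, M)$.

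For \eqref{gc3}, the key observation is that $\eta$-strong convexity strengthens as $\eta$ increases: if $F - \tfrac{\eta}{2}|x|^2$ is convex and $\eta_0 < \eta$, then $F - \tfrac{\eta_0}{2}|x|^2 = (F - \tfrac{\eta}{2}|x|^2) + \tfrac{\eta - \eta_0}{2}|x|^2$ is the sum of a convex function and a convex function, hence convex. Thus any $F^z$ witnessing membership in $\widetilde{\Gamma}^1_\eta(T, M)$ is also $\eta_0$-strongly convex and witnesses membership in $\widetilde{\Gamma}^1_{\eta_0}(T, M)$.

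For \eqref{gc5}, fix $(P_x)_{x \in T} \in \widetilde{\Gamma}^1_\eta(T, M)$ and $x \in T$. I need to verify $P_x(x) = f(x)$ and $P_x(y) + \tfrac{\eta}{2}|y-x|^2 \leq f(y)$ for all $y \in E \setminus \{x\}$. For each $z \in E$, let $F^z$ be the $\eta$-strongly convex extension given by the definition, so $J_x F^z = P_x$, $\Lip(\nabla F^z) \leq M$, and $F^z|_{\{z\} \cup T} = f|_{\{z\} \cup T}$. Applying Lemma \ref{necsc} with $S = \{z\} \cup T$ yields $P_x = J_x F^z \in \Gamma^{\{z\} \cup T}_\eta(x; f)$; in particular $P_x(x) = f(x)$ (taking any $z$), and $P_x(z) + \tfrac{\eta}{2}|z-x|^2 \leq f(z)$. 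Letting $z$ range over $E \setminus \{x\}$ gives $P_x \in \Gamma^E_\eta(x;f)$.

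No step is really an obstacle here — the lemma is essentially a bookkeeping exercise: the first two containments exploit monotonicity in $M$ and $\eta$ built into the definition, and the third converts the defining witnesses $\{F^z\}_{z \in E}$ into the pointwise jet constraints via Lemma \ref{necsc}. The only mild subtlety is remembering that the family $(F^z)_{z \in E}$ in the definition is allowed to vary with $z$, so one must apply Lemma \ref{necsc} separately for each $z$ rather than hoping for a single global extension.
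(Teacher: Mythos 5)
Your proof is correct and follows essentially the same route as the paper's: the first two containments are read off directly from the definition (the paper doesn't even write them out, calling them immediate), and for the third you derive $P_x \in \Gamma^E_\eta(x)$ pointwise by applying Lemma \ref{necsc} (equivalently, the strong-convexity subgradient inequality) to each witness $F^z$ and its restriction to $\{x,z\}$. One small stylistic note: you invoke Lemma \ref{necsc} with $S = \{z\} \cup T$, while $S = \{x,z\}$ already suffices and is what the paper implicitly uses; either way the argument is sound.
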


\begin{proof}
    Properties (\ref{gc2.5}) and (\ref{gc3}) are immediate from the definition of the set $\widetilde{\Gamma}^1_{\eta}(T,M)$. Let $(P_x)_{x \in T} \in \widetilde{\Gamma}^1_\eta(T,M)$ and $x \in T$. For $y \in E \setminus \{x\}$, there exists an $\eta$-strongly convex function $F^y \in C^{1,1}_c(\R^n)$ such that $J_xF = P_x$ and $F^y|_{\{x,y\}}=f|_{\{x,y\}}$. Thus, $P_x(y) + \frac{\eta}{2} |y-x|^2 \leq f(y)$. Because this is true for all $y \in E \setminus \{x\}$, we deduce $P_x \in \Gamma^E_\eta(x)$.

\end{proof}

\begin{lem} \label{convk}
    Let $E \subset \R^n$ be finite, $f:E \to \R$, and $M> \eta>0$; for $T \subset E$, the set $\widetilde{\Gamma}^1_\eta(T,M) \subset Wh(T)$ is convex.
\end{lem}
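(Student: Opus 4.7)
The plan is to reduce convexity of $\widetilde{\Gamma}^1_\eta(T,M)$ to the convexity of each of the sets in the intersection defining it, and then check that convex combinations of extensions remain valid extensions. Since
\[
\widetilde{\Gamma}^1_\eta(T,M) \;=\; \bigcap_{z \in E} A_z,
\]
where
\[
A_z \;:=\; \left\{ (P_x)_{x \in T} \in Wh(T): \begin{aligned} &\exists\, \eta\text{-strongly convex } F \in C^{1,1}_c(\R^n) \text{ with }\\ & \Lip(\nabla F) \leq M,\; J_xF=P_x\;\forall x \in T,\; F|_{\{z\}\cup T}=f|_{\{z\}\cup T} \end{aligned} \right\},
\]
and since the intersection of convex sets is convex, it suffices to show each $A_z$ is convex.

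Fix $z \in E$ and take $(P_x)_{x \in T}, (Q_x)_{x \in T} \in A_z$ with witnessing extensions $F_P, F_Q \in C^{1,1}_c(\R^n)$ respectively. For $\lambda \in [0,1]$, I would set $F_\lambda := \lambda F_P + (1-\lambda) F_Q$ and verify that $F_\lambda$ is a valid witness for the convex combination $(\lambda P_x + (1-\lambda) Q_x)_{x \in T}$. The four conditions check out in a routine manner: (i) $F_\lambda - \frac{\eta}{2}|\cdot|^2$ is a convex combination of the convex functions $F_P - \frac{\eta}{2}|\cdot|^2$ and $F_Q - \frac{\eta}{2}|\cdot|^2$, hence convex, so $F_\lambda$ is $\eta$-strongly convex (and in particular in $C^{1,1}_c(\R^n)$); (ii) by the triangle inequality applied to $\nabla F_\lambda = \lambda \nabla F_P + (1-\lambda) \nabla F_Q$, we get $\Lip(\nabla F_\lambda) \leq \lambda M + (1-\lambda) M = M$; (iii) jets are linear in $F$, so $J_x F_\lambda = \lambda P_x + (1-\lambda) Q_x$ for $x \in T$; and (iv) on $\{z\} \cup T$, both $F_P$ and $F_Q$ agree with $f$, so the convex combination also agrees with $f$ there.

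This shows $(\lambda P_x + (1-\lambda) Q_x)_{x \in T} \in A_z$, establishing convexity of $A_z$, and hence of $\widetilde{\Gamma}^1_\eta(T,M)$. The argument is entirely elementary — there is no real obstacle here, since all three defining properties ($\eta$-strong convexity, Lipschitz bound on the gradient, prescribed values on a finite set) are preserved under convex combinations of functions in $C^{1,1}_c(\R^n)$, and the jet map is linear.
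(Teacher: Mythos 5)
Your proof is correct and follows essentially the same route as the paper: decompose $\widetilde{\Gamma}^1_\eta(T,M)$ as $\bigcap_{z\in E} A_z$ (the paper calls these sets $K(T,z)$), show each is convex by taking a convex combination of witnessing extensions and noting that $\eta$-strong convexity, the gradient Lipschitz bound, and prescribed values are all preserved, and then use linearity of the jet map. The only cosmetic difference is that you spell out the four verification steps more explicitly than the paper does.
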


\begin{proof}
For $T\subset E$, $z \in E$, define $K(T,z) \subset Wh(T)$ as
\begin{align*}
    K(T,z):=\left\{ (P_x)_{x \in T} \in Wh(T): \begin{aligned}  &\exists \text{ an } \eta \text{-strongly convex function } F \in C^{1,1}_c(\R^n) \text{ s.t. } \\
    &\Lip(\nabla F) \leq M, \; J_xF = P_x \; \forall x \in T, \text{ and } F|_{\{z\} \cup T} = f|_{\{z\} \cup T}
    \end{aligned} \right\}.
\end{align*}
Let $F_1, F_2 \in C^{1,1}_c(\R^n)$ be $\eta$-strongly convex functions satisfying $\Lip(\nabla F_i) \leq M$ for $i=1,2$. Then $F_t: = tF_1 + (1-t)F_2$ is $\eta$-strongly convex and satisfies $\Lip(\nabla F_t) \leq M$. Because $J_xF^t = tJ_xF_1 + (1-t)J_xF_2$, we see $(J_xF_1)_{x \in T}, (J_xF_2)_{x \in T} \in K(T,z)$ implies $(J_xF_t)_{x \in T} \in K(T,z)$. Thus, $K(T,z)$ is convex, and $\widetilde{\Gamma}^1_\eta(T,M) = \bigcap_{z \in E} K(T,z)$ is convex.

\end{proof}

\begin{lem} \label{k1}
    Let $E \subset \R^n$ be finite and $f:E \to \R$. Suppose $j,k \in \N$ satisfy $k \geq (n+2)j+1$, and for every $S \subset E$ satisfying $\#S \leq k$, there exists an $\eta$-strongly convex function $F^S \in C^{1,1}_c(\R^n)$ satisfying $F^S|_S=f|_S$ and $\Lip(\nabla F^S) \leq M$. Then for every $T \subset E$ satisfying $\#T \leq j$, $\widetilde{\Gamma}^1_\eta(T,M) \neq \emptyset$.
\end{lem}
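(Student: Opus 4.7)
The plan is to apply Helly's theorem (Theorem \ref{helly}) to the finite family $\mathcal{J} := \{K(T,z)\}_{z \in E}$ of convex subsets of the finite-dimensional vector space $Wh(T) \cong \R^{(n+1)\#T}$, where $K(T,z)$ is the auxiliary set introduced in the proof of Lemma \ref{convk}, so that $\widetilde{\Gamma}^1_\eta(T,M) = \bigcap_{z \in E} K(T,z)$. Fix $T \subset E$ with $\#T \leq j$ and set $d := (n+1)\#T$. The convexity of each $K(T,z)$ has already been established in Lemma \ref{convk}, and $E$ finite ensures $\mathcal{J}$ is finite so no compactness is needed in Helly's theorem. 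Thus it suffices to verify that any subfamily of $d+1$ of the sets $K(T,z)$ has non-empty intersection.

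To check the Helly hypothesis, let $z_1, \ldots, z_m \in E$ with $m \leq d+1 \leq (n+1)j+1$, and set $S := T \cup \{z_1, \ldots, z_m\}$. Then
\[
\#S \leq \#T + m \leq j + (n+1)j + 1 = (n+2)j+1 \leq k,
\]
so the finiteness hypothesis of the lemma produces an $\eta$-strongly convex $F^S \in C^{1,1}_c(\R^n)$ satisfying $F^S|_S = f|_S$ and $\Lip(\nabla F^S) \leq M$. Defining $P_x := J_x F^S \in \mathcal{R}_x$ for each $x \in T$, the function $F := F^S$ itself witnesses membership $(P_x)_{x \in T} \in K(T, z_i)$ for every $i \in \{1, \ldots, m\}$, because $\{z_i\} \cup T \subset S$ forces $F^S|_{\{z_i\} \cup T} = f|_{\{z_i\} \cup T}$. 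Hence $(P_x)_{x \in T} \in \bigcap_{i=1}^m K(T, z_i)$, verifying the Helly condition.

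Applying Helly's theorem to $\mathcal{J}$ then yields $\widetilde{\Gamma}^1_\eta(T,M) = \bigcap_{z \in E} K(T,z) \neq \emptyset$ (with the degenerate case $\#E \leq d+1$ handled by applying the construction above directly to all of $E$ in place of $\{z_1, \ldots, z_m\}$, bypassing Helly). The only substantive content is the dimension bookkeeping: the bound $k \geq (n+2)j+1$ is calibrated precisely so that the $\#T \leq j$ base points together with the at most $(n+1)j+1$ additional points needed to invoke Helly's theorem in the $(n+1)\#T$-dimensional jet space $Wh(T)$ still fit inside a subset $S \subset E$ of size $\leq k$ on which the finiteness hypothesis can be applied. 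I expect no serious obstacle beyond arranging this counting correctly.
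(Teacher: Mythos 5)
Your argument is correct and is essentially identical to the paper's proof: both write $\widetilde{\Gamma}^1_\eta(T,M) = \bigcap_{z\in E} K(T,z)$ as an intersection of convex sets in the $(n+1)\#T$-dimensional space $Wh(T)$, verify the Helly hypothesis by applying the finiteness hypothesis to the set $S\cup T$ (using the bound $\#(S\cup T)\le (n+2)j+1\le k$ and the jets $(J_xF^{S\cup T})_{x\in T}$ as a witness), and then invoke Helly's theorem. Your handling of the degenerate case $\#E \le d+1$ is a harmless extra detail that the paper absorbs implicitly by allowing the $z_i$ to repeat.
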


\begin{proof}
In the previous lemma, we proved that $\widetilde{\Gamma}^1_\eta(T,M) = \bigcap_{z \in E  } K(T,z)$ is the intersection of a family of $\big((n+1)|T|\big)$-dimensional convex sets. Here we show that given $T \subset E$, every subfamily of $\big((n+1)|T|+1\big)$ of the convex sets $K(T,z)$ has a non-empty intersection. Then we can apply Helly's Theorem (Theorem \ref{helly}) to prove the intersection of the entire family is non-empty.

Let $T \subset E$ satisfy $\#T \leq j$. Let $z_1, \dots, z_{(n+1)j+1}$ be points of $E  $; then  $S := \{z_1,\cdots,z_{(n+1)j+1}\}$ is contained in $E$ and $\#(S \cup T) \leq (n+2)j+1 \leq k$. By applying the hypothesis of the lemma to the set $S \cup T$, we deduce there exists an $\eta$-strongly convex function $F^{S \cup T} \in C^{1,1}_c(\R^n)$ satisfying $F^{S \cup T}|_{S \cup T} = f|_{S \cup T}$ and $\Lip( \nabla F^{S \cup T}) \leq M$. Thus,
\begin{align*}
    (J_xF^{S \cup T})_{x \in T} &\in \bigcap_{i = 1}^{(n+1)j+1} \left\{ (P_x)_{x \in T} \in Wh(T): \begin{aligned}  &\exists \text{ an } \eta \text{-strongly convex function } F \in C^{1,1}_c(\R^n) \text{ s.t. } \\
    &\Lip(\nabla F) \leq M, \; J_xF = P_x \; \forall x \in T, \text{ and } F|_{\{z_i\} \cup T} = f|_{\{z_i\} \cup T}
    \end{aligned} \right\} \\
    &\quad \quad = \bigcap_{i=1}^{(n+1)j+1} K(T,z_i).
\end{align*}
Thus, the intersection of any $\big((n+1)j+1\big)$-element subfamily of $\{K(T,z) : z \in E  \}$ is non-empty. We can then apply Helly's Theorem to conclude $\widetilde{\Gamma}^1_\eta(T,M)  = \bigcap_{z \in E  } K(T,z)$ is non-empty.
\end{proof}

\subsection{Lipschitz Selection of Gradient Vectors}

We want to make a Lipschitz selection of gradient vectors from the sets $\left( \{\nabla \gamma: \gamma \in {\Gamma}^E_{{\eta}}(x;f) \}\right)_{x \in E}$. To do so, we adapt Theorem \ref{selthm0} to our setting:

\begin{prop}\label{selthm}
    There exist $k^\#_s\in \N$ and $C^\#>0$ such that the following holds: Let $E \subset \R^n$ be finite. For each $x \in E$, let $K(x) \subset \R^n$ be a closed convex set. Suppose that for each $S \subset E$ with $\#S \leq k^\#_s$, there exists a Lipschitz function $F^S : \R^n \rightarrow \R^n$ satisfying $\Lip (F^S) \leq 1$ and $F^S(x) \in K(x)$ for all $x \in S$. Then there exists a $C^{0,1}$ (bounded and Lipschitz) function $F : \R^n \rightarrow \R^n$ with $\Lip ( F) \leq C^\#$, such that $F(x) \in K(x)$ for all $x \in E$.
\end{prop}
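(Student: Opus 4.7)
The plan is to deduce Proposition \ref{selthm} from Theorem \ref{selthm0} using Helly's theorem (Theorem \ref{helly}) together with a truncation of the target sets. The essential difference between the two statements is that our hypothesis controls only $\Lip(F^S)$, whereas Theorem \ref{selthm0} demands a bound on the full $C^{0,1}$-norm. Bridging this gap requires first selecting a ``centered'' base value $v_0 \in K(x_0)$ via Helly's theorem, then truncating each $K(x)$ to a bounded convex subset $\widetilde{K}(x)$, before invoking Theorem \ref{selthm0} in its stated form.

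Take $k^\#_s := \max\{n + 2,\, k^\#_{s,\mathrm{FIL}} + 1\}$, where $k^\#_{s,\mathrm{FIL}}$ is the finiteness constant from Theorem \ref{selthm0} with $D = n$. The case $\#E \leq 1$ is trivial; otherwise fix $x_0 \in E$ and set $R := \diam(E) > 0$. For each $y \in E$ introduce the closed convex set $C_y := K(y) + \overline{B}(0, |y - x_0|)$. For any $Y \subset E$ of size at most $n + 1$, apply the hypothesis to $Y \cup \{x_0\}$ (of size at most $n + 2$) to obtain a $1$-Lipschitz $F^{Y \cup \{x_0\}}$ whose value at $x_0$ lies in $K(x_0)$ and in each $C_y$, $y \in Y$ (using $|F^{Y \cup \{x_0\}}(x_0) - F^{Y \cup \{x_0\}}(y)| \leq |x_0 - y|$ together with $F^{Y \cup \{x_0\}}(y) \in K(y)$). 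Hence every $(n+1)$-element subfamily of $\{C_y : y \in E\}$ has nonempty common intersection, and Helly's theorem yields $v_0 \in K(x_0) \cap \bigcap_{y \in E} C_y$; equivalently, $K(y) \cap \overline{B}(v_0, |y - x_0|) \neq \emptyset$ for every $y \in E$.

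Define $\widetilde{K}(x) := K(x) \cap \overline{B}(v_0, 2R)$; by the Helly compatibility of $v_0$, each $\widetilde{K}(x)$ is nonempty, closed, convex, and uniformly bounded. For each $S \subset E$ with $\#S \leq k^\#_{s,\mathrm{FIL}}$, apply the hypothesis to $S \cup \{x_0\}$ and, via an auxiliary Helly argument on the family of admissible $1$-Lipschitz local selections (tracking the possible values at $x_0$), select one whose value at $x_0$ lies in $K(x_0) \cap \overline{B}(v_0, R)$. Composing this local selection with the $1$-Lipschitz projection onto $\overline{B}(v_0, 2R)$ then produces a bounded Lipschitz local selection $\widetilde{F}^S$ for $\widetilde{K}$ on $S$. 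Applying Theorem \ref{selthm0} with $D = n$ to the truncated problem delivers $F \in C^{0,1}(\R^n, \R^n)$ with $F(x) \in \widetilde{K}(x) \subset K(x)$ for every $x \in E$, as required.

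The main obstacle is obtaining a Lipschitz bound $\Lip(F) \leq C^\#$ independent of $R$. A naive rescaling that normalizes the $C^{0,1}$-norms of the truncated local selections before invoking Theorem \ref{selthm0} would introduce an unwanted factor of $R$ into the output's Lipschitz constant. Removing this $R$-dependence requires separating the Lipschitz and sup-norm parts of the $C^{0,1}$-norm through the construction underlying Theorem \ref{selthm0}, reflecting the fact that its conclusion's Lipschitz part is governed purely by the Lipschitz part of the hypothesis, so that the sup-norm inflation caused by truncation affects only the boundedness of $F$ and not its Lipschitz constant.
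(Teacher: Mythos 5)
Your proposal correctly identifies the core difficulty — Theorem~\ref{selthm0} needs control of the full $C^{0,1}$-norm while the hypothesis of Proposition~\ref{selthm} controls only the Lipschitz seminorm — but it does not actually overcome it, and the way you characterize the obstruction is mistaken.

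Your truncation strategy (locate a centre $v_0$ via Helly, intersect each $K(x)$ with $\overline{B}(v_0,2R)$, project local selections onto that ball) is a reasonable way to get bounded local selections; the paper takes a shortcut here, observing that since $E$ is finite there are only finitely many admissible $S$, so the given $F^S$ have a \emph{uniform} sup bound $A_0$ on a ball containing $E$ — no Helly needed for this step. Either way, one ends up with local selections whose Lipschitz constant is $\leq C$ but whose sup norm can be as large as $O(R)$, and one must rescale before feeding them to Theorem~\ref{selthm0}. Your final paragraph claims that any rescaling ``would introduce an unwanted factor of $R$'' and that the cure is to ``separate the Lipschitz and sup-norm parts of the $C^{0,1}$-norm through the construction underlying Theorem~\ref{selthm0}.'' Both assertions are wrong. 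The conclusion of Theorem~\ref{selthm0} is a bound on $\|F\|_{C^{0,1}}$ as stated; nothing in the statement lets you decouple the two pieces without reopening its proof, so your proposal simply does not finish.

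The actual fix is elementary and does not touch the interior of Theorem~\ref{selthm0}. Rescale \emph{both} space and values by the \emph{same} factor $N$: set $E' = E/N$ and, for $S' \subset E'$, define $G^{S'}(x) := \tfrac{1}{N(2+4C_0)}\,\bar F^{NS'}(Nx)$ (this is exactly what the paper does, with $N$ on the order of $A_0 + R_0$ and $C_0$ controlling the bump-function cutoff). Because the spatial factor $N$ cancels the value factor $1/N$ in the Lipschitz quotient, $\Lip(G^{S'})$ carries no factor of $N$ at all, while $\|G^{S'}\|_{C^0}$ is divided by $N$; choosing $N$ proportional to the a priori sup bound makes $\|G^{S'}\|_{C^{0,1}} \leq 1$. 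After Theorem~\ref{selthm0} produces $G$ with $\|G\|_{C^{0,1}} \leq C_0^\#$, undoing the scaling $F(x) := N(2+4C_0)\,G(x/N)$ again cancels the $N$'s in the Lipschitz constant, giving $\Lip(F) \leq (2+4C_0)C_0^\#$, independent of $R$, $A_0$, and $N$. So the ``naive'' rescaling, done symmetrically in space and value, works; the $R$-dependence you feared does not materialize, and the gap in your proof is that you did not carry out this step.
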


\begin{proof}
    Let $k^\#_s:= k^\#_s(n,D)=k^\#_s(n,n)$ from Theorem \ref{selthm0}. Because $E \subset \R^n$ is finite, there exists $R_0>0$ such that $E \subset B(0,R_0)$. For each $S \subset E$ with $\#S \leq k^\#_s$, let $F^S : \R^n \rightarrow \R^n$ be a Lipschitz function satisfying $\Lip (F^S) \leq 1$ and $F^S(x) \in K(x)$ for all $x \in S$. Each $F^S$ is locally bounded, and there are finitely many $S$, so we can let $A_0 \in (0,\infty)$ be a uniform upper bound on $\sup_{x \in B(0,R_0)}|F^S(x)|$ for every $S \subset E$ with $\#(S) \leq k^\#_s$. Thus,
    
    $$\bigcup_{\substack{ S \subset E,\\ \#S \leq k^\#_s}} \left\{F^S(x):x \in B(0,R_0)\right\} \subset B(0,A_0),$$ 
    and because $\Lip(F^S) \leq 1$, $\|F^S\|_{C^0(B(A_0 + 2 R_0))} \leq 2A_0 +R_0$ for any $S \subset E$ satisfying $\#S \leq k^\#_s$.
    
    Let the function $\theta \in C^{\infty}(\R^n)$ be a smooth bump function satisfying $0 \leq \theta\leq 1$, $\theta|_{B(0,R_0)} = 1|_{B(0,R_0)}$, $\supp(\theta) \subset B(0, A_0 + 2R_0)$, and $|\nabla \theta (x)| \leq \frac{C_0}{(A_0 + R_0)}$ for $x \in \R^n$. For $S \subset E$, $\#S \leq k^\#_s$, let the function $\bar{F}^S: \R^n \to \R^n$ be $\bar{F}^S(x): = F^S(x) \theta(x)$. Then $\bar{F}^S \in C^{0,1}(\R^n;\R^n)$ satisfies
    \begin{align*}
        &\|\bar{F}^S\|_{C^0(\R^n)} \leq \|F^S\|_{C^0(B(A_0 + 2 R_0))}  \leq 2A_0 + R_0, \\
        & \bar{F}^S(x) \in K(x) \text{ for all }x \in S, \text{ and}\\
        &\Lip(\bar{F}^S) \leq \Lip(F^S) +\|F^S\|_{C^0(B(A_0 + 2 R_0))}\cdot \Lip(\theta) \leq 1 + \frac{(2A_0 + R_0)C_0}{A_0 + R_0} \leq 1+2C_0.
    \end{align*}
    Fix $N: = 2(A_0+R_0)$. Let $E' \subset \R^n$ and $K'(x) \subset \R^n$ for $x\in E$ be 
    \begin{align*}
        &E' =\frac{1}{N}E := \{y/N: y\in E \}, \text{ and} \\
        &K'(x):= \frac{1}{N(2+4C_0)} K(x):= \left\{\frac{y}{N(2+4C_0)}: y\in K(x) \right\}.
    \end{align*}
    For $S' \subset E'$ satisfying $\#S' \leq k^\#_s$, the set $NS'$ is contained in $E$. 
    Let the function $G^{S'}: \R^n \to \R^n$ be
    \begin{align*}
        G^{S'}(x):=
            \frac{1}{N(2+4C_0)}\bar{F}^{NS'}(Nx).
    \end{align*}
    Then $|G^{S'}(x)| \leq 1/2$ for all $x \in \R^n$, and $\Lip(G^{S'}) \leq \frac{1}{2+4C_0} \Lip(\bar{F}^{NS'}) \leq 1/2$, implying the function $G^{S'}$ satisfies:
    \begin{align*}
        & \|G^{S'}\|_{C^{0,1}(\R^n, \R^n)} \leq 1 \text{ and }\\
        &G^{S'}(x) \in K'(Nx) \text{ for }x \in S'.
    \end{align*}
    This result holds for all $S' \subset E'$ satisfying $\#S \leq k^\#_s$. Thus, the convex sets $(K'(Nx))_{x \in E'}$ satisfy the hypotheses of Theorem \ref{selthm0} with $(n,D)=(n,n)$; applying Theorem \ref{selthm0}, we deduce there exists $G \in C^{0,1}(\R^n,\R^n)$ satisfying $G(x) \in K'(Nx)$ for all $x \in E'$ and $\|G\|_{C^{0,1}(\R^n, \R^n)} \leq C_0^\#$, where $C_0^\#:=C^\#(n,n)$ from Theorem \ref{selthm0}. Let the function $F: \R^n \to \R^n$ be $F(x):=N(2+4C_0) G(\frac{x}{N})$; $F$ satisfies $F(x) \in K(x)$ for all $x \in E$ and $\Lip(F) \leq C_0^\#(2+4C_0)=:C^\#$. This completes the proof of Proposition \ref{selthm}.
\end{proof}

\section{Main Results in Dimension $n \geq 1$  (Theorem \ref{scthm})} \label{sec:scthm}

We begin by proving a version of Theorem \ref{scthm} for finite $E$. Using a compactness argument, we will then extend the result to arbitrary $E$.

\begin{THM} \label{finthm}
    Let $E \subset \R^n$ be finite, $M>\eta>0$, $f: E \to \R$, and ${k^\#} = k^\#_s(n+2) +1$, where $k^\#_s$ is the constant from Proposition \ref{selthm}. Suppose for all $S \subset E$ satisfying $\#S \leq {k^\#}$, there exists an $\eta$-strongly convex function $F^S \in C^{1,1}_c(\R^n)$ satisfying $F^S|_S=f|_S$ and $\Lip(\nabla F^S) \leq M$. Then for all $p,q \in (1, \infty)$ satisfying $\frac{1}{p}+\frac{1}{q}=1$, there exists an ${\frac{\eta}{p}}$-strongly convex function $F \in C^{1,1}_c(\R^n)$ satisfying $F|_E = f|_E$, and $\Lip(\nabla F) \leq C_3M^2/\eta$, where $C_3 = (C^\# )^2(q+1) $, and $C^\#$ is the constant in Proposition \ref{selthm}.   
\end{THM}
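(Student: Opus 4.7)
The plan is to produce a Whitney field $(P_x)_{x\in E}\subset \mpp$ with $P_x\in \Gamma^E_\eta(x;f)$ for every $x\in E$ and a uniform Lipschitz estimate $|\nabla P_x-\nabla P_y|\le C^\# M|x-y|$, and then feed this into Proposition~\ref{scprop}. Indeed, for such a field, whenever $x,y\in E$,
\[
P_y(y)-P_x(y)\;=\;f(y)-P_x(y)\;\ge\;\tfrac{\eta}{2}|y-x|^2\;\ge\;\frac{1}{2\widetilde M}|\nabla P_x-\nabla P_y|^2
\]
with $\widetilde M:=(C^\#)^2M^2/\eta$, and the symmetric inequality is identical; hence $P_x\sim_{\widetilde M}P_y$, and Proposition~\ref{scprop} yields an $\eta/p$-strongly convex $F\in C^{1,1}_c(\R^n)$ with $J_xF=P_x$ for every $x\in E$ (so $F|_E=f|_E$) and $\Lip(\nabla F)\le(q+1)\widetilde M=C_3M^2/\eta$.

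The construction of $(P_x)_{x\in E}$ reduces to a Lipschitz selection of gradients from the convex sets $K(x):=\{\nabla P:P\in \Gamma^E_\eta(x;f)\}\subset \R^n$. I will verify the $k^\#_s$-point hypothesis of Proposition~\ref{selthm} for $(K(x))_{x\in E}$ with Lipschitz constant $M$. Fix $T\subset E$ with $\#T\le k^\#_s$. Since $k^\#=(n+2)k^\#_s+1$, Lemma~\ref{k1} provides a field $(Q_x)_{x\in T}\in \widetilde\Gamma^1_\eta(T,M)$. The containment~\eqref{gc5} gives $Q_x\in \Gamma^E_\eta(x;f)$, so $\nabla Q_x\in K(x)$. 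For any $x,y\in T$, the definition of $\widetilde\Gamma^1_\eta(T,M)$ (applied with $z=x$, say) produces an $\eta$-strongly convex $F\in C^{1,1}_c(\R^n)$ with $\Lip(\nabla F)\le M$ whose jets at $x$ and $y$ are $Q_x$ and $Q_y$; Corollary~\ref{neccor} then gives $Q_x\sim_M Q_y$, and adding~\eqref{a} to~\eqref{b} and invoking Cauchy--Schwarz (as in the discussion preceding~\eqref{wh1}) yields $|\nabla Q_x-\nabla Q_y|\le M|x-y|$. The map $x\mapsto \nabla Q_x$ from $T$ to $\R^n$ is therefore $M$-Lipschitz, and Kirszbraun's theorem extends it to $F^T:\R^n\to \R^n$ with $\Lip(F^T)\le M$ and $F^T(x)\in K(x)$ for $x\in T$. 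After dividing through by $M$, Proposition~\ref{selthm} applies and returns $G:\R^n\to \R^n$ with $\Lip(G)\le C^\# M$ and $G(x)\in K(x)$ for all $x\in E$.

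To close the loop, for each $x\in E$ define $P_x\in \mpp$ by $P_x(x)=f(x)$ and $\nabla P_x=G(x)$. Since $G(x)\in K(x)$ and membership in $\Gamma^E_\eta(x;f)$ pins the value at $x$ to $f(x)$, automatically $P_x\in \Gamma^E_\eta(x;f)$; and $|\nabla P_x-\nabla P_y|=|G(x)-G(y)|\le C^\# M|x-y|$ is precisely the input required in paragraph one. The main obstacle is the finiteness-to-selection bridge in paragraph two: one must unpack $\widetilde\Gamma^1_\eta(T,M)$ to extract simultaneously the gradient membership (via~\eqref{gc5}) and the pairwise jet estimate $Q_x\sim_M Q_y$ (via Corollary~\ref{neccor}), and then lift the pointwise $M$-Lipschitz gradient selection on $T$ to a global $M$-Lipschitz map on $\R^n$, uniformly in $T$, so that Proposition~\ref{selthm} applies. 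Everything outside this step—the reduction to a gradient selection, the verification of~\eqref{cw11}, and the final application of Proposition~\ref{scprop}—is algebraic bookkeeping.
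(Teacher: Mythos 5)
Your proof is correct and follows essentially the same strategy as the paper: invoke Lemma~\ref{k1} and~\eqref{gc5} to verify the hypotheses of Proposition~\ref{selthm} for the convex sets $K(x)=\{\nabla P:P\in\Gamma^E_\eta(x;f)\}$, extract a $C^\#M$-Lipschitz gradient selection $G$, build the Whitney field $(P_x)_{x\in E}$ from $G$ and the condition $P_x(x)=f(x)$, and feed it into Proposition~\ref{scprop}. The one place you diverge is in producing the $M$-Lipschitz competitor $F^T$ for Proposition~\ref{selthm}: you bound $|\nabla Q_x-\nabla Q_y|\le M|x-y|$ pairwise on $T$ via Corollary~\ref{neccor} and then invoke Kirszbraun's theorem to extend off the finite set. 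The paper avoids this step entirely by noting that membership of $(Q_x)_{x\in T}$ in $\widetilde\Gamma^1_\eta(T,M)$ already furnishes a globally defined $\eta$-strongly convex $F^T\in C^{1,1}_c(\R^n)$ with $\Lip(\nabla F^T)\le M$ and $J_xF^T=Q_x$ for $x\in T$; the map $\nabla F^T$ is then directly the required $M$-Lipschitz selection on $\R^n$, no extension theorem needed. Your detour is sound but adds an unnecessary appeal to Kirszbraun; everything else, including the scaling by $M$ before using Proposition~\ref{selthm} and the final arithmetic giving $\Lip(\nabla F)\le(q+1)(C^\#)^2M^2/\eta$, matches the paper.
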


\begin{proof}

By applying Lemma \ref{k1} with $j:= k^\#_s$ and $k := k^\#_s(n+2) +1$, we see for every $T \subset E$ satisfying $\#T \leq k^\#_s$, $\widetilde{\Gamma}^1_\eta(T,M) \neq \emptyset$. From the definition of the set $\widetilde{\Gamma}^1_\eta(T,M)$, we deduce for $T \subset E$ satisfying $\#T \leq k^\#_s$, there exists an $\eta$-strongly convex function $F^T \in C^{1,1}_c(\R^n)$ satisfying $\Lip(\nabla F^T) \leq M$, $F|_T=f|_T$, and $(J_xF^T)_{x \in T} \in \widetilde{\Gamma}^1_\eta(T,M)$. In light of \eqref{gc5}, $J_xF^T \in \Gamma_\eta^E(x)$ for all $x \in T$. We summarize this result:
\begin{align}
    &\text{For all } T \subset E \text{ satisfying } \#T \leq k^\#_s \text{ there exists an }\eta \text{-strongly convex function }F^T \in C^{1,1}_c(\R^n)\nonumber \\
    &\text{satisfying } \Lip(\nabla F^{T}) \leq M \text{ and } J_xF^{T} \in \Gamma^E_{\eta}(x) \; \forall x \in T.  \label{FH2}
\end{align} 

For $x \in E$, $\eta>0$, let $\bar{\Gamma}^E_{\eta}(x) \subset \R^n$ be
\[
\bar{\Gamma}^E_{\eta}(x) : = \{ \nabla P: P \in \Gamma^E_{\eta}(x) \}.
\]
(See \eqref{Gamma_summary} for the definition of $\Gamma^E_\eta(x)$.) For $x \in E$, the set $\Gamma^E_\eta(x)$ is a convex subset of $\mathcal{R}_x$, so the set $\bar{\Gamma}^E_{\eta}(x)$ is a convex subset of $\R^n$. 

Let $S \subset E$ satisfy $\#S \leq k^\#_s$. Define $G^S \in C^{0,1}(\R^n, \R^n)$ as $G^S(x):= \nabla F^S (x)$, where $F^S \in C^{1,1}_c(\R^n)$ is the function produced by applying (\ref{FH2}) to the set $S$. Then $G^S(x) \in \bar{\Gamma}_{\eta}^E(x)$ for all $x \in S$ and $\Lip(G^S) \leq M$. We apply Proposition \ref{selthm} to produce $G \in C^{0,1}(\R^n,\R^n)$ satisfying $G(x) \in \bar{\Gamma}_{\eta}^E(x)$ for all $x \in E$ and 
\begin{align}
    \Lip(G) \leq C^\# M. \label{lipc}
\end{align}

For $x \in E$, define $\gamma_x \in \mathcal{R}_x$ as the polynomial satisfying $\gamma_x(x)=f(x)$ and $\nabla \gamma_x = G(x)$. Immediately, $\gamma_x \in \Gamma^E_{\eta}(x)$, so for all $x,y \in E$ 
\begin{align}
    &\gamma_x(x) - \gamma_y(x) \geq \frac{\eta}{2} |x - y|^2, \text{ and} \label{sca}\\
    &\gamma_y(y) - \gamma_x(y) \geq \frac{\eta}{2} |x - y|^2. \label{scb}
\end{align}
In light of (\ref{lipc}), $|\nabla \gamma_x - \nabla \gamma_y| \leq C^\#  M |x-y|$. In combination with (\ref{sca}) and (\ref{scb}), we deduce for all $x,y \in E$,
\begin{align*}
    &\gamma_x(x) - \gamma_y(x) \geq \frac{\eta}{2(C^\# M)^2} |\nabla \gamma_x - \nabla \gamma_y|^2, \text{ and} \\
    &\gamma_y(y) - \gamma_x(y) \geq \frac{\eta}{2(C^\# M)^2} |\nabla \gamma_x - \nabla \gamma_y|^2,
\end{align*}
implying $\gamma_x \sim_{\frac{M^2}{\eta}(C^\#)^2} \gamma_y$ for all $x,y \in E$. Thus, we can apply Proposition \ref{scprop} to produce an ${\frac{\eta}{p}}$-strongly convex function $F \in C^{1,1}_c(\R^n)$ satisfying $F|_E=f|_E$ and $\Lip (\nabla F) \leq (C^\#)^2(q+1) M^2 /\eta$, where $C^\#$ is the constant from Proposition \ref{selthm}. This completes the proof of Theorem \ref{finthm}.

\end{proof}

\begin{proof}[Proof of Theorem \ref{scthm}]

Fix $p,q \in (1, \infty)$ satisfying $\frac{1}{p}+\frac{1}{q}=1$. Let $E \subset \R^n$ be compact. There exists $R>1$ such that $E \subset B(0,R)$. For $A>0$, let $\mathcal{B}(A) \subset C^1(B(0,2R))$ be
\begin{align*}
    \mathcal{B}(A) = \{ F \in C^{1,1}_c(B(0,2R)): F \text{ is }\frac{\eta}{p} \text{-strongly convex, and } \|F\|_{C^{1,1}(B(0,2R))} \leq A \}.
\end{align*}
The set $\mathcal{B}(A)$ is closed in the $C^1(B(0,2R))$-topology. For any $A>0$, $\mathcal{B}(A)$ is also bounded and equicontinuous in the $C^1(B(0,2R))$-topology,  implying by the Arzel\`a-Ascoli Theorem that $\mathcal{B}(A)$ is compact. 

Let $E'$ be a countable dense subset of $E$, and let $(E_i)_{i \in \N}$ be an increasing sequence of sets satisfying for $i \in \N$, $E_i \subset E'$,  $\# E_i < \infty$, and $\bigcup_{i \in \N} E_i = E'$. By assumption, for all $S \subset E_i \subset E$ satisfying $\#S \leq {k^\#}$, there exists an $\eta$-strongly convex function $F^S \in C^{1,1}_c(\R^n)$ satisfying $F^S|_S=f|_S$ and $\Lip(\nabla F^S) \leq M$. 

Therefore, for each $i \in \N$, we can apply Theorem \ref{finthm} to produce $F_i \in C^{1,1}_c(\R^n)$, satisfying
\begin{align*}
    &F_i \text{ is an } {\frac{\eta}{p}} \text{-strongly convex function,}\\
    &F_i|_{E_i} = f|_{E_i}, \text{ and} \\
    & \Lip\big(\nabla F_i\big) \leq  C_3\frac{M^2}{\eta},
\end{align*}
where $C_3 =(C^\#)^2(q+1)$. Restricting the domain of $F_i$ to $B(0,2R)$, we see for $A$ large enough, $(F_i)_{i \in \N} \subset \mathcal{B}(A)$.

By the compactness of $\mathcal{B}(A)$, there exists a convergent subsequence $(F_{i_j})_{j \in \N} \to \bar{F} \in \mathcal{B}(A)$ in the $C^1$ topology. The function $\bar{F}$ satisfies $\bar{F} \in C^{1,1}_c(B(0,2R))$,
\begin{align*}
    &\bar{F} \text{ is an } {\frac{\eta}{p}} \text{-strongly convex function,}\\
    &\bar{F}|_{E'} = f|_{E'}, \text{ and} \\
    & \Lip\big(\nabla \bar{F}; B(0,2R)\big) \leq C_3\frac{M^2}{\eta},
\end{align*}
where the last inequality follows because $\Lip\big(\nabla F_{i_j}\big) \leq C_3\frac{M^2}{\eta}$ for all $j \in \N$. Further, because this convergence is uniform, $\bar{F}|_E=f|_E$.

We plan to apply Proposition \ref{scprop} to $(J_x\bar{F})_{x \in B(0,R)}$, so we verify the hypotheses of the proposition with $E^*:=\overline{B(0,R)}$ and $f^*:=\bar{F}$: By Lemma \ref{neclem}, $J_x\bar{F} \sim_{\Lip(\nabla \bar{F};B(0,2R))} J_y\bar{F}$ for all $x,y \in \overline{B(0,R)}$. Recall that in \eqref{Gamma_summary} we defined $\Gamma^{\overline{B(0,R)}}_\eta(x;\bar{F}) \subset Wh(\overline{B(0,R)})$ for $x \in \overline{B(0,R)}$ as
\[
\Gamma^{\overline{B(0,R)}}_\eta(x;\bar{F}):=\bigcap_{y \in \overline{B(0,R)} \setminus \{x\}} \{ P \in \mathcal{R}_x: P(x) = \bar{F}(x) \text{ and } P(y) + \frac{\eta}{2}|y-x|^2 \leq \bar{F}(y) \}.
\]
Because $\bar{F}$ is ${\frac{\eta}{p}}$-strongly convex on $B(0,2R)$, $J_x\bar{F} \in \Gamma_{{\frac{\eta}{p}}}^{\overline{B(0,R)}}(x;\bar{F})$ for all $x \in \overline{B(0,R)}$. 

We apply Proposition \ref{scprop} to $(J_x \bar{F})_{x \in \overline{B(0,R)}}$ to produce an ${\frac{\eta}{p^2}}$-strongly convex function $F \in C^{1,1}_c(\R^n)$ satisfying
$J_xF = J_x \bar{F}$ for all $x \in \overline{B(0,R)}$ and $\Lip(\nabla F) \leq (q+1)\Lip (\nabla \bar{F};B(0,2R))\leq C_1q^2 \frac{M^2}{\eta}$. Because $E \subset B(0,R)$ and $J_x \bar{F} \in \Gamma_{{\frac{\eta}{p}}}^{\overline{B(0,R)}}(x;\bar{F})$ for $x \in B(0,R)$, this implies $F(x)=f(x)$ for all $x \in E$. This completes the proof of Theorem \ref{scthm}.

\end{proof}

\section{Main Results in Dimension $n=1$ (Theorem \ref{1dthmsc})} \label{sec:1dthm}

We begin by proving a version of Theorem \ref{1dthmsc} for finite $E$  and $\eta = 0$. In Section \ref{sec:fin1d}, we complete the proof of Theorem \ref{1dthmsc}.

\begin{THM} \label{1dthmfin}
Let $E \subset \R$ be finite, and let the function $f: E \to \R$. Suppose that for every $S \subset E$ satisfying $\#S \leq 5$, there exists a function $F^S \in C^{1,1}_c(\R)$ satisfying $F^S|_S=f|_S$ and $\Lip(\nabla F^S) \leq M$. Then there exists a function $F \in C^{1,1}_c(\R)$ satisfying $F|_E = f|_E$ and $\Lip (\nabla F) \leq 2M$.
\end{THM}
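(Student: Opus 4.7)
Enumerate $E = \{x_1 < x_2 < \cdots < x_N\}$ and write $D_{ij} := D^f_{x_i x_j}$. My plan is to construct a single nondecreasing $2M$-Lipschitz function $g : \R \to \R$ satisfying $\int_{x_i}^{x_{i+1}} g(t)\,dt = f(x_{i+1}) - f(x_i)$ for every $i$; then $F(x) := f(x_1) + \int_{x_1}^x g(t)\,dt$ belongs to $C^{1,1}_c(\R)$, extends $f|_E$, and has $\Lip(F') \leq 2M$. Equivalently, setting $\gamma_i := g(x_i)$ and $P_{x_i}(t) := f(x_i) + \gamma_i(t - x_i)$, one could instead apply Theorem \ref{azagra0}: a direct computation shows that $P_{x_i} \sim_{2M} P_{x_j}$ for $i < j$ is equivalent to
\begin{align*}
D_{ij} - \gamma_i \;\geq\; \frac{(\gamma_j - \gamma_i)^2}{4M(x_j - x_i)} \qquad \text{and} \qquad \gamma_j - D_{ij} \;\geq\; \frac{(\gamma_j - \gamma_i)^2}{4M(x_j - x_i)},
\end{align*}
which already force $\gamma_i \leq \gamma_j$ and $\gamma_j - \gamma_i \leq 2M(x_j - x_i)$.

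A short extremal calculation reduces the problem to adjacent pairs. On an interval $[x_i, x_{i+1}]$, a nondecreasing $2M$-Lipschitz function with prescribed endpoint values $\gamma_i \leq \gamma_{i+1}$ can realize precisely the average values in
\begin{align*}
\left[\, \gamma_i + \frac{(\gamma_{i+1} - \gamma_i)^2}{4M(x_{i+1} - x_i)},\;\; \gamma_{i+1} - \frac{(\gamma_{i+1} - \gamma_i)^2}{4M(x_{i+1} - x_i)}\, \right],
\end{align*}
the extremes being attained by bang-bang profiles (flat on one sub-interval and linear with slope $2M$ on the other). Consequently, once the $\gamma_i$ are chosen so that the two inequalities above hold for every adjacent pair $(i, i+1)$, such piecewise constructions may be glued (across matching endpoint values) into a single global nondecreasing $2M$-Lipschitz $g$, from which the pairwise relations $P_{x_i} \sim_{2M} P_{x_j}$ for non-adjacent indices follow automatically via Corollary \ref{neccor}.

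To produce the sequence $\gamma_1 \leq \cdots \leq \gamma_N$, I would use the $5$-point hypothesis applied to local windows. For each $i$, let $S_i \subset E$ be any $5$-subset containing $x_i$ together with as many of its nearest neighbors as possible; the hypothesis furnishes a convex $F^{S_i} \in C^{1,1}_c(\R)$ with $\Lip((F^{S_i})') \leq M$ interpolating $f$ on $S_i$. A natural candidate is $\gamma_i := (F^{S_i})'(x_i)$, refined if needed as a canonical supremum or infimum of $(F^{S})'(x_i)$ over all admissible $5$-subsets $S \ni x_i$, so as to enforce both monotonicity across $i$ and the quadratic inequalities. Helly's theorem on $\R$, applied to the intervals of admissible slope values at each $x_i$, is the expected vehicle for showing that these choices are well-defined and jointly consistent.

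The main obstacle is verifying the two quadratic inequalities for adjacent pairs at the loosened constant $2M$ rather than $M$. Because $F^{S_i}$ and $F^{S_{i+1}}$ arise from different $5$-point windows, the globally selected pair $(\gamma_i, \gamma_{i+1})$ need not be the endpoint jets of any single hypothesized extension, and the factor of $2$ in the target bound $\Lip(\nabla F) \leq 2M$ must absorb precisely this mismatch. I anticipate a case analysis on the relative size of $\gamma_{i+1} - \gamma_i$ compared to $2M(x_{i+1} - x_i)$, and on the location of $D_{i,i+1}$ within the admissible range on $[x_i, x_{i+1}]$, with the $5$-point window $\{x_{i-1}, x_i, x_{i+1}, x_{i+2}, x_{i+3}\}$ (or a symmetric variant) providing just enough simultaneous control over the pair $(i, i+1)$ and at least one neighbor to close the estimate. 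The sharpness example $f = |\cdot|$ on $\{-2, -1, 0, 1, 2\}$ in the Remark confirms that $5$ is indispensable: from only $4$-point data, the jet at $0$ is underdetermined and no convex $C^{1,1}$ extension need exist.
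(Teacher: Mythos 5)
Your proposal correctly identifies the high-level strategy — enumerate $E$, pick slopes $\gamma_i$ at each $x_i$ from local $5$-point data, reduce to verifying the two quadratic inequalities at $2M$ for adjacent pairs only, then build the extension (either directly as $F(x) = f(x_1) + \int_{x_1}^x g$ with a nondecreasing $2M$-Lipschitz $g$, or via Theorem~\ref{azagra0}). Your preliminary computations are correct: the equivalence of $P_{x_i}\sim_{2M}P_{x_j}$ with the two displayed inequalities, the extremal interval of realizable averages for monotone $2M$-Lipschitz functions with prescribed endpoints, and the automatic closure across non-adjacent pairs once a global monotone Lipschitz $g$ is glued. All of this matches the skeleton of the paper's argument.

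However, the core of the theorem is missing. You explicitly flag the verification of the adjacent quadratic inequalities as ``the main obstacle'' and ``anticipate a case analysis,'' but do not carry it out, and the choice of $\gamma_i$ is left unspecified. The candidate $\gamma_i := (F^{S_i})'(x_i)$ fails by itself precisely for the reason you state: $(\gamma_i, \gamma_{i+1})$ comes from two \emph{different} interpolants, so nothing guarantees the coupled constraint on the pair. The proposed fix — ``refine as a canonical supremum or infimum of $(F^S)'(x_i)$ over all admissible $5$-subsets'' together with an appeal to Helly on intervals at each $x_i$ — does not address this: Helly at a single point only controls constraints localized there, while the two quadratic inequalities couple $\gamma_i$ with $\gamma_{i\pm 1}$, so a pointwise intersection argument cannot by itself produce a jointly consistent choice. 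The paper instead defines explicit auxiliary slopes $P_i^\ell, P_i^r$ (built only from neighboring divided differences) and from them $P_i^+, P_i^-$ by a square-root formula, proves via the symmetric $5$-point window $\{x_{i-2},\dots,x_{i+2}\}$ that the interval $\bigl[\max\{\nabla P_i^-, D_{i-1,i}\},\ \min\{\nabla P_i^+, D_{i,i+1}\}\bigr]$ is nonempty (Lemma~\ref{lem:symg}), sets $\gamma_i$ to its midpoint, and then runs a two-case argument (Lemma~\ref{1drel}, built on the monotonicity observation Lemma~\ref{mono} and the reflection construction Lemma~\ref{areflm}) to show $\gamma_{i-1}\sim_{2M}\gamma_i$; this is where the loosening from $M$ to $2M$ is actually spent. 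Until you supply a concrete rule for $\gamma_i$ and prove the adjacent inequalities for it, the proof has a genuine gap at its most substantial step.
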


\subsection{Technical Tools in Dimension $n=1$}

Let $E \subset \R$ be finite and $f: E \to \R$. Assuming the validity of a finiteness hypothesis on $f$, as per Theorem \ref{1dthmfin}, our aim is to find $(\gamma_x)_{x \in E} \in Wh(E)$ that satisfies $\gamma_x(x) =f(x)$ and $\gamma_x \sim_M \gamma_y$ for all $x,y \in E$, with a uniform constant $M$; then we can apply Theorem \ref{azagra0} to produce a convex extension of $f$ in $C^{1,1}_c(\R)$. In our first result, we deduce a transitivity property of the relation $\sim_M$ in one dimension. According to this, we only need to confirm the compatibility of $\gamma_x$ at adjacent points of $E$. 

\begin{lem} \label{translm}
    Let $x,y,z \in \R$ satisfy $x<y<z$, and suppose $\gamma_x \in \mathcal{R}_x$, $\gamma_y \in \mathcal{R}_y$, $\gamma_z \in \mathcal{R}_z$ satisfy $\gamma_x  \sim_M \gamma_y$, and $\gamma_y  \sim_M \gamma_z$; then $\gamma_x  \sim_M \gamma_z$.
\end{lem}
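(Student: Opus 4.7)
The plan is to construct a single convex $C^{1,1}_c(\R)$ function $F$ whose $1$-jets at $x$ and $z$ are $\gamma_x$ and $\gamma_z$ respectively, then invoke Corollary \ref{neccor} to conclude $\gamma_x \sim_M \gamma_z$. Since the necessary condition already packages the desired pair of inequalities, once such an $F$ exists we are immediately done.

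To produce $F$, I will first apply Theorem \ref{azagra0} twice. Taking $E = \{x,y\}$, the single hypothesis $\gamma_x \sim_M \gamma_y$ supplies exactly the pair of inequalities \eqref{cw11} needed (the diagonal pairs being trivial), so there is a convex $F_1 \in C^{1,1}_c(\R)$ with $J_xF_1 = \gamma_x$, $J_yF_1 = \gamma_y$, and $\Lip(F_1') \leq M$. Analogously, applied to $\{y,z\}$, there is a convex $F_2 \in C^{1,1}_c(\R)$ with $J_yF_2 = \gamma_y$, $J_zF_2 = \gamma_z$, and $\Lip(F_2') \leq M$. Then I define
\[
F(t) := \begin{cases} F_1(t), & t \leq y, \\ F_2(t), & t \geq y. \end{cases}
\]
The definition is consistent at $t=y$ since $F_1(y)=\gamma_y(y)=F_2(y)$, and $F \in C^1(\R)$ because $F_1'(y)=\nabla \gamma_y = F_2'(y)$.

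It remains to check $F \in C^{1,1}_c(\R)$. Convexity in dimension one is equivalent to $F'$ being non-decreasing; both $F_1'$ and $F_2'$ are non-decreasing, and they agree at $y$, so the glued derivative is non-decreasing on all of $\R$. For the Lipschitz bound on $F'$, the only non-trivial case is $a<y<b$, where
\[
|F'(a) - F'(b)| \leq |F_1'(a) - F_1'(y)| + |F_2'(y) - F_2'(b)| \leq M(y-a) + M(b-y) = M(b-a).
\]
Thus $F \in C^{1,1}_c(\R)$ with $\Lip(F') \leq M$, $J_xF = \gamma_x$, and $J_zF = \gamma_z$. Applying Corollary \ref{neccor} to $F$ yields $\gamma_x = J_xF \sim_M J_zF = \gamma_z$, as required.

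There is no substantial obstacle; the only point requiring care is the verification that the piecewise gluing preserves both convexity and the Lipschitz constant of the derivative, which is where the one-dimensional hypothesis $x<y<z$ is essential (the argument is not available in higher dimensions, which is why this transitivity lemma is one-dimensional).
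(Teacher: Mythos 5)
Your proof is correct, but it takes a genuinely different route from the paper's. The paper proves the transitivity directly by algebra: it expands $\frac{1}{2M}|\nabla\gamma_z - \nabla\gamma_x|^2$, applies \eqref{a} and \eqref{b} for the pairs $(x,y)$ and $(y,z)$, and then observes that the cross term $\langle \nabla\gamma_z - \nabla\gamma_y,\; x - y + \frac{1}{M}(\nabla\gamma_y - \nabla\gamma_x)\rangle$ is non-positive precisely because $x<y$ forces $x-y+\frac{1}{M}(\nabla\gamma_y-\nabla\gamma_x)\le 0$ and $y<z$ forces $\nabla\gamma_z - \nabla\gamma_y \ge 0$. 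That computation is entirely self-contained at the level of $1$-jets and makes the role of the ordering hypothesis transparent. You instead go through the constructive extension theorem of Azagra--Le Gruyer--Mudarra (Theorem \ref{azagra0}): build convex $C^{1,1}_c$ interpolants $F_1$ on $\{x,y\}$ and $F_2$ on $\{y,z\}$, glue them at $y$ (where function values and derivatives match), verify convexity via monotonicity of the glued derivative and the Lipschitz bound by splitting at $y$, and finally invoke Corollary \ref{neccor} to read off $\gamma_x \sim_M \gamma_z$ from the single global function. All the steps check out: the diagonal hypotheses in Theorem \ref{azagra0} are trivial for two-point sets, the $C^1$ matching at $y$ is exactly the jet condition $J_yF_1 = \gamma_y = J_yF_2$, and the Lipschitz argument correctly exploits monotonicity of the glued derivative. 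Your approach is more geometric and makes clear that the relation $\sim_M$ is inherited from a genuine $C^{1,1}_c$ extension, at the cost of relying on the heavier Theorem \ref{azagra0}; the paper's approach is more elementary and pinpoints exactly which sign condition uses $x<y<z$, which directly motivates the Remark following the lemma about why this fails in higher dimensions.
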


\begin{proof}
Suppose $x<y<z$, $\gamma_x  \sim_M \gamma_y$, and $\gamma_y  \sim_M \gamma_z$; we have
\begin{align}
    \frac{1}{2M} |\nabla \gamma_z - \nabla \gamma_x|^2 &= \frac{1}{2M} \big( |\nabla \gamma_z - \nabla \gamma_y|^2 +|\nabla \gamma_y - \nabla \gamma_x|^2 +2 \langle \nabla \gamma_z -\nabla \gamma_y,\nabla \gamma_y - \nabla \gamma_x \rangle\big) \nonumber \\
    &\leq \big( \gamma_y(y) - \gamma_z(y) \big) + \big( \gamma_x(x) - \gamma_y(x) \big)+ \frac{1}{M} \big\langle \nabla \gamma_z -\nabla \gamma_y,\nabla \gamma_y - \nabla \gamma_x \big\rangle \nonumber \\
    &=\gamma_x(x) - \gamma_z(x) + \bigg\langle \nabla \gamma_z - \nabla \gamma_y, x - y + \frac{1}{M} \big(\nabla \gamma_y - \nabla \gamma_x\big)  \bigg\rangle. \label{c}
\end{align}
Because $x<y$ and $\gamma_x  \sim_M \gamma_y$, we have $\frac{\nabla \gamma_y - \nabla \gamma_x}{M} \leq y-x$, which implies $x - y + \frac{1}{M} (\nabla \gamma_y - \nabla \gamma_x)\leq 0$. Because $\nabla \gamma_z - \nabla \gamma_y>0$, the last term in (\ref{c}) must be negative, implying $ \frac{1}{2M} |\nabla \gamma_z - \nabla \gamma_x|^2 \leq \gamma_x(x) - \gamma_z(x)$. The proof of the inequality $ \frac{1}{2M} |\nabla \gamma_z - \nabla \gamma_x|^2 \leq \gamma_z(z) - \gamma_x(z)$ follows analogously.
\end{proof}

\begin{remark}
    A transitivity result relying only on the configuration of points cannot be transferred to higher dimensions ($n>1$) because we cannot ensure $\langle \nabla \gamma_z - \nabla \gamma_y, x - y + \frac{1}{M} \big(\nabla \gamma_y - \nabla \gamma_x\big)\rangle$ is non-positive without further hypotheses on $\gamma_x, \gamma_y,$ and $\gamma_z$. Even if $x,y,z$ are ordered points on a line in $\R^n$ (i.e. there exists $t \in (0,1)$ such that $y = tx + (1-t)z$), the quantity $\langle \nabla \gamma_z - \nabla \gamma_y, x - y + \frac{1}{M} \big(\nabla \gamma_y - \nabla \gamma_x\big)\rangle$ need not be non-positive. However, if we assume $x,y,z \in \R^n$ satisfy $\langle y-x, z-y \rangle >0$ \underline{and} $(\nabla \gamma_z - \nabla \gamma_y) = \lambda (\nabla \gamma_y - \nabla \gamma_x)$ for $\lambda>0$, then we can show $\gamma_x \sim_M \gamma_y$, $\gamma_y \sim_M \gamma_z$ implies $\gamma_x \sim_M \gamma_z$.
\end{remark}

\begin{lem} \label{areflm}
    Let $E \subset \R$ be finite, $f:E \to \R$, and $M > 0$. For distinct $x, y \in E$, let $\gamma_x \in \Gamma^0(x)$ satisfy 
    \begin{equation}
        \label{as0} 0 \leq \langle D_{xy}^f - \nabla \gamma_x, y-x \rangle \leq \frac{M}{2} |y-x|^2.
    \end{equation}
If $\gamma_y^x \in \Gamma^0(y)$ is defined by 
    \begin{align*}
    &\nabla \gamma_y^x = \nabla \gamma_x + \sqrt{2M\langle D^f_{xy} -\nabla \gamma_x, y-x\rangle} && \text{ if }x<y, \text{ and}\\
    &\nabla \gamma_y^x = \nabla \gamma_x - \sqrt{2M\langle \nabla \gamma_x-D^f_{xy} , x-y\rangle} && \text{ if } x>y,
    \end{align*}
then $ \gamma_x \sim_M \gamma_y^x$. 
\end{lem}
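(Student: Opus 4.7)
The plan is to verify the two inequalities in $\gamma_x \sim_M \gamma_y^x$ by direct computation, using the explicit jet formulas $\gamma_x(z) = f(x) + \nabla\gamma_x\,(z-x)$ and $\gamma_y^x(z) = f(y) + \nabla\gamma_y^x\,(z-y)$ together with the defining identity for $\nabla\gamma_y^x$. I would dispose of the case $x<y$ first; the case $x>y$ reduces to the same computation by symmetric sign bookkeeping.

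Fix $x<y$, and set $s := (D_{xy}^f - \nabla\gamma_x)(y-x)$. Hypothesis \eqref{as0} reads $0 \le s \le \tfrac{M}{2}(y-x)^2$, and squaring the definition of $\nabla\gamma_y^x$ gives $(\nabla\gamma_y^x - \nabla\gamma_x)^2 = 2Ms$. The computation
$$\gamma_y^x(y) - \gamma_x(y) = f(y) - f(x) - \nabla\gamma_x(y-x) = s = \tfrac{1}{2M}(\nabla\gamma_y^x - \nabla\gamma_x)^2$$
yields inequality (b) of $\sim_M$ with equality. For (a), an analogous expansion gives $\gamma_x(x) - \gamma_y^x(x) = (y-x)(\nabla\gamma_y^x - D_{xy}^f)$, and since $y-x>0$ the required inequality rearranges to $\sqrt{2Ms} \ge 2s/(y-x)$; squaring this reproduces exactly the upper bound $s \le \tfrac{M}{2}(y-x)^2$ assumed in \eqref{as0}, so (a) holds as well.

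The case $x>y$ follows from the same two calculations: the minus sign in the definition of $\nabla\gamma_y^x$ is chosen precisely so that $(\nabla\gamma_y^x - \nabla\gamma_x)^2 = 2M(\nabla\gamma_x - D_{xy}^f)(x-y) = 2Ms$ still holds with the same $s\ge 0$, and every sign in the expansions of $\gamma_y^x(y)-\gamma_x(y)$ and $\gamma_x(x)-\gamma_y^x(x)$ flips coherently. There is no real obstacle here — the lemma is essentially a design statement. The formula for $\nabla\gamma_y^x$ is calibrated so that one of the two inequalities in $\sim_M$ holds with equality, and the upper bound in \eqref{as0} is exactly the slack needed for the other to hold; the nonnegativity half of \eqref{as0} enters only to guarantee that the square root defining $\nabla\gamma_y^x$ is real.
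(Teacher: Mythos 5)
Your proof is correct, and its skeleton matches the paper's: both verify inequality (\ref{b}) holds with equality (by construction of $\nabla\gamma_y^x$), and both derive inequality (\ref{a}) from the upper bound in \eqref{as0}. Where you diverge is in how (\ref{a}) is dispatched. The paper recasts (\ref{a}) as a quadratic inequality in the unknown $\nabla\gamma_y$, computes its discriminant (showing it is nonnegative thanks to \eqref{as0}), exhibits the interval between the two roots, and then separately verifies via a pair of auxiliary estimates \eqref{as2} and \eqref{as4} --- one of which relies on a small observation about the map $t \mapsto 2\omega - \sqrt{4\omega^2 - 4\omega t}$ --- that $\nabla\gamma_y^x$ lies in that interval. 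You instead substitute the explicit formula for $\nabla\gamma_y^x$ directly into $\gamma_x(x) - \gamma_y^x(x) = (y-x)(\nabla\gamma_y^x - D_{xy}^f)$, cancel, and reduce (\ref{a}) to the single scalar inequality $\sqrt{2Ms} \ge 2s/(y-x)$ with $s := (D_{xy}^f - \nabla\gamma_x)(y-x)$, which upon squaring is precisely the hypothesis $s \le \tfrac{M}{2}(y-x)^2$. This is noticeably shorter and avoids the discriminant bookkeeping entirely; the paper's quadratic framing does make the interval of all admissible $\nabla\gamma_y$ visible, which it reuses later, but for proving this lemma alone your direct computation is the cleaner route. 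Your dispatch of the case $x > y$ by sign-symmetry is correct --- the same quantity $s$ governs both sides and stays nonnegative.
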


\begin{proof}
Suppose $x<y$. By definition of $\gamma_y^x$ and the fact that $\gamma_x \in \Gamma^0(x)$, $\gamma_y^x \in \Gamma^0(y)$, we have
\[
    \frac{1}{2M} \left| \nabla \gamma_y^x - \nabla \gamma_x \right|^2 =  \langle D^f_{xy} - \nabla \gamma_x, y - x \rangle = \gamma_y^x(y) - \gamma_x(y),
\]
implying (\ref{b}) holds with equality for $\gamma_y:= \gamma_y^x$ and $\gamma_x: = \gamma_x$. 

We next show that (\ref{a}) holds for $\gamma_y:= \gamma_y^x$ and $\gamma_x: = \gamma_x$, proving $\gamma_y^x \sim_M \gamma_x$. From (\ref{as0}) and the definition of $\gamma_y^x$, we have
\begin{align}
    \nabla \gamma_y^x &\leq \nabla \gamma_x + M(y - x) \nonumber \\
    &\leq \nabla \gamma_x + M(y-x) + \sqrt{-2M  \langle D^f_{xy} -\nabla \gamma_x ,y-x \rangle + M^2(y-x)^2}, \label{as2}
\end{align}
where the non-negativity of the term inside the square root also follows from (\ref{as0}). Further,
\begin{align}
\notag{}
    \nabla \gamma_y^x &=  \nabla \gamma_x + \sqrt{2M \langle D^f_{xy} - \nabla \gamma_x , y - x\rangle} \\
    &\geq \nabla \gamma_x + M(y-x) - \sqrt{-2M  \langle D^f_{xy} -\nabla \gamma_x ,y-x \rangle + M^2(y-x)^2}.
    \label{as4}
\end{align}
The latter inequality follows from the observation that for $\omega>0$, $\sqrt{4 \omega t} \geq 2 \omega - \sqrt{4 \omega^2-4\omega t} $ for all $t \in [0, \omega]$. Let $\omega := \frac{M}{2}(y - x)$ and $t := D^f_{xy} - \nabla \gamma_x$ ((\ref{as0}) ensures $t \in [0,\omega]$), and the result follows.

For $\gamma_x \in \Gamma^0(x)$, $\gamma_y \in \Gamma^0(y)$, and $x<y$, (\ref{a}) holds if
$$
f(x) - f(y) - \langle \nabla \gamma_y, x-y \rangle \geq \frac{1}{2M}|\nabla \gamma_x -\nabla \gamma_y|^2.
$$
This is expression is equivalent to a quadratic equation in the variable $\nabla \gamma_{y}$:
\begin{align}
(\nabla \gamma_{y})^2 + (\nabla \gamma_{y}) \left(-2 \nabla \gamma_{x}+ 2M(x-y) \right) +\left( (\nabla \gamma_{x})^2 + 2M(f(y) - f(x))\right) \leq 0. \label{as3}
\end{align}
The discriminant $\Delta \in \R$ of this quadratic equation is non-negative thanks to (\ref{as0}); indeed,
\begin{align*}
   \Delta &= \left(-2 \nabla \gamma_{x}+ 2M(x-y) \right)^2 - 4\left( (\nabla \gamma_{x})^2 + 2M(f(y) - f(x))\right) \\
    & = 4M \left( -2 \big( f(y) - f(x) - \langle \nabla \gamma_{x} ,y-x \rangle \big) + M(y-x)^2 \right) \\
    &= -8 M \langle D^f_{xy} - \nabla \gamma_x , y-x \rangle + 4M^2 (y-x)^2 \geq 0.
\end{align*}
Thus, \eqref{as3} is equivalent to 
\begin{align*}
    \nabla \gamma_{y} \in \bigg[ &\nabla \gamma_{x} + M(y-x) - \sqrt{-2M \langle D_{xy}^f-  \nabla \gamma_{x} ,y-x \rangle + M^2(y-x)^2}, \\
    & \nabla \gamma_{x} + M(y-x) + \sqrt{-2M \langle D_{xy}^f-  \nabla \gamma_{x} ,y-x \rangle + M^2(y-x)^2} \bigg],
\end{align*}
which is valid for $\gamma_y := \gamma_y^x$ thanks to \eqref{as2} and \eqref{as4}. This completes the proof of (\ref{a}), and with it the proof that $\gamma_x \sim_M \gamma_y^x$

The proof that if $x>y$ and $\gamma_y^x \in \Gamma^0(y)$ satisfies $\nabla \gamma_y^x = \nabla \gamma_x - \sqrt{2M\langle \nabla \gamma_x-D^f_{xy} , x-y\rangle}$, then $\gamma_x \sim_M \gamma_y^x$ follows analogously.
\end{proof}

\subsection{Proof of Theorem \ref{1dthmfin}}

\begin{proof}[Proof of Theorem \ref{1dthmfin}]

Because $E$ is finite, we enumerate $E= \{x_1, x_2,\dots, x_N\}$, with $x_1<x_2 < \dots <x_N$. We may assume $N > 5$, else the result is trivial. For distinct $i, j \in \{1,\dots, N\}$, let 
\[
D_{i,j}:=D^f_{x_i x_j}= \frac{f(x_j) - f(x_i)}{x_j-x_i}.
\]
By the finiteness hypothesis, the restriction of $f$ to any $3$ consecutive points of $E$ admits a convex extension, hence
\begin{equation*}
D_{1,2} \leq D_{2,3} \leq \dots \leq D_{N-1,N}.
\end{equation*}

For $i \in \{ 1, \dots, N-1 \}$, let $P_i^\ell \in \Gamma^0(x_i)$, and for $i \in \{ 2, \dots, N \}$, let $P_i^r \in \Gamma^0(x_i)$ satisfy 
\begin{align}
    &\nabla P_1^\ell := D_{1,2}- \frac{M}{2} (x_2 - x_1), \nonumber \\
    &\nabla P_{i}^\ell := \max \left\{ D_{i-1,i}, D_{i,i+1}- \frac{M}{2}(x_{i+1} - x_{i}) \right\} &&(i \in \{2,\dots,N-1\}), \label{pil}\\
    & \nabla P_N^r := D_{N-1,N}+ \frac{M}{2} (x_N - x_{N-1}), 
    \text{ and}\nonumber\\
    &\nabla P_{i}^r := \min \left\{D_{i,i+1}, D_{i-1,i} + \frac{M}{2}(x_{i} - x_{i-1}) \right\} &&(i \in \{2,\dots,N-1\}) \label{pir}.
\end{align}
For $i \in \{2,\dots,N-1\}$, let $P_i^+, P_i^- \in \Gamma^0(x_i)$ satisfy
\begin{align}
    &\nabla P_i^+ =  \nabla P_{i-1}^\ell + \sqrt{2M \langle D_{i-1,i} - \nabla P_{i-1}^\ell , x_i - x_{i-1}\rangle},\text{ and} \label{defp1} \\
    &\nabla P_i^- = \nabla P_{i+1}^r - \sqrt{2M\langle \nabla P_{i+1}^r - D_{i,i+1}, x_{i+1} - x_i\rangle} \label{defp2}.
\end{align}

We use the following monotonicity result in the estimates that follow.
\begin{lem} \label{mono}
    For $\omega>0$,  the function $h(t) = -t 
    +\sqrt{4\omega t}$ is an increasing function on $[0,\omega]$. Thus, for $0 \leq t_1 \leq t_2\leq \omega$, we have $h(t_1) \leq h(t_2)$, and if $t_1 < t_2$ then $h(t_1) < h(t_2)$.
\end{lem}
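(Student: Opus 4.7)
My plan is to prove Lemma \ref{mono} by a direct calculus argument: compute the derivative of $h$, verify it is strictly positive on the interior of the interval, and then invoke a standard monotonicity consequence of the mean value theorem.

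First I would write $h(t) = -t + \sqrt{4\omega t} = -t + 2\sqrt{\omega}\sqrt{t}$, which is continuous on $[0,\omega]$ and differentiable on $(0,\omega]$. Differentiating gives
\[
h'(t) = -1 + \sqrt{\omega/t} \qquad (t \in (0,\omega]).
\]
For $t \in (0,\omega)$ we have $\omega/t > 1$ so $\sqrt{\omega/t} > 1$, hence $h'(t) > 0$. (At the right endpoint $t=\omega$, $h'(\omega)=0$, which causes no issue.)

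Since $h$ is continuous on $[0,\omega]$ and has strictly positive derivative on the open interval $(0,\omega)$, the mean value theorem yields $h(t_2) - h(t_1) = h'(\xi)(t_2 - t_1)$ for some $\xi \in (t_1, t_2) \subset (0,\omega)$ whenever $0 \leq t_1 < t_2 \leq \omega$; since $h'(\xi) > 0$ and $t_2 - t_1 > 0$, we obtain $h(t_1) < h(t_2)$. This gives strict monotonicity, and the weak inequality $h(t_1) \leq h(t_2)$ for $t_1 \leq t_2$ follows immediately. I do not anticipate any obstacle here; the only mild subtlety is that $h$ is not differentiable at $t=0$ (the derivative blows up), but continuity on $[0,\omega]$ together with the mean value theorem applied on $(t_1, t_2)$ with $t_1 \geq 0$ already suffices.
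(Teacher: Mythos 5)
Your proof is correct, and it is exactly the standard elementary argument that the paper leaves implicit with the word ``Trivial.'' The derivative computation $h'(t) = -1 + \sqrt{\omega/t}$ is right, and the mean value theorem step is applied carefully (in particular, noting that $\xi \in (t_1,t_2)$ stays strictly below $\omega$, so $h'(\xi)>0$ even when $t_2=\omega$).
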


\begin{proof}
    Trivial.
\end{proof}

For $i \in \{2, \dots, N-1\}$, by definition (\ref{pil}), we have  $0 \leq D_{i-1,i} - \nabla P_{i-1}^\ell \leq  \frac{M}{2}(x_i -x_{i-1})$. So, we can apply Lemma \ref{mono} with $t_1: =D_{i-1,i} - \nabla P_{i-1}^\ell$ and $t_2:= \omega:=\frac{M}{2}(x_i -x_{i-1})$; then  $h(t_1) \leq h(t_2)$, so
\begin{align}
    \nabla P_i^+ &= \nabla P_{i-1}^\ell + \sqrt{2M \langle D_{i-1,i} - \nabla P_{i-1}^\ell, x_{i}-x_{i-1} \rangle} \nonumber \\
    &\leq D_{i-1,i}-\frac{M}{2}(x_i -x_{i-1}) + \sqrt{2M \left\langle \frac{M}{2}(x_i -x_{i-1}), x_{i}-x_{i-1} \right\rangle} \nonumber \\
    &= D_{i-1,i}+\frac{M}{2}(x_i -x_{i-1}) \quad \quad \quad \quad (i \in \{2, \dots, N-1\}).
\label{T1}
\end{align}
Similarly, by the definition (\ref{pir}) of $P_{i+1}^r$, we have $0 \leq \nabla P_{i+1}^r - D_{i,i+1} \leq \frac{M}{2}(x_{i+1}-x_i)$, so we can apply Lemma \ref{mono} with $t_1:=\nabla P_{i+1}^r - D_{i,i+1}$ and $t_2:= \omega:= \frac{M}{2}(x_{i+1}-x_i)$; then  $-h(t_1) \geq -h(t_2)$, so
\begin{align}
    \nabla P_i^- &=\nabla P_{i+1}^r - \sqrt{2M\langle \nabla P_{i+1}^r - D_{i,i+1}, x_{i+1} - x_i\rangle} \nonumber \\
    &\geq D_{i,i+1} + \frac{M}{2}(x_{i+1} - x_i) - \sqrt{2M \left\langle  \frac{M}{2}(x_{i+1} - x_i), x_{i+1}-x_i \right\rangle} \nonumber \\
    &= D_{i,i+1} - \frac{M}{2}(x_{i+1} - x_{i}) \quad \quad \quad \quad (i \in \{2, \dots, N-1\}). \label{T2}
\end{align}

\begin{lem} \label{lem:pilpir}
    For $i \in \{ 2, \dots, N-1 \}$, $P_{i-1}^\ell \in \Gamma^0(x_{i-1})$ satisfies $\nabla P_{i-1}^\ell \geq \nabla P $ for all $P \in \Gamma^0(x_{i-1})$ satisfying $ P \sim_M P_i^+ $. Similarly, $P_{i+1}^r \in \Gamma^0(x_{i+1})$ satisfies $\nabla P_{i+1}^r \leq \nabla P$ for all $P \in \Gamma^0(x_{i+1})$ satisfying $P \sim_M P_i^-$.
\end{lem}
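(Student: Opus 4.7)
The plan is to prove the first assertion by extracting from $P \sim_M P_i^+$ a quadratic inequality in $\nabla P$ and then showing that $\nabla P_{i-1}^\ell$ is the largest value satisfying it; the second assertion follows by a parallel computation. Fix $i \in \{2,\ldots,N-1\}$ and $P \in \Gamma^0(x_{i-1})$ with $P \sim_M P_i^+$. Abbreviate $h := x_i - x_{i-1}$, $D := D_{i-1,i}$, $\alpha := \nabla P_{i-1}^\ell$, $a := \nabla P$, $b := \nabla P_i^+$, and $s := D - \alpha$. From \eqref{defp1}, $b = \alpha + \sqrt{2Mhs}$; from \eqref{pil} together with the monotonicity $D_{i-2,i-1} \leq D_{i-1,i}$ of divided differences noted at the start of the proof (and, for $i=2$, directly from the definition of $\nabla P_1^\ell$), one has $0 \leq s \leq Mh/2$.

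Condition \eqref{b} from the definition of $P \sim_M P_i^+$ (taking $\gamma_x = P$, $\gamma_y = P_i^+$), upon evaluating $P_i^+(x_i) = f(x_i)$ and $P(x_i) = f(x_{i-1}) + ah$, reduces to $h(D - a) \geq \tfrac{1}{2M}(b-a)^2$, i.e.\ the quadratic constraint
\[
a^2 - 2a(b - Mh) + (b^2 - 2MhD) \leq 0.
\]
The largest admissible $a$ is therefore $a^\ast = (b - Mh) + \sqrt{M^2h^2 - 2Mh(b-D)}$. The key algebraic identity is that, using $b - D = \sqrt{2Mhs} - s$,
\[
M^2h^2 - 2Mh(b-D) \;=\; M^2h^2 + 2Mhs - 2Mh\sqrt{2Mhs} \;=\; \bigl(Mh - \sqrt{2Mhs}\bigr)^2,
\]
while $s \leq Mh/2$ ensures $Mh \geq \sqrt{2Mhs}$ so that the square root of the perfect square is $Mh - \sqrt{2Mhs}$. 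Substituting back gives $a^\ast = b - \sqrt{2Mhs} = \alpha$, which proves $\nabla P \leq \nabla P_{i-1}^\ell$.

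The second assertion is proved by an analogous computation using condition \eqref{a} from $P \sim_M P_i^-$ (with $\gamma_x = P_i^-$, $\gamma_y = P$), the defining relation \eqref{defp2} for $\nabla P_i^-$, and the bound $0 \leq \nabla P_{i+1}^r - D_{i,i+1} \leq \tfrac{M}{2}(x_{i+1} - x_i)$ that follows from \eqref{pir} and the monotonicity of divided differences; the discriminant again collapses to a perfect square, and the smallest admissible value of $\nabla P$ simplifies to $\nabla P_{i+1}^r$. The main obstacle is precisely this bookkeeping: selecting the correct side of the quadratic (largest versus smallest root), matching \eqref{a} or \eqref{b} to the side on which the constraint is binding, and verifying at each step that the bounds on $s$ (and its analogue) are tight enough to open the relevant square root with the right sign.
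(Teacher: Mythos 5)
Your proof is correct. You and the paper both extract the same one-sided inequality from $P \sim_M P_i^+$ (you call it \eqref{b} with $\gamma_x = P$, $\gamma_y = P_i^+$; the paper calls it \eqref{a} with the roles swapped — these are algebraically identical), and both arguments come down to showing that $\nabla P_{i-1}^\ell$ is the extremal slope allowed by that constraint given the bound $0 \le D_{i-1,i} - \nabla P_{i-1}^\ell \le \tfrac{M}{2}(x_i - x_{i-1})$. Where you differ is in execution: you solve the quadratic $a^2 - 2a(b - Mh) + (b^2 - 2MhD) \le 0$ directly and observe the pleasant collapse $M^2h^2 - 2Mh(b-D) = \bigl(Mh - \sqrt{2Mhs}\bigr)^2$, giving the largest root $a^\ast = b - \sqrt{2Mhs} = \nabla P_{i-1}^\ell$; the paper instead avoids solving the quadratic by supposing $\nabla P > \nabla P_{i-1}^\ell$ and invoking the already-stated monotonicity Lemma~\ref{mono} (for $h(t) = -t + \sqrt{4\omega t}$) to contradict the inequality $\nabla P + \sqrt{2M(D_{i-1,i} - \nabla P)(x_i - x_{i-1})} \ge \nabla P_i^+$. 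Your computation is more self-contained and makes the identity $a^\ast = \nabla P_{i-1}^\ell$ explicit; the paper's version is shorter at this point because it reuses Lemma~\ref{mono}, which it also needs elsewhere. Both correctly observe that the required bound on $s := D_{i-1,i} - \nabla P_{i-1}^\ell$ follows from the definition \eqref{pil} together with the monotonicity of the divided differences (and for $i=2$ degenerates to $s = Mh/2$).
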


\begin{proof} 
Let $P \in \Gamma^0(x_{i-1})$ satisfy  $P \sim_M P_i^+$, then from inequality (\ref{a}) with $\gamma_x := P_i^+$ and $\gamma_y:= P$ we have
\begin{align}
\nabla P + \sqrt{2M\langle D_{i-1,i} - \nabla P, x_i -x_{i-1}\rangle} \geq \nabla P_i^+. \label{e1}
\end{align}
If, in addition, $\nabla P> \nabla P_{i-1}^\ell$, then $0 \leq D_{i-1,i} - \nabla P  < D_{i-1,i}-\nabla P_{i-1}^\ell  \leq \frac{M}{2}(x_i -x_{i-1})$, thanks to (\ref{pil}). So, we can apply Lemma \ref{mono} with $t_1 := D_{i-1,i} - \nabla P$, $t_2:=D_{i-1,i}-\nabla P_{i-1}^\ell$, and $\omega:=\frac{M}{2}(x_i -x_{i-1})$ to see that $h(t_1)<h(t_2)$, i.e.,
\begin{align*}
\nabla P + \sqrt{2M\langle D_{i-1,i} - \nabla P, x_i -x_{i-1}\rangle} < \nabla P_{i-1}^\ell + \sqrt{2M\langle D_{i-1,i} - \nabla P_{i-1}^\ell, x_i -x_{i-1}\rangle} = \nabla P_i^+.
\end{align*}
But this contradicts \eqref{e1}. Thus, if $P\sim_M P_i^+$, we must have $\nabla P \leq \nabla P_{i-1}^\ell$. The proof that $\nabla P_{i+1}^r \leq \nabla P$ for all $P \in \Gamma^0(x_{i+1})$ satisfying $P \sim_M P_i^-$ follows analogously.
\end{proof}

\begin{lem} \label{lem:symg}
    Suppose that for every $S \subset E$ satisfying $\#S \leq 5$, there exists a function $F^S \in C^{1,1}_c(\R)$ satisfying $F^S|_S=f|_S$ and $\Lip(\nabla F^S) \leq M$. Then $\max \{\nabla P_i^-, D_{i-1,i} \} \leq  \min \{ \nabla P_i^+,D_{i,i+1} \}$ for $i \in \{ 2, \dots, N-1 \}$.
\end{lem}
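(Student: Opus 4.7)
I would decompose $\max\{\nabla P_i^-, D_{i-1,i}\} \leq \min\{\nabla P_i^+, D_{i,i+1}\}$ into four component inequalities and dispatch three of them from the definitions. The inequality $D_{i-1, i} \leq D_{i, i+1}$ has already been noted at the start of the proof of Theorem \ref{1dthmfin} as a consequence of $3$-point convexity. The inequalities $D_{i-1, i} \leq \nabla P_i^+$ and $\nabla P_i^- \leq D_{i, i+1}$ fall out by substituting $\beta := D_{i-1,i} - \nabla P_{i-1}^\ell$ or $\alpha := \nabla P_{i+1}^r - D_{i,i+1}$ into \eqref{defp1} and \eqref{defp2}; since $\alpha, \beta \in [0, (M/2)(x_i - x_{i-1})]$ by the definitions \eqref{pil} and \eqref{pir} of $P_{i-1}^\ell$ and $P_{i+1}^r$, a direct squaring yields the desired square-root inequalities.

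The main content of the lemma is $\nabla P_i^- \leq \nabla P_i^+$. For this I would apply the finiteness hypothesis to the set $S := \{x_{i-2}, x_{i-1}, x_i, x_{i+1}, x_{i+2}\}$ (or its $3$- or $4$-point variant at the boundary $i \in \{2, N-1\}$), obtaining a convex $F^S \in C^{1,1}_c(\R)$ with $F^S|_S = f|_S$ and $\Lip(\nabla F^S) \leq M$. Setting $p_j := J_{x_j} F^S$, Corollary \ref{neccor} provides $p_j \sim_M p_k$ for all indices in $S$, and $p_j(x_j) = f(x_j)$. I would then aim to prove the stronger sandwich $\nabla P_i^- \leq \nabla p_i \leq \nabla P_i^+$.

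For the upper bound, write $a := \nabla p_{i-1}$, $D := D_{i-1, i}$, $h := x_i - x_{i-1}$. Inequality \eqref{a} applied to $p_{i-2} \sim_M p_{i-1}$ yields $a \geq D_{i-2, i-1}$. Requiring the discriminant of the quadratic in $\nabla p_i$ arising from inequality \eqref{a} for $p_{i-1} \sim_M p_i$ to be non-negative yields $a \geq D - Mh/2$. Combining, $a \geq \nabla P_{i-1}^\ell$. Inequality \eqref{b} of $p_{i-1} \sim_M p_i$ then forces $\nabla p_i \leq g(a)$, where $g(a) := a + \sqrt{2Mh(D - a)}$. A short calculus check shows $g$ is decreasing on $[D - Mh/2, D]$, so $\nabla p_i \leq g(a) \leq g(\nabla P_{i-1}^\ell) = \nabla P_i^+$. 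The mirror argument using $p_i \sim_M p_{i+1}$ and $p_{i+1} \sim_M p_{i+2}$ gives $\nabla p_i \geq \nabla P_i^-$.

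The main obstacle is extracting the sharp upper envelope $g$ for $\nabla p_i$ from the pair of quadratic inequalities defining $p_{i-1} \sim_M p_i$ and confirming that $\nabla P_{i-1}^\ell$ always lies in the monotone-decreasing branch $[D - Mh/2, D]$ of $g$; the explicit formula $\max\{D_{i-2, i-1}, D - Mh/2\}$ in \eqref{pil} is designed precisely to ensure this. The boundary case $i = 2$ uses only four points from $E$ because the lower bound $a \geq D - Mh/2$ alone matches $\nabla P_1^\ell$; the case $i = N-1$ is symmetric.
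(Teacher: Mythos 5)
Your proposal is correct and matches the paper's argument in all essentials: you apply the finiteness hypothesis to the $5$-point window $S_i = \{x_{i-2},\dots,x_{i+2}\}$, invoke Corollary \ref{neccor} for the jets $p_j := J_{x_j}F^{S_i}$, and then use monotonicity of the update map $a \mapsto a + \sqrt{2Mh(D-a)}$ (which is precisely Lemma \ref{mono} after the change of variable $t := D-a$) to sandwich $\nabla p_i$ between $\nabla P_i^-$ and $\nabla P_i^+$. Your decomposition into four component inequalities, with three dispatched directly from \eqref{pil}--\eqref{defp2}, is a mild reorganization: the paper instead exhibits $\nabla J_{x_i}F^{S_i}$ as a common witness lying in $[\max\{\nabla P_i^-, D_{i-1,i}\}, \min\{\nabla P_i^+, D_{i,i+1}\}]$, but the underlying estimates are identical.
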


\begin{proof}

Let the sets $S_i \subset E$ ($i \in \{1, \dots, N\}$) be defined by $S_i: = \{x_{i-2},x_{i-1}, x_i, x_{i+1}, x_{i+2} \}$ for $i \in \{3, \dots, N-2\}$, and $S_1, S_2:= S_3$, $S_N, S_{N-1}:=S_{N-2}$. By the finiteness hypothesis, for $i \in \{1, \dots,N\}$, there exists a function $F^{S_i} \in C^{1,1}_c(\R)$, satisfying $F^{S_i}|_{S_i} = f|_{S_i}$ and $\Lip(\nabla F^{S_i}) \leq M$. 
Because $F^{S_i}|_{S_i} = f|_{S_i}$,
\begin{align*}
    D_{j-1,j} = \frac{F^{S_i}(x_{j}) - F^{S_i}(x_{j-1})}{x_{j}-x_{j-1}} \quad \quad (i \in \{2, \dots, N-1\}, j \in \{i, i+1\}).
\end{align*}
Hence, the convexity of $F^{S_i}$ implies
\begin{equation}
    D_{i-1,i} \leq \nabla J_{x_i}F^{S_i} \leq D_{i,i+1} \quad\quad (i \in \{2,\dots,N-1\}).\label{d1}
\end{equation}

We claim that $\nabla P_i^- \leq \nabla J_{x_i}F^{S_i} \leq \nabla P_i^+$ for $i \in \{ 2, \dots, N-1 \}$. In combination with \eqref{d1}, this implies $\max \{\nabla P_i^-, D_{i-1,i} \} \leq \nabla J_{x_i}F^{S_i} \leq \min \{ \nabla P_i^+,D_{i,i+1} \}$, proving the lemma. 
    
For $i \in \{ 2, \dots, N\}$, Corollary \ref{neccor} implies $J_{x_{i-1}}F^{S_i} \sim_M J_{x_{i}}F^{S_i}$. Letting $\gamma_x: = J_{x_{i}}F^{S_i} $ and $\gamma_y: = J_{x_{i-1}}F^{S_i}$ in \eqref{a}, we see
\begin{align}
    \nabla J_{x_i}F^{S_i} \leq \nabla J_{x_{i-1}}F^{S_i} + \sqrt{2M \langle D_{i-1,i} - \nabla J_{x_{i-1}}F^{S_i}, x_i - x_{i-1}\rangle} \quad \quad  (i \in \{ 2, \dots, N\}).\label{d4}
\end{align}

By Taylor's inequality (\ref{TI}) and the convexity of $F^{S_i}$, $0 \leq F^{S_i}(x_{i-2}) - J_{x_{i-1}}F^{S_i}(x_{i-2})  \leq \frac{M}{2} (x_{i-1} - x_{i-2})^2$ for $i \in \{ 3, \dots, N\}$. Dividing by the positive quantity $(x_{i-1} - x_{i-2})$, we see that 
\begin{equation}\label{newline1}
    0 \leq \nabla J_{x_{i-1}}F^{S_i}-D_{i-2,i-1}  \leq \frac{M}{2} (x_{i-1} - x_{i-2}) \quad \quad ( i \in \{3, \dots, N\}). 
\end{equation}
Similarly, $0 \leq F^{S_i}(x_i) - J_{x_{i-1}} F^{S_i}(x_i) \leq \frac{M}{2} (x_i - x_{i-1})^2$ for $i \in \{2,\dots,N\}$, which implies
\begin{equation} \label{newline2}
0 \leq D_{i-1,i} - \nabla J_{x_{i-1}} F^{S_i} \leq \frac{M}{2} (x_i - x_{i-1}) \quad \quad ( i \in \{2,. \dots, N \}).
\end{equation}
By combining (\ref{newline1}) and (\ref{newline2}), and applying the definition \eqref{pil} of $\nabla P_{i-1}^\ell$, we have
\begin{align}
    &\nabla J_{x_{i-1}}F^{S_i} \geq \max \{ D_{i-1,i} - \frac{M}{2} (x_{i} - x_{i-1}), D_{i-2,i-1} \} = \nabla P_{i-1}^\ell \quad \quad ( i \in \{3, \dots, N\}). \label{d3}
\end{align}
When $i = 2$, (\ref{newline2})  reads as
$$
0 \leq D_{1,2} - \nabla J_{x_1}F^{S_2} \leq  \frac{M}{2}(x_2 - x_1) = D_{1,2} - \nabla P_1^\ell,
$$ 
where the last equality is by the definition of $\nabla P_1^\ell$.
Together with \eqref{d3}, we have  
$$0 \leq D_{i-1,i} - \nabla J_{x_{i-1}}F^{S_i} \leq D_{i-1,i} - \nabla P_{i-1}^\ell \leq \frac{M}{2}(x_i - x_{i-1}) \qquad (i \in \{2, \dots, N\}).
$$
Thus, we can apply Lemma \ref{mono} with $t_1: = D_{i-1,i} - \nabla J_{x_{i-1}}F^{S_i}$, $t_2:=D_{i-1,i} - \nabla P_{i-1}^\ell$, and $\omega:= \frac{M}{2}(x_i - x_{i-1})$ to see $h(t_1) \leq h(t_2)$, and in combination with \eqref{d4},
\begin{align*}
\nabla J_{x_i}F^{S_i} &\leq \nabla J_{x_{i-1}}F^{S_i} + \sqrt{2M \langle D_{i-1,i} - \nabla J_{x_{i-1}}F^{S_i}, x_i - x_{i-1}\rangle} \\
&\leq \nabla P_{i-1}^\ell + \sqrt{2M \langle D_{i-1,i} - \nabla P_{i-1}^\ell, x_i - x_{i-1} \rangle} = \nabla P_{i}^+ \quad \quad (i \in \{2, \dots, N \}).
\end{align*}
By an analogous argument, we deduce $\nabla J_{x_i}F^{S_i} \geq \nabla P_i^-$ for $i \in \{ 1, \dots, N-1 \}$. This completes the proof of the claim, and as described, the lemma.
\end{proof}

We are prepared to choose $\gamma_i \in \Gamma^0(x_i)$ for $i \in \{1, \dots, N\}$. We use the compatibility condition in Lemma \ref{areflm} to inform our choice of derivative for $\gamma_1 \in \Gamma^0(x_1)$ (chosen so that $\gamma_1 \sim_M \gamma_2$) and $\gamma_N \in \Gamma^0(x_N)$ (chosen so that $\gamma_{N-1} \sim_M \gamma_N$). Let $(\gamma_i)_{i=1}^N \in Wh(E)$ be the unique Whitney field of polynomials satisfying 
\begin{align}
    & \gamma_i \in \Gamma^0(x_i), \text{ and} \nonumber \\
    &\nabla \gamma_i = \begin{cases}
        \frac{1}{2} \big( \max \{\nabla P_i^-, D_{i-1,i} \}+\min \{ \nabla P_i^+,D_{i,i+1} \}\big) & i \in \{ 2, \dots,N-1\} \\
        \nabla \gamma_2 - \sqrt{2M \langle \nabla \gamma_2 - D_{1,2}, x_2 - x_1 \rangle} & i =1 \\
        \nabla \gamma_{N-1} + \sqrt{2M \langle D_{N-1,N} - \nabla \gamma_{N-1} , x_N -x_{N-1} \rangle} & i=N.
    \end{cases} \label{choosegam}
\end{align}
As a result of Lemma \ref{lem:symg}, for $i \in \{2, \dots, N-1\}$, we have
\begin{align}
    \max \{\nabla P_i^-, D_{i-1,i} \} \leq \nabla \gamma_i \leq \min \{ \nabla P_i^+,D_{i,i+1} \}. \label{symg}
\end{align}
Consequently, $\nabla \gamma_2 - D_{1,2} \geq 0$, and $D_{N-1,N} - \nabla \gamma_{N-1} \geq 0$, ensuring $\gamma_1$ and $\gamma_2$ are well-defined in \eqref{choosegam}.

In the next several results, we verify additional basic inequalities satisfied by $(\gamma_i)_{i=1}^N$. 

From the previous lemma, we have $\nabla \gamma_i \in [D_{i-1,1},D_{i,i+1}]$ for all $i \in \{2,\cdots,N-1\}$. Because the sequence of divided differences is non-decreasing, we have $\nabla \gamma_2 \leq \cdots \leq \nabla \gamma_{N-1}$. By inspection of the definitions of $\nabla \gamma_1$ and $\nabla \gamma_N$ in (\ref{choosegam}), we obtain the following result:
\begin{cor} \label{cor:grad}
    The polynomials $(\gamma_i)_{i=1}^N$ defined in (\ref{choosegam}) satisfy that their gradients are non-decreasing: 
    $$\nabla \gamma_1 \leq \nabla \gamma_2 \leq \dots \leq \nabla \gamma_N.$$
\end{cor}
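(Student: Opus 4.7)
The proof will be a short direct verification built on the two ingredients already established just above the statement: \eqref{symg}, which places $\nabla\gamma_i$ in the interval $[D_{i-1,i},D_{i,i+1}]$ for interior indices $i\in\{2,\dots,N-1\}$, and the monotonicity $D_{1,2}\leq D_{2,3}\leq\dots\leq D_{N-1,N}$ of divided differences (which follows from the finiteness hypothesis applied to any three consecutive points).

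My plan is to split the chain of inequalities into three pieces. First, for interior indices, I would observe that for each $i\in\{2,\dots,N-2\}$, \eqref{symg} gives $\nabla\gamma_i\leq D_{i,i+1}$, while applying \eqref{symg} at index $i+1$ gives $\nabla\gamma_{i+1}\geq D_{i,i+1}$, so $\nabla\gamma_i\leq\nabla\gamma_{i+1}$. This handles $\nabla\gamma_2\leq\nabla\gamma_3\leq\dots\leq\nabla\gamma_{N-1}$, exactly as the text preceding the corollary sketches.

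The remaining two inequalities $\nabla\gamma_1\leq\nabla\gamma_2$ and $\nabla\gamma_{N-1}\leq\nabla\gamma_N$ follow immediately by inspection of the endpoint clauses of \eqref{choosegam}. Indeed, \eqref{symg} at $i=2$ forces $\nabla\gamma_2\geq D_{1,2}$ and at $i=N-1$ forces $\nabla\gamma_{N-1}\leq D_{N-1,N}$; the expressions inside the two square roots in \eqref{choosegam} are therefore non-negative, so the square roots are well-defined and non-negative, giving
\begin{align*}
\nabla\gamma_1 &= \nabla\gamma_2 - \sqrt{2M\langle \nabla\gamma_2 - D_{1,2},\, x_2-x_1\rangle}\ \leq\ \nabla\gamma_2,\\
\nabla\gamma_N &= \nabla\gamma_{N-1} + \sqrt{2M\langle D_{N-1,N}-\nabla\gamma_{N-1},\, x_N-x_{N-1}\rangle}\ \geq\ \nabla\gamma_{N-1}.
\end{align*}

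There is really no obstacle here: the corollary is a bookkeeping consequence of \eqref{symg} together with the monotonicity of divided differences and the sign of the two square-root corrections used to define the endpoint gradients. The only point worth stating explicitly in the write-up is that the quantities under both square roots are non-negative, so that $\nabla\gamma_1$ and $\nabla\gamma_N$ are genuinely well-defined real numbers — and that observation is what the authors likely mean by ``inspection of the definitions.''
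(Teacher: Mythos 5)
Your proof is correct and matches the paper's own argument: both derive $\nabla\gamma_2\le\dots\le\nabla\gamma_{N-1}$ from \eqref{symg} (your version chains $\nabla\gamma_i\le D_{i,i+1}\le\nabla\gamma_{i+1}$ directly, a minor streamlining of the paper's phrasing via the interval $[D_{i-1,i},D_{i,i+1}]$ and the monotone divided differences), and both handle the two endpoints by observing that the square-root corrections in \eqref{choosegam} are non-negative thanks to \eqref{symg}.
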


\begin{lem}
    For $i \in \{ 3, \dots, N-1\}$, the polynomials $(\gamma_i)_{i=1}^N$ satisfy
    \begin{align}
    &\nabla P_{i-1}^\ell \leq \nabla \gamma_{i-1}, \text{ and}\label{g2}  \\
    &\nabla \gamma_{i} \leq \nabla P_{i}^r. \label{g3} 
\end{align}
\end{lem}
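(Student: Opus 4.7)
The plan is to derive both inequalities directly by combining the definitions of $\nabla P_{i-1}^\ell$ and $\nabla P_i^r$ in (\ref{pil})--(\ref{pir}) with the sandwich bound (\ref{symg}) on $\nabla\gamma_j$ and the monotonicity-derived bounds (\ref{T1})--(\ref{T2}) relating $\nabla P_j^{\pm}$ to the divided differences. No new estimate or Lemma is needed: each of the two quantities defining the max / min in $\nabla P_{i-1}^\ell$ and $\nabla P_i^r$ is dominated by a term already known to lie on the correct side of $\nabla\gamma_{i-1}$ or $\nabla\gamma_i$.

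Concretely, for (\ref{g2}) I would fix $i\in\{3,\dots,N-1\}$ (so that $i-1\in\{2,\dots,N-2\}$ and (\ref{symg}) is available at index $i-1$) and inspect the two arguments of the max in $\nabla P_{i-1}^\ell = \max\{D_{i-2,i-1},\; D_{i-1,i}-\tfrac{M}{2}(x_i-x_{i-1})\}$. The first argument $D_{i-2,i-1}$ is bounded above by $\nabla\gamma_{i-1}$ by the left half of (\ref{symg}) at index $i-1$. For the second, (\ref{T2}) at index $i-1$ gives $\nabla P_{i-1}^- \geq D_{i-1,i}-\tfrac{M}{2}(x_i-x_{i-1})$, and again (\ref{symg}) yields $\nabla P_{i-1}^- \leq \nabla\gamma_{i-1}$; chaining these inequalities concludes (\ref{g2}).

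The argument for (\ref{g3}) is symmetric. Writing $\nabla P_i^r=\min\{D_{i,i+1},\; D_{i-1,i}+\tfrac{M}{2}(x_i-x_{i-1})\}$, the right half of (\ref{symg}) immediately gives $\nabla\gamma_i\leq D_{i,i+1}$, handling the first term. For the second, (\ref{T1}) at index $i$ gives $\nabla P_i^+ \leq D_{i-1,i}+\tfrac{M}{2}(x_i-x_{i-1})$, and (\ref{symg}) gives $\nabla\gamma_i\leq \nabla P_i^+$, so $\nabla\gamma_i\leq D_{i-1,i}+\tfrac{M}{2}(x_i-x_{i-1})$ as well. Taking the min of the two right-hand bounds yields $\nabla\gamma_i\leq \nabla P_i^r$.

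There is essentially no obstacle here beyond bookkeeping: the substantive work was done in Lemma \ref{lem:symg} (establishing (\ref{symg})) and in the monotonicity computations (\ref{T1})--(\ref{T2}). The only thing to double-check is the range of indices, namely that $i-1\geq 2$ and $i\leq N-1$ are needed so that the invocations of (\ref{symg}), (\ref{T1}), and (\ref{T2}) are legitimate; this is exactly why the statement is restricted to $i\in\{3,\dots,N-1\}$.
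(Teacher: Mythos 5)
Your proposal is correct and matches the paper's proof in substance: it uses exactly the same ingredients (inequality (\ref{symg}) together with the bounds (\ref{T1})--(\ref{T2})) to control both arguments of the max defining $\nabla P_{i-1}^\ell$ (resp.\ the min defining $\nabla P_i^r$). The only cosmetic difference is that the paper phrases the argument for (\ref{g2}) as a case split on which term achieves the max, while you observe directly that both arguments are individually dominated by $\nabla\gamma_{i-1}$, which is logically the same argument.
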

\begin{proof}
    By inequality \eqref{symg}, $\nabla \gamma_{i-1} \geq \max \{ \nabla P_{i-1}^-, D_{i-2,i-1} \} \geq D_{i-2,i-1}$. If $\nabla P_{i-1}^\ell = D_{i-2, i-1}$, then this implies $\nabla \gamma_{i-1} \geq \nabla P_{i-1}^\ell$. Else, by definition of $P_{i-1}^\ell$, we have $\nabla P_{i-1}^\ell = D_{i-1,i} - \frac{M}{2}(x_i - x_{i-1})$. Then 
    \[
    \nabla \gamma_{i-1} \geq \max \{ \nabla P_{i-1}^-, D_{i-2,i-1} \} \geq \nabla P_{i-1}^- \geq D_{i-1,i} - \frac{M}{2}(x_i - x_{i-1}) = \nabla P_{i-1}^\ell,
    \]
    where the last inequality is from (\ref{T2}). We have proved \eqref{g2}. The proof of \eqref{g3} follows analogously. 
\end{proof}

By applying Theorem \ref{azagra0}, the next lemma will be used to complete the proof of Theorem \ref{1dthmfin}.

\begin{lem}
    The Whitney field $(\gamma_i)_{i=1}^N \in Wh(E)$ defined in (\ref{choosegam}) satisfies $\gamma_i \sim_{2M} \gamma_j$ for all $i,j \in \{1, \dots, N\}$. \label{1drel}
\end{lem}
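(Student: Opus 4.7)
The plan is to apply the transitivity Lemma \ref{translm} to reduce to consecutive pairs $(\gamma_i,\gamma_{i+1})$, then verify compatibility separately for the two boundary pairs and the interior pairs. Once $\gamma_i \sim_{2M} \gamma_{i+1}$ is established for every $i \in \{1,\dots,N-1\}$, iterated application of Lemma \ref{translm} along the ordered base points $x_1 < \cdots < x_N$ extends the relation to every pair $\gamma_i \sim_{2M} \gamma_j$.

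For the boundary pairs $i=1$ and $i=N-1$, the definitions in (\ref{choosegam}) of $\nabla \gamma_1$ and $\nabla \gamma_N$ are precisely the backward/forward formulas of Lemma \ref{areflm}, taken with $\gamma_x = \gamma_2$, $y = x_1$ at the left endpoint and symmetrically at the right. To invoke Lemma \ref{areflm} at the left endpoint I must verify $0 \leq \nabla \gamma_2 - D_{1,2} \leq \tfrac{M}{2}(x_2 - x_1)$: the lower bound is immediate from (\ref{symg}), and the upper bound follows from the chain $\nabla \gamma_2 \leq \beta_2 \leq \nabla P_2^+ \leq D_{1,2} + \tfrac{M}{2}(x_2-x_1)$, where the final step uses (\ref{T1}). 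This yields $\gamma_1 \sim_M \gamma_2$, which is a fortiori $\sim_{2M}$; the symmetric argument using (\ref{T2}) gives $\gamma_{N-1} \sim_M \gamma_N$.

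For the interior pairs $i \in \{2,\dots,N-2\}$, set $\Delta = x_{i+1}-x_i$, $a = \nabla \gamma_{i+1} - D_{i,i+1}$, and $b = D_{i,i+1} - \nabla \gamma_i$; both $a,b \geq 0$ by (\ref{symg}) combined with the definition of $\nabla \gamma_i$. The relation $\gamma_i \sim_{2M} \gamma_{i+1}$ is then equivalent to the pair of quadratic inequalities $4M\Delta\, a \geq (a+b)^2$ and $4M\Delta\, b \geq (a+b)^2$. I will exploit the midpoint identities
\[
a = \tfrac12\bigl[(\alpha_{i+1}-D_{i,i+1}) + (\beta_{i+1}-D_{i,i+1})\bigr], \qquad b = \tfrac12\bigl[(D_{i,i+1}-\alpha_i) + (D_{i,i+1}-\beta_i)\bigr],
\]
where $\alpha_k := \max\{\nabla P_k^-, D_{k-1,k}\}$ and $\beta_k := \min\{\nabla P_k^+, D_{k,k+1}\}$ are as in (\ref{choosegam}), together with the range bounds $\beta_{i+1} \leq D_{i,i+1} + \tfrac{M\Delta}{2}$ and $\alpha_i \geq D_{i,i+1} - \tfrac{M\Delta}{2}$ furnished by (\ref{T1}) and (\ref{T2}), and verify both inequalities by splitting into cases according to whether the $\max$ in $\alpha_k$ and the $\min$ in $\beta_k$ are attained by a $\nabla P_k^{\pm}$ term or by an adjacent divided difference.

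I expect the interior case to be the main obstacle. The naive bounds $a,b \leq \tfrac{M\Delta}{2}$ alone are insufficient, since they do not prevent $\min(a,b)$ from being much smaller than $a+b$ and so violating the quadratic estimate. The factor of $2$ separating $\sim_{2M}$ from $\sim_M$ represents exactly the slack gained from the midpoint choice $\nabla \gamma_i = \tfrac12(\alpha_i + \beta_i)$: averaging the largest compatible lower bound with the smallest compatible upper bound yields the a priori lower bounds $a \geq \tfrac12(\beta_{i+1}-D_{i,i+1})$ and $b \geq \tfrac12(D_{i,i+1}-\alpha_i)$, and the inequalities (\ref{T1})--(\ref{T2}) tie $(a+b)^2$ to these gaps quadratically. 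Tracking this balance cleanly across the possible cases is where the computation lives.
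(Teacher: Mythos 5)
Your reduction framework is sound and largely mirrors the paper's: Lemma~\ref{translm} reduces the claim to consecutive pairs, the boundary pairs are handled via Lemma~\ref{areflm} using \eqref{R14}, and the interior condition $\gamma_i \sim_{2M} \gamma_{i+1}$ is correctly translated into the pair of quadratic inequalities $4M\Delta a \geq (a+b)^2$ and $4M\Delta b \geq (a+b)^2$ with $a = \nabla\gamma_{i+1}-D_{i,i+1}$, $b = D_{i,i+1}-\nabla\gamma_i$. The boundary verification is complete.

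The interior step, however, contains a genuine gap. Your stated ingredients --- the midpoint identities, the range bounds $\beta_{i+1} \le D_{i,i+1} + \tfrac{M\Delta}{2}$ and $\alpha_i \ge D_{i,i+1} - \tfrac{M\Delta}{2}$ from \eqref{T1}--\eqref{T2}, and a case-split on which term attains the max/min in $\alpha_k,\beta_k$ --- do not close the inequality. Writing $p := \alpha_{i+1}-D_{i,i+1}$, $q := \beta_{i+1}-D_{i,i+1}$, $r := D_{i,i+1}-\alpha_i$, $w := D_{i,i+1}-\beta_i$, the constraints you invoke give only $0 \le p \le q \le \tfrac{M\Delta}{2}$ and $0 \le w \le r \le \tfrac{M\Delta}{2}$ together with $a = \tfrac{p+q}{2}$, $b = \tfrac{r+w}{2}$. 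The target $8M\Delta(p+q) \ge (p+q+r+w)^2$ fails under these constraints alone (take $p=q=w=0$, $r=\tfrac{M\Delta}{2}$). Ruling out such configurations requires relations among $p,q,r,w$ coming from the \emph{definitions} of $P_i^{\pm}$ and $P_i^{\ell},P_i^{r}$ via \eqref{defp1}--\eqref{defp2} and \eqref{pil}--\eqref{pir} --- exactly the content your plan does not touch. Concretely, the paper's proof of \eqref{aa1} splits instead on whether $\nabla P_{i-1}^- \le \nabla P_{i-1}^{\ell}$, uses the inequalities \eqref{g2}--\eqref{g3}, the monotonicity Lemma~\ref{mono} applied to the explicit square-root formulas, and, in the harder case, an auxiliary jet $\tilde{\gamma}_i^{i-1}$ built by Lemma~\ref{areflm} together with the structural Lemma~\ref{lem:pilpir}. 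None of these appear in your plan for the interior step, and as the numeric example shows, they cannot be replaced by the bounds you propose. You have correctly identified where the difficulty lies (``tracking this balance cleanly across the possible cases''), but the proposal stops precisely at the hard part.
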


\begin{proof}
In light of Lemma \ref{translm}, we only need to prove $\gamma_i \sim_{2M} \gamma_{i-1}$ for $i \in \{2, \dots, N\}$. 

For $i \in \{2, \dots, N-1 \}$, inequalities \eqref{symg}, \eqref{T1}, and \eqref{T2} imply
\begin{align}
    &0 \leq \nabla \gamma_i - D_{i-1,i} \leq \nabla P_i^+ - D_{i-1,i} \leq \frac{M}{2}(x_i - x_{i-1}), \text{ and} \label{R14} \\
    &0 \leq D_{i,i+1}-\nabla \gamma_i \leq D_{i,i+1} - \nabla P_i^- \leq \frac{M}{2}(x_{i+1} - x_i), \label{R15}
\end{align}
Thus, we can apply Lemma \ref{areflm} with $\gamma_x: = \gamma_2$ and $y:= x_1$ to see  we have $\gamma_1 \sim_{M} \gamma_2$. Likewise, inequality (\ref{R15}) allows us to apply Lemma \ref{areflm} with $\gamma_x: = \gamma_{N-1}$ and $y:= x_N$ to see $\gamma_{N-1} \sim_M \gamma_N$. Therefore, it suffices to show $\gamma_i \sim_{2M} \gamma_{i-1}$ for $i \in \{3, \dots, N-1\}$. We prove this by demonstrating that 
\begin{align}
    &f(x_i) - f(x_{i-1}) - \langle \nabla \gamma_{i-1}, x_i- x_{i-1} \rangle \geq \frac{1}{4M}|\nabla \gamma_i - \nabla \gamma_{i-1}|^2 &&(i \in \{3,\dots, N-1\}), \label{aa1} \text{ and} \\
    &f(x_{i}) - f(x_{i+1}) - \langle \nabla \gamma_{i+1}, x_{i}- x_{i+1} \rangle \geq \frac{1}{4M}|\nabla \gamma_i - \nabla \gamma_{i+1}|^2 &&(i \in \{2,\dots, N-2\}). \label{aa2}
\end{align}
Indeed, the fact that (\ref{aa1}) holds, and (\ref{aa2}) holds with $i$ replaced by $(i-1)$, implies that $\gamma_i \sim_{2M} \gamma_{i-1}$ for $i \in \{3,\cdots,N-1\}$.

We will next establish inequality  (\ref{aa1}) by splitting into the two cases below.
Let $i \in \{3, \dots, N-1\}$. 

\underline{Case 1:} Suppose that $\nabla P_{i-1}^-  \leq \nabla P_{i-1}^\ell$. From the definition of $P_{i-1}^\ell$ in (\ref{pil}), we have $D_{i-2,i-1} \leq \nabla P_{i-1}^\ell$. Together, these inequalities imply
\begin{align*}
\nabla \gamma_{i-1} = \frac{1}{2} \big( \max \{\nabla P_{i-1}^-, D_{i-2,i-1} \}+\min \{ \nabla P_{i-1}^+,D_{i-1,i} \}\big)
&\leq \frac{1}{2} \big( \nabla P_{i-1}^\ell +\min \{ \nabla P_{i-1}^+,D_{i-1,i} \}\big) \\
&\leq \frac{1}{2} \big( \nabla P_{i-1}^\ell +D_{i-1,i} \big).
\end{align*}
With inequality (\ref{g2}), we've shown $\nabla \gamma_{i-1} \in \bigg[\nabla P_{i-1}^\ell, \frac{\nabla P_{i-1}^\ell+D_{i-1,i}}{2}\bigg]$. In particular, $\nabla P_{i-1}^\ell \leq D_{i-1,i}$. In combination with (\ref{R15}), 
$$
0 \leq D_{i-1,i}- \frac{\nabla P_{i-1}^\ell+D_{i-1,i}}{2}\leq D_{i-1,i}-\nabla \gamma_{i-1} 
\leq \frac{M}{2}(x_i - x_{i-1}),
$$ 
we can apply Lemma \ref{mono} with $t_1: = D_{i-1,i}- \frac{\nabla P_{i-1}^\ell+D_{i-1,i}}{2}$, $t_2:= D_{i-1,i}-\nabla \gamma_{i-1} $, and $\omega:=M(x_i - x_{i-1})$; the map $h$ is increasing, and in particular $ h(t_2) \geq h(t_1)$, so
\begin{align}
    \nabla \gamma_{i-1} + &\sqrt{4M(f(x_i) - f(x_{i-1}) - \langle \nabla \gamma_{i-1}, x_i- x_{i-1} \rangle)} \nonumber \\
    &=\nabla \gamma_{i-1} +\sqrt{4M\langle D_{i-1,i}-\nabla \gamma_{i-1}, x_i- x_{i-1} \rangle} \nonumber\\
    &\geq  \frac{\nabla P_{i-1}^\ell+D_{i-1,i}}{2}+ \sqrt{4M \left\langle D_{i-1,i} - \frac{\nabla P_{i-1}^\ell+D_{i-1,i}}{2}, x_i- x_{i-1} \right\rangle} \nonumber \\
    &\geq \nabla P_{i-1}^\ell  + \sqrt{2M \langle D_{i-1,i} - \nabla P_{i-1}^\ell , x_i- x_{i-1} \rangle} = \nabla P_i^+, \label{g1}
\end{align}
where the last inequality follows because $D_{i-1,i} \geq \nabla P_{i-1}^\ell$. From inequality \eqref{symg} and Corollary \ref{cor:grad}, we have $\nabla P_i^+ \geq \nabla \gamma_i \geq \nabla \gamma_{i-1}$; we first use these inequalities and then  \eqref{g1} to bound $\frac{1}{4M}|\nabla \gamma_{i-1}-\nabla \gamma_i|^2  \leq \frac{1}{4M}|\nabla \gamma_{i-1}-\nabla P_i^+|^2 \leq f(x_i) - f(x_{i-1}) - \langle \nabla \gamma_{i-1}, x_i- x_{i-1} \rangle$, which is (\ref{aa1}).

\underline{Case 2:} Suppose $\nabla P_{i-1}^\ell < \nabla P_{i-1}^-$;  thus, by the definition of $P_{i-1}^\ell$ in \eqref{pil},  $D_{i-2,i-1} \leq \nabla P_{i-1}^\ell < \nabla P_{i-1}^-$. Therefore,
\begin{align*}
\nabla \gamma_{i-1} = \frac{1}{2} \big( \max \{\nabla P_{i-1}^-, D_{i-2,i-1} \}+\min \{ \nabla P_{i-1}^+,D_{i-1,i} \}\big)
&= \frac{1}{2} \big( \nabla P_{i-1}^- +\min \{ \nabla P_{i-1}^+,D_{i-1,i} \}\big) \\
&\leq \frac{1}{2} \big( \nabla P_{i-1}^- +D_{i-1,i} \big).
\end{align*}
From inequality \eqref{symg}, $\nabla \gamma_{i-1} \geq \max \{ \nabla P_{i-1}^-, D_{i-2,i-1} \} \geq \nabla P_{i-1}^-$. We have shown $\nabla \gamma_{i-1} \in \bigg[ \nabla P_{i-1}^-, \frac{\nabla P_{i-1}^- + D_{i-1,i}}{2}\bigg]$. In particular, $\nabla P_{i-1}^- \leq D_{i-1,i}$. In combination with (\ref{R15}), 
$$
0 \leq D_{i-1,i} - \frac{\nabla P_{i-1}^- + D_{i-1,i}}{2} \leq D_{i-1,i}-\nabla \gamma_{i-1} \leq D_{i-1,i}- \nabla P_{i-1}^- 
\leq \frac{M}{2}(x_{i} - x_{i-1}).
$$ 
Thus, we can apply Lemma \ref{mono} with
$t_1: = D_{i-1,i} - \frac{\nabla P_{i-1}^- + D_{i-1,i}}{2}  $, $t_2: =D_{i-1,i}-\nabla \gamma_{i-1} $, and $\omega:=M(x_{i} - x_{i-1})$ to see $h(t_2) \geq h(t_1)$, so that
\begin{align}
    \nabla \gamma_{i-1} + &\sqrt{4M \langle D_{i-1,i}-\nabla \gamma_{i-1},x_i - x_{i-1}\rangle}\nonumber \\
    &\geq \frac{\nabla P_{i-1}^-+D_{i-1,i}}{2} + \sqrt{4M\left\langle D_{i-1,i}- \frac{\nabla P_{i-1}^-+D_{i-1,i}}{2},x_i - x_{i-1}\right\rangle} \nonumber \\
    &\geq \nabla P_{i-1}^- + \sqrt{2M \left\langle D_{i-1,i}- \nabla P_{i-1}^-,x_i - x_{i-1}\right\rangle}. \label{e2}
\end{align}

According to (\ref{R15}), $0 \leq  D_{i-1,i} - \nabla P_{i-1}^- \leq \frac{M}{2}(x_{i} - x_{i-1})$. This verifies the hypothesis of Lemma \ref{areflm} for $\gamma_x:=P_{i-1}^-$ and $y:=x_i$. Thus, there exists $\tilde{\gamma}_i^{i-1} \in \Gamma^0(x_i)$ satisfying 
\[
\nabla \tilde{\gamma}_i^{i-1} = \nabla P_{i-1}^- + \sqrt{2M \left\langle D_{i-1,i}- \nabla P_{i-1}^-,x_i - x_{i-1}\right\rangle}
\]
and $\tilde{\gamma}_i^{i-1} \sim_M P_{i-1}^-$. In light of Lemma \ref{lem:pilpir}, we must have $\nabla \tilde{\gamma}_i^{i-1} \geq \nabla P_i^r$. Hence, continuing from \eqref{e2},
\begin{align}
    \nabla \gamma_{i-1} + \sqrt{4M \langle D_{i-1,i}-\nabla \gamma_{i-1},x_i - x_{i-1}\rangle} &\geq \nabla P_{i-1}^- + \sqrt{2M \left\langle D_{i-1,i}- \nabla P_{i-1}^-,x_i - x_{i-1}\right\rangle} \nonumber \\
    &=\nabla \tilde{\gamma}_i^{i-1}\geq \nabla P_i^r \geq \nabla \gamma_i, \label{f1}
\end{align}
where the last inequality follows from \eqref{g3}.
This is equivalent to (\ref{aa1}). We have exhausted all cases proving (\ref{aa1}). 

The proof of inequality (\ref{aa2}) follows analogously because of the symmetry of our choice of $\nabla \gamma_i :=\frac{1}{2} \big( \max \{\nabla P_i^-, D_{i-1,i} \}+\min \{ \nabla P_i^+,D_{i,i+1} \}\big)$ for $i \in \{ 2, \dots,N-1\}$ in light of Lemma \ref{lem:symg}. We summarize the proof briefly. Recall inequality \eqref{aa2} is
\begin{align*}
    f(x_{i}) - f(x_{i+1}) - \langle \nabla \gamma_{i+1}, x_{i}- x_{i+1} \rangle &=\langle \nabla \gamma_{i+1}-D_{i,i+1}, x_{i+1}- x_{i} \rangle\\ &\geq \frac{1}{4M}|\nabla \gamma_i - \nabla \gamma_{i+1}|^2 \quad (i \in \{2,\dots, N-2\}).
\end{align*}
First, we suppose $\nabla P_{i+1}^+ \geq \nabla P_{i+1}^r$; then $\nabla \gamma_{i+1} \in \left[  \frac{\nabla P_{i+1}^r + D_{i,i+1}}{2}, \nabla P_{i+1}^r\right]$. We use Lemma \ref{mono} and inequality \eqref{symg} to see (\ref{aa2}) holds under the assumption $\nabla P_{i+1}^+ \geq \nabla P_{i+1}^r$. Second, we suppose $\nabla P_{i+1}^+ < \nabla P_{i+1}^r$ and, therefore, $\nabla \gamma_{i+1} \in \bigg[ \frac{\nabla P_{i+1}^+ + D_{i,i+1}}{2}, \nabla P_{i+1}^+\bigg]$. We use Lemmas \ref{areflm}-\ref{lem:pilpir} and inequality \eqref{symg} to see (\ref{aa2}) holds, exhausting all cases and completing the proof of (\ref{aa2}).

This completes the proof of Lemma \ref{1drel}.

\end{proof}

By applying Theorem \ref{azagra0} to $(\gamma_i)_{i =1}^N \in Wh(E)$ and using that $\gamma_i(x_i) = f(x_i)$, we complete the proof of Theorem \ref{1dthmfin}.
\end{proof}

\subsection{Proof of Theorem \ref{1dthmsc}} \label{sec:fin1d}

Theorem \ref{1dthmsc} is an immediate consequence of the following theorem:

\begin{THM} \label{1dthm}
Let $E \subset \R$ be compact, the function $f: E \to \R$, and $M>0$. Suppose for every $S \subset E$ satisfying $\#S \leq {k_1^\#}=5$, there exists a convex function $F^S \in C^{1,1}_c(\R)$ satisfying $F^S|_S=f|_S$ and $\Lip(\nabla F^S) \leq M$. Then there exists a convex function $F \in C^{1,1}_c(\R)$ satisfying $F|_E = f|_E$ and $\Lip (\nabla F) \leq 2M$.
\end{THM}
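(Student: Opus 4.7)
The plan is to deduce Theorem \ref{1dthm} from Theorem \ref{1dthmfin} (the finite case) by exactly the Arzel\`a--Ascoli compactness argument employed in the proof of Theorem \ref{scthm}, and then to pass from the resulting $C^{1,1}_c$-function on a large ball to a global function in $C^{1,1}_c(\R)$ via the one-jet extension machinery (Lemma \ref{neclem} together with Theorem \ref{azagra0}).

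Concretely, fix $R>0$ with $E \subset B(0,R)$, choose a countable dense subset $E' \subset E$, and write $E'$ as an increasing union $\bigcup_{i \in \N} E_i$ of finite sets. The finiteness hypothesis of Theorem \ref{1dthm} restricts to each $E_i$, so Theorem \ref{1dthmfin} produces a convex function $F_i \in C^{1,1}_c(\R)$ with $F_i|_{E_i}=f|_{E_i}$ and $\Lip(\nabla F_i) \leq 2M$. Assuming $\#E \geq 2$ (else the result is trivial), fix two points $a,b \in E_1$ with $a<b$; convexity of $F_i$ together with $F_i(a)=f(a)$ and $F_i(b)=f(b)$ forces $\nabla F_i$ to attain the value $D^f_{ab}$ somewhere in $[a,b]$, and combining this with $\Lip(\nabla F_i) \leq 2M$ yields a bound on $|\nabla F_i|$ on $B(0,2R)$ that is independent of $i$, and hence a uniform bound on $|F_i|$ as well. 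Thus $(F_i|_{B(0,2R)})$ lies in a bounded subset of the closed, equicontinuous family $\mathcal{B}(A) = \{G \in C^{1,1}_c(B(0,2R)) : G \text{ convex}, \|G\|_{C^{1,1}(B(0,2R))} \leq A\}$ for $A$ large enough, so by Arzel\`a--Ascoli I extract a subsequence $F_{i_j} \to \bar F$ in $C^1(B(0,2R))$. The limit $\bar F$ is convex on $B(0,2R)$, $\Lip(\nabla \bar F; B(0,2R)) \leq 2M$, and $\bar F|_{E'}=f|_{E'}$ (since every $y \in E'$ lies in $E_i$ for all large $i$); continuity and density promote this to $\bar F|_E = f|_E$.

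To obtain the required global extension, apply Lemma \ref{neclem} to $\bar F \in C^{1,1}_c(B(0,2R))$ to conclude $J_x\bar F \sim_{2M} J_y\bar F$ for all $x,y \in \overline{B(0,R)}$. Feeding the Whitney field $(J_x \bar F)_{x \in \overline{B(0,R)}}$ into Theorem \ref{azagra0} yields a convex function $F \in C^{1,1}_c(\R)$ with $J_xF = J_x\bar F$ for all $x \in \overline{B(0,R)}$ and $\Lip(\nabla F) \leq 2M$. Since $E \subset B(0,R)$, we have $F|_E = \bar F|_E = f|_E$, completing the proof.

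The only real subtlety is securing the uniform $C^{1,1}(B(0,2R))$ bound on $(F_i)$ before invoking Arzel\`a--Ascoli; once two interpolation points of $f$ are pinned down, this follows immediately from convexity and the uniform Lipschitz bound on the gradients. The globalization step is a verbatim reuse of the closing device from the proof of Theorem \ref{scthm}, so it introduces no new difficulties. Finally, Theorem \ref{1dthmsc} is recovered from Theorem \ref{1dthm} by subtracting $\frac{\eta}{2}|x|^2$: the resulting function satisfies the hypotheses of Theorem \ref{1dthm} with parameter $M+\eta$, and adding $\frac{\eta}{2}|x|^2$ back produces an $\eta$-strongly convex extension with $\Lip(\nabla F) \leq 2(M+\eta)+\eta \leq 5M$, as claimed.
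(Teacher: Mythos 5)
Your proof matches the paper's argument step for step: the Arzel\`a--Ascoli reduction from finite $E_i$ to a compact family $\mathcal{B}(A)$, followed by the globalization via the $1$-jet estimate (Lemma \ref{neclem}) and Theorem \ref{azagra0}. You additionally spell out why $(F_i)$ lies in some $\mathcal{B}(A)$ --- pinning down two interpolation points in $E_1$ to obtain a uniform $C^1(B(0,2R))$ bound --- a step the paper asserts without justification; this is a welcome clarification but does not change the approach.
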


To see Theorem \ref{1dthmsc} follows, we assume the hypotheses of Theorem \ref{1dthmsc}: Let $E \subset \R$ be compact, and let the function $f: E \to \R$. Suppose for every $S \subset E$ satisfying $\#S \leq {k_1^\#}=5$, there exists an $\eta$-strongly convex function $F^S \in C^{1,1}_c(\R)$ satisfying $F^S|_S=f|_S$ and $\Lip(\nabla F^S) \leq M$. Let $g: E \to \R$ be $g(x):= \frac{1}{1+\eta/M} (f(x) - \frac{\eta}{2}|x|^2)$, and for $S \subset E$ satisfying $\#S \leq {k_1^\#}=5$, let $G^S: \R \to \R$ be $G^S(x):= \frac{1}{1+\eta/M}(F^S(x) - \frac{\eta}{2} |x|^2)$. The function $G^S$ satisfies $G^S$ is convex, $G^S|_S = g|_S$, and $\Lip(\nabla G^S) \leq \frac{1}{1+\eta/M}(\Lip(\nabla F^S) + \eta) \leq M$. Thus, $g:E \to \R$ satisfies the hypotheses of Theorem \ref{1dthm}. Applying this theorem, we deduce there exists a convex function $G \in C^{1,1}_c(\R)$ satisfying $G|_E = g|_E$ and $\Lip (\nabla G) \leq 2M$. Let the function $F: \R \to \R$ be $F(x):= (1+\eta/M)G(x) + \frac{\eta}{2}|x|^2$. Then $F \in C^{1,1}_c(\R)$ is $\eta$-strongly convex and satisfies $\Lip(\nabla F) \leq 2M+3\eta$ and $F|_E=f|_E$. The conclusion of Theorem \ref{1dthmsc} follows.

Thus, our remaining task is to prove Theorem \ref{1dthm}.

\begin{proof}[Proof of Theorem \ref{1dthm}]

Let $E \subset \R$ be compact. There exists $R>1$ such that $E \subset B(0,R)$. For $A>0$, let $\mathcal{B}(A) \subset C^1(B(0,2R))$ be
\begin{align*}
    \mathcal{B}(A) = \{ F \in C^{1,1}_c(B(0,2R)): F \text{ is convex, and } \|F\|_{C^{1,1}(B(0,2R))} \leq A \}.
\end{align*}
The set $\mathcal{B}(A)$ is closed in the $C^1(B(0,2R))$-topology. For any $A>0$, $\mathcal{B}(A)$ is also bounded and equicontinuous in the $C^1(B(0,2R))$-topology,  implying by the Arzel\`a-Ascoli Theorem that $\mathcal{B}(A)$ is compact. 

Let $E'$ be a countable dense subset of $E$, and let $(E_i)_{i \in \N}$ be an increasing sequence of sets satisfying for $i \in \N$, $E_i \subset E'$,  $\# E_i < \infty$, and $\bigcup_{i \in \N} E_i = E'$. By assumption, for all $S \subset E_i \subset E$ satisfying $\#S \leq 5$, there exists an $\eta$-strongly convex function $F^S \in C^{1,1}_c(\R)$ satisfying $F^S|_S=f|_S$ and $\Lip(\nabla F^S) \leq M$.  We apply Theorem \ref{1dthmfin} to produce a convex function $F_i \in C^{1,1}_c(\R^n)$ satisfying $F_i|_{E_i}=f|_{E_i}$, and $\Lip (\nabla F_i) \leq 2M$. Restricting the domain of $F_i$ to $B(0,2R)$, we see for $A$ large enough, $F_i \in \mathcal{B}(A)$ for all $i \in \N$. By the compactness of $\mathcal{B}(A)$, there exists a convergent subsequence $(F_{i_k})_{k \in \N} \to \bar{F} \in \mathcal{B}(A)$. The limiting function $\bar{F}$ satisfies $\Lip(\nabla \bar{F};B(0,2R)) \leq 2M$ and $\bar{F}|_{E'}=f|_{E'}$, and because this convergence is uniform $\bar{F}|_E=f|_E$. Because $\bar{F} \in \mathcal{B}(A)$, $\bar{F} \in C^{1,1}_c(B(0,2R))$ is convex on $B(0,2R)$.

By Corollary \ref{neccor}, $J_x\bar{F} \sim_{\Lip(\nabla \bar{F};B(0,2R))} J_y\bar{F}$ for all $x,y \in \overline{B(0,R)}$. We apply Theorem \ref{azagra0} to $(J_x \bar{F})_{x \in \overline{B(0,R)}}$ to produce a convex function $F \in C^{1,1}_c(\R^n)$ satisfying $J_xF = J_x \bar{F}$ for all $x \in \overline{B(0,R)}$
(implying $F|_E=f|_E$) and $\Lip(\nabla F) \leq 2M$, completing the proof of Theorem \ref{1dthm}.

\end{proof}

\newpage
\bibliographystyle{amsplain}
\bibliography{main}

\end{document}